\newtheorem{Property}             {{\textbf Property}}[section]
\newtheorem{Lemma}[Property]      {\textbf Lemma}
\newtheorem{Theorem}[Property]    {\textbf Theorem}
\newtheorem{Proposition}[Property]{\textbf Proposition}
\newtheorem{Remark}[Property]     {\textbf Remark}
\newtheorem{Corollary}[Property]  {\textbf Corollary}
\newtheorem{Example}[Property]    {\textbf Example}
\long\def\symbolfootnote[#1]#2{\begingroup%
\def\thefootnote{\fnsymbol{footnote}}\footnote[#1]{#2}\endgroup}
\title{Invariants of plane curve singularities and Newton diagrams}
\author{by Pierrette Cassou-Nogu\`{e}s and  Arkadiusz P\l{}oski}
\date{}
\begin{document}

\maketitle

\begin{center}
To Professor Kamil Rusek on his $65^{\textrm{th}}$ birthday
\symbolfootnote[0]{\emph{$2000$ Mathematics Subject Classification:} Primary 32S55; Secondary 14H20.}
\symbolfootnote[0]{\emph{Key words and phrases:} plane local curve, intersection multiplicity, Milnor number,}
\symbolfootnote[0]{Newton's transformation, Newton diagram, equisingularity.}
\end{center}
\bigskip

\begin{abstract}
We present an intersection-theoretical approach to the invariants of plane curve singularities
$\mu$, $\delta$, $r$ related by the Milnor formula $2\delta=\mu+r-1$.
Using Newton transformations we give formulae for  $\mu$, $\delta$, $r$ which imply planar versions
of well-known theorems on nondegenerate singularities.
\end{abstract}

\section*{0 \ \ Introduction}
\label{sec:0.Introduction}

The goal of this paper is to present an elementary, 
intersection-theore\-ti\-cal approach to the local invariants of plane curve singularities.
We study in detail three invariants:
the Milnor number $\mu$, the number of double points $\delta$ and the number $r$ of branches of a local plane curve.
The technique of Newton diagrams plays an important part in the paper.
It is well-known that Newton transformations which arise in a natural way
when applying the Newton algorithm provide a useful tool for calculating invariants of singularities.

The formulae for the Milnor number in terms of  Newton diagrams and Newton transformations
presented in the paper grew out of our discussion on  Eisenbud-Neumann diagrams. 
They have counterparts in toric geometry of plane curve singularities and imply 
in the case of two dimensions theorems due to Kouchnirenko, Bernstein and Khovanski.\\

The contents of the article are:
\begin{enumerate}
\item {Plane local curves}
\item {The Milnor number: intersection theoretical approach}
\item {Newton diagrams and power series}
\item {Newton transformations and factorization of power series}
\item {Newton transformations, intersection multiplicity and the Milnor number}
\item {Nondegenerate singularities and equisingularity}\\
\end{enumerate}

\section{Plane local curves}
\label{s1}

Let $\mathbbm{C}\{X,Y\}$ be the ring of convergent complex power series in variables $X$, $Y$.
For any nonzero power series 
$f = \sum c_{\alpha\beta} X^{\alpha} Y^{\beta}$ we put
 $\mbox{supp}f = \{(\alpha, \beta) \in \mathbbm{N}^2: c_{\alpha\beta} \ne 0\}$,
 $\mbox{ord}f = \inf\{\alpha + \beta: (\alpha, \beta) \in \mbox{supp}f \}$ and 
$\mbox{in}f = \sum c_{\alpha\beta} X^\alpha Y^\beta$ with summation over 
$(\alpha, \beta) \in \mathbbm{N}^2$ such that $\alpha + \beta =\mbox{ord}f $.

We put by convention $\mbox{ord}0  = +\infty$, $\mbox{in}0=0$. 
We call $c_{00}$ the constant term of the 
power series $f$. 
The power series without constant term form the unique maximal ideal of 
$\mathbbm{C} \{X,Y\}$. A power series is a unit if and only if its constant term is nonzero. We write $g=f\cdot\mbox{unit}$  if there is a unit $u$ such that $g=fu$ in $\mathbbm{C} \{X,Y\}$. We say also that $f$ and $g$ are associated.
Let $f \in \mathbbm{C}\{X,Y\}$ be a nonzero power series without constant term. A \emph{local (plane) curve} 
$f=0$ is defined to be the ideal generated by $f$ in $\mathbbm{C}\{X,Y\}$. We say that a local curve $f=0$ 
is \emph{irreducible (reduced)} if $f \in \mathbbm{C}\{X,Y\}$ is irreducible ($f$ has no multiple factors). The irreducible curves are also called \emph{branches}. If $f=f_{1}^{m_1} \ldots f_{r}^{m_r}$ with non-associated irreducible factors $f_i$ then we refer to $f_i=0$ as the branches or components of $f=0$. 
We say that a curve $f=0$ is \emph{singular (nonsingular)} if $\mbox{ord}f>1$ ($\mbox{ord} f=1$). We call $\mbox{ord}f$ the \emph{multiplicity of the curve $f=0$}. The lines defined by the equation $\mbox{in}f=0$ are the \emph{tangent lines} (in short: tangents) to the curve $f=0$. 

Let $\widetilde{X}, \widetilde{Y}$ be new variables. 
A local system of coordinates $\Phi$ is a pair of power series 
$\Phi(\widetilde{X},\widetilde{Y}) = (a\widetilde{X} + b\widetilde{X} + \cdots, c\widetilde{X}+d\widetilde{Y}+\cdots) $ where $ad-bc\ne 0$  and the dots denote terms of order higher than $1$ in $\widetilde{X}, \widetilde{Y}$. The map $f \rightarrow f \circ \Phi$ is an isomorphism of the rings $\mathbbm{C} \{X,Y\}$ and $\mathbbm{C} \{\widetilde{X},\widetilde{Y}\}$.

For any power series $f,g \in \mathbbm{C} \{X,Y\}$ we define the \emph{intersection multiplicity} or \emph{intersection number} $i_0(f,g)$ by putting
$$i_0 (f,g) = \dim_\mathbbm{C} \mathbbm{C} \{X,Y\}/(f,g)$$ 
where $(f,g)$ is the ideal of $\mathbbm{C} \{X,Y\}$ generated by $f$ and $g$. If $f,g$ are nonzero power series without constant term then $i_0(f,g) < +\infty$ if and only if the curves $f=0$ and $g=0$ have no common branch. The following properties are basic 
\begin{enumerate}
\item $i_0(f,g)$ depends only on the ideal $(f,g)$. In particular
$i_0 (f,g) = i_0 (g,f)$ and $i_0 (f, g+kf)=i_0(f,g)$.
\item If $\Phi$ is a local system of coodinates then $i_0 (f\circ \Phi, g\circ \Phi) = i_0 (f,g)$.
\item $i_0 (f,gh) = i_\circ (f,g) + i_\circ (f,h)$.
\end{enumerate}

Let $t$ be a variable. A \emph{parametrization} is a pair $(x(t),y(t)) \in \mathbbm{C} \{t\}^2$ of power series without constant term such that $x(t) \ne 0$ or $y(t) \ne 0$ in $\mathbbm{C} \{t\}$. Two parametrizations $(x(t),y(t))$ and $(\widetilde{x}(\widetilde{t}),\widetilde{y}(\widetilde{t}))$  are equivalent if there is a power series $\tau(t)\in\mathbb{C}(t)$, $\mbox{ord}\tau=1$ such that $x(t)=\widetilde{x}(\tau(t))$, $y(t)=\widetilde{y}(\tau(t))$.
A \emph{parametrization} $(x(t),y(t)) \in \mathbbm{C}\{t\}^2$ \emph{is good} if there is no parametrization 
$(x_1(t_1),y_1(t_1))\in \mathbbm{C} \{t_1\}^2$ such that $x(t) = x_1 (\tau_1(t))$, $y(t) = y_1(\tau_1(t))$ for
a power series $\tau_1(t)$ such that $\mbox{ord}\tau_1(t) > 1$. 

A parametrization $(x(t),y(t))$ is a \emph{Puiseux' parametrization} if it is good and $x(t) = t^n$ for
an integer $n>0$. One checks that a parametrization $(t^n,y(t))$ is a Puiseux' parametrization 
if and only if $\mbox{gcd}(n,\mbox{supp}y(t))=1$.

For any branch $f=0$ there is a unique up to equivalence good parametrization $(x(t),y(t))$ such that $f(x(t),y(t))=0$. If $n=i_0(f,x)<+\infty$ then it is equivalent to a Puiseux' parametrization $(t^n, y(t))$.
On the other hand for any parametrization $(x(t),y(t))$ there is a unique branch $f=0$ such that $f(x(t),y(t))=0$.

 We have the following important property
\begin{enumerate}
\item [4.] If $(x(t),y(t))$ is a good parametrization of the branch $f=0$ then $i_0(f,g) =\mbox{ord}g(x(t),y(t))$.
\end{enumerate}
which implies
\begin{enumerate}
\item [5.] Let $f=0$ be a branch. Then for any power series $g,h \in \mathbbm{C} \{X,Y\}$:\\
$i_0 (f,g+h) \geq \inf\{i_0(f,g), i_0(f,h)\}$ with equality if $i_0(f,g) \ne i_0(f,h)$. 
\end{enumerate}
Suppose that $f=0$ is a branch and consider 
$$
\Gamma(f) = \{i_0 (f,g): g \in \mathbbm{C} \{X,Y\} \mbox{ runs over all series such that $f$ does not divide $g$}\}.
$$
Clearly $0\in \Gamma(f)$ and $a,b \in \Gamma(f) \Rightarrow a+b \in \Gamma(f)$ since the intersection number is additive. We call $\Gamma(f)$ the semigroup of the branch $f=0$. Note that $\Gamma(f)=\mathbbm{N}$ if and only if the branch $f=0$ is nonsingular.

Two reduced curves $f=0$ and $g=0$ are \emph{equisingular} if and only if there are factorizations $f=f_1\cdots f_r$ and $g=g_1\cdots g_r$ with the same numbers $r>0$ of irreducible factors $f_i$ and $g_i$ such that
\begin{itemize}
\item $\Gamma(f_i) = \Gamma(g_i)$ for all $i=1,\ldots,r$,
\item $i_0(f_i,f_j) = i_0 (g_i,g_j)$ for $i,j = 1,\ldots,r$.
\end{itemize}
The bijection $f_i\mapsto g_i$ will be called equisingularity bijection.
In particular two branches are equisingular if and only if they have the same semigroup. A function defined on the set of reduced curves is an \emph{invariant} if it is constant on equisingular curves. The multiplicity and the number of branches of a plane local curve are invariants.\\

\noindent
\textbf{Notes}\\
The proofs omitted in this section are given in \cite{GrLoSh06}. 
A beautiful introduction to the subject is given in \cite{Te91}. 
The book \cite{BriKn86} is very accesible and contains historical information. 
For the systematic treatment of plane curve singularities see \cite{Ca00}, \cite{JoPf00}, Chapter $5$, 
and \cite{Wall04}.\\

\section{The Milnor number: intersection theoretical approach}
\label{s2}

For every power series $f\in \mathbbm{C} \{X,Y\}$ without constant term we define the Milnor number $\mu_0(f)$ by
putting
$$\mu_0(f) = i_0\bigg(\frac{\partial f}{\partial X}, \frac{\partial f}{\partial Y}\bigg).$$

\begin{Property}
\label{wl1}
We have $\mu_0 (f) = + \infty$ if and only if $f$ has a multiple factor in $\mathbbm{C} \{X,Y\}$.
\end{Property}
\smallskip

\begin{proof}
If $f=h^2g$ in $\mathbbm{C} \{X,Y\}$, $\mbox{ord}h > 0$ then 
$\frac{\partial f}{\partial X} = 2h\frac{\partial h}{\partial X}g + h^2\frac{\partial g}{\partial X}$ and\linebreak  
$\frac{\partial f}{\partial Y} = 2h\frac{\partial h}{\partial Y}g + h^2\frac{\partial g}{\partial Y}$. 
Thus the derivatives $\frac{\partial f}{\partial X}, \frac{\partial f}{\partial Y}$ have a common factor $h$ of positive order and 
$\mu_0 (f) = i_0 \Big(\frac{\partial f}{\partial X}, \frac{\partial f}{\partial Y} \Big) = +\infty$. 
Now suppose that $i_0 \Big(\frac{\partial f}{\partial X}, \frac{\partial f}{\partial Y} \Big) = +\infty$. 
Then there exists an irreducible divisor $h$ of the derivatives $\frac{\partial f}{\partial X}, \frac{\partial f}{\partial Y}$. 
We claim that $h$ divides $f$: if $(x(t),y(t))$ is a parametrization of the branch $h=0$ then 
$\frac{d}{dt} f(x(t),y(t)) = \frac{\partial f}{\partial X} (x(t),y(t)) x'(t) + \frac{\partial f}{\partial Y}(x(t),y(t)) y'(t) = 0$ 
in $\mathbbm{C}\{t\}$. 
Therefore $f(x(t),y(t))=0$ and $h$ divides $f$.
From irreducibility of $h$ it follows that $\mbox{ord} h(X,0)=\mbox{ord} h$ or $\mbox{ord} h (0,Y) = \mbox{ord}h$.
Suppose that $\mbox{ord} h(0,Y) = \mbox{ord} h$. 
Thus $\mbox{ord} \frac{\partial h}{\partial Y} = \mbox{ord} h - 1$ and the power series $h$ and $\frac{\partial h}{\partial Y}$ are coprime in $\mathbbm{C}\{X,Y\}$. 
Write $f=hg$. Whence
$\frac{\partial f}{\partial Y} = \frac{\partial h}{\partial Y} g + h \frac{\partial g}{\partial Y}$ and $h$ divides 
$\frac{\partial h}{\partial Y}g$. 
Therefore $h$ divides $g$ and $h$ is a multiple factor of $f$.\\
\end{proof}

\begin{Property}
For any local system of coordinates $\Phi: \mu_0 (f \circ \Phi) = \mu_0 (f)$. 
\end{Property}
\smallskip

\begin{proof}
Since $\mbox{Jac}\Phi(0,0)\ne 0$ the ideals 
$\Big(\frac{\partial}{\partial \widetilde{X}} (f \circ \Phi), \frac{\partial}{\partial \widetilde{Y}} (f \circ \Phi)\Big)$ and\\
$\Big(\frac{\partial f}{\partial X} \circ \Phi, \frac{\partial f}{\partial Y} \circ \Phi\Big)$
are equal. 
Thus we get $\mu_0 (f\circ \Phi) = i_0 \Big(\frac{\partial f}{\partial X} \circ \Phi, \frac{\partial f}{\partial Y} \circ \Phi \Big) =$\\
$=i_0 \Big(\frac{\partial f}{\partial X}, \frac{\partial f}{\partial Y} \Big) =\mu_0 (f)$.
\end{proof}
\smallskip

The following lemma, due to Teissier (\cite{Te73}, Chap. II, Th\'{e}or\`{e}me 5; \cite{Te73$^{bis}$}, Chap. II, Prop. 1.2) is basic for us. 
It is a particular case of a formula proved in \cite{Te73} in the case of hypersurfaces.\\

\begin{Lemma}[Teissier's lemma]
 Let $f\in\mathbb{C}\{X,Y\}$, $f(0,0)=0$ be such that $f(0,Y)\ne 0$. Then we have
$$i_0 \left(f,\frac{\partial f}{\partial Y}\right)=\mu_0(f)+i_0(f,X)-1.$$
\end{Lemma}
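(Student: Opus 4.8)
The plan is to transfer the computation onto the branches of the polar curve $\partial f/\partial Y=0$, where $\partial f/\partial Y$ vanishes identically, and to combine the chain rule with property~(4) of Section~1. First I would dispose of the degenerate case: if $f$ has a multiple factor, then $\mu_0(f)=+\infty$ by Property~\ref{wl1}, and writing $f=h^2g$ shows that $h$ divides both $f$ and $\partial f/\partial Y$, so these two curves share a branch and $i_0(f,\partial f/\partial Y)=+\infty$ as well; since $f(0,Y)\ne0$ forces $i_0(f,X)=\operatorname{ord}_Y f(0,Y)<+\infty$, both sides of the asserted formula are then $+\infty$. Hence I may assume $f$ reduced, so that $\partial f/\partial X$ and $\partial f/\partial Y$ are coprime, again by Property~\ref{wl1}. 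I would also record two consequences of $f(0,Y)\ne0$: that $i_0(f,X)=\operatorname{ord}_Y f(0,Y)=:N$, and that $X\nmid\partial f/\partial Y$ (otherwise $\frac{\partial f}{\partial Y}(0,Y)\equiv0$, forcing $f(0,Y)$ to be a nonzero constant, which is excluded).

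The heart of the argument is a per-branch identity. I would factor $\partial f/\partial Y=\prod_j g_j^{m_j}$ into distinct branches and pick a good parametrization $(x_j(t),y_j(t))$ of each $g_j=0$; by the remark above $x_j(t)\ne0$. Putting $h_j(t)=f(x_j(t),y_j(t))$ and differentiating, the chain rule gives
$$h_j'(t)=\frac{\partial f}{\partial X}(x_j,y_j)\,x_j'(t)+\frac{\partial f}{\partial Y}(x_j,y_j)\,y_j'(t)=\frac{\partial f}{\partial X}(x_j,y_j)\,x_j'(t),$$
since $g_j\mid\partial f/\partial Y$ annihilates the second term. Because $f$ is reduced no $g_j$ divides $f$, and by coprimality of the derivatives no $g_j$ divides $\partial f/\partial X$; thus $h_j$ and $(\partial f/\partial X)(x_j,y_j)$ are nonzero, and since $h_j(0)=f(0,0)=0$ and $x_j(0)=0$ I may pass to derivatives through $\operatorname{ord}\phi'=\operatorname{ord}\phi-1$. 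Comparing orders in the displayed identity and applying property~(4) of Section~1 yields the per-branch relation $i_0(f,g_j)=i_0(\partial f/\partial X,g_j)+i_0(X,g_j)$.

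Finally I would multiply by $m_j$, sum over $j$, and use additivity (property~(3) of Section~1) to obtain $i_0(f,\partial f/\partial Y)=\mu_0(f)+i_0(X,\partial f/\partial Y)$. It then remains to evaluate $i_0(X,\partial f/\partial Y)=\operatorname{ord}_Y\frac{\partial f}{\partial Y}(0,Y)=\operatorname{ord}_Y f(0,Y)-1=N-1=i_0(f,X)-1$, which closes the proof. I expect the main obstacle to be the bookkeeping of finiteness: one must guarantee that on every branch of the polar the three relevant orders are finite and behave correctly under differentiation, and this is exactly where reducedness of $f$ (via Property~\ref{wl1}) and the hypothesis $f(0,Y)\ne0$ are needed.
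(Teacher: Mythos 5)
Your proof is correct and follows essentially the same route as the paper's: you pass to the branches of the polar curve $\partial f/\partial Y=0$, derive the per-branch identity $i_0(f,g_j)=i_0\bigl(\frac{\partial f}{\partial X},g_j\bigr)+i_0(X,g_j)$ from the chain rule and property (4) of Section 1, sum with multiplicities, and finish with $i_0\bigl(X,\frac{\partial f}{\partial Y}\bigr)=i_0(f,X)-1$. Your handling of the degenerate case and the finiteness bookkeeping (reducedness of $f$, $X\nmid\partial f/\partial Y$, coprimality of the derivatives) is merely a more explicit version of what the paper leaves to the reader.
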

\smallskip

\begin{proof}
It is easy to check using Property \ref{wl1} that $i_0 \left(f,\frac{\partial f}{\partial Y}\right)=+\infty$ if and only if $\mu_0(f)=+\infty$. 
Suppose that $\mu_0(f)<+\infty$ and $\frac{\partial f}{\partial Y}(0,0)=0$ 
(if $\frac{\partial f}{\partial Y}(0,0)\ne 0$ then the lemma is obvious).
Write $\frac{\partial f}{\partial Y}=g_1\cdots g_m$ with irreducible $g_i\in\mathbb{C}\{X,Y\}$.
Let $(x_i(t_i),y_i(t_i))\in\mathbb{C}\{t_i\}^2$ be a good parametrization of the branch $g_i=0$.
Differentiating and taking orders give $\mbox{ord} f(x_i(t_i),y_i(t_i))=\mbox{ord}\frac{\partial f}{\partial X}(x_i(t_i),y_i(t_i))+\mbox{ord}x_i(t_i)$ that is $i_0(f,g_i)=i_0\left(\frac{\partial f}{\partial X},g_i\right)+i_0(X,g_i)$ for $i=1,\ldots,m$. 
Summing up the obtained equalities we get 
$i_0\left(f,\frac{\partial f}{\partial Y}\right)=\mu_0(f)+i_0\left(X,\frac{\partial f}{\partial Y}\right)$
and the lemma follows since
$i_0\left(X,\frac{\partial f}{\partial Y}\right)=i_0(X,f)-1$. 
\end{proof}
\smallskip

\begin{Property}
\label{2_4}
Let $f\in\mathbb{C}\{X,Y\}$, $f(0,0)=0$ be a power series without multiple factors. Then
\begin{enumerate}
\item[$(i)$]  if $g=f\cdot$unit then $\mu_0(g)=\mu_0(f)$,
\item[$(ii)$] if $f=f_1\cdots f_m$, $f_i(0)=0$ and $f_i$ are pairwise coprime then
$$\mu_0(f)+m-1=\sum_{i=1}^{m}\mu_0(f_i)+2\sum_{1\le i<j\le m}i_0(f_i,f_j)$$
\end{enumerate}
\end{Property}
\smallskip

\begin{proof}
We may assume that $f(0,Y)\ne 0$ in $\mathbb{C}\{Y\}$.\\
$(i)$ It is easy to check that $i_0\left(g,\frac{\partial g}{\partial Y}\right)=i_0\left(f,\frac{\partial f}{\partial Y}\right)$ and $i_0(g,X)=i_0(f,X)$. Then $\mu_0(g)=\mu_0(f)$ by Teissier's lemma.\\
$(ii)$ The basic properties of intersection multiplicity give
\begin{flushleft}
$\displaystyle i_0\left(f,\frac{\partial f}{\partial Y}\right)=\sum_{i=1}^{m}i_0\left(f_i,\frac{\partial f_i}{\partial Y}\right)+2\sum_{1\le i<j\le m}i_0(f_i,f_j),$\\
$\displaystyle i_0(f,X)=\sum_{i=1}^{m}i_0(f_i,X)$
\end{flushleft}
Then we use Teissier's lemma.
\end{proof}
\smallskip

\noindent
In what follows we need a lemma due to Jung (\cite{Ju23}, Zehntes Kapitel, \S 4, S. 181).\\

\begin{Lemma}[Jung's lemma]
\label{Jung's lemma}
Let $f(X,Y)=Y^n+a_1(X)Y^{n-1}+\cdots+a_n(X)\in\mathbb{C}\{X\}[Y]$ be a distinguished irreducible polynomial of degree $n>1$.
Let \emph{$D(X)=\mbox{disc}_Yf(X,Y)$} be the discriminant of $f$ Then \emph{$\mbox{ord}D(X)\equiv n-1\pmod{2}$}.
\end{Lemma}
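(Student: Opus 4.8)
The plan is to read the roots of $f$ off a Puiseux parametrization and then track the order of the discriminant under the cyclic monodromy $t\mapsto\zeta t$, where $t=X^{1/n}$ and $\zeta=\exp(2\pi i/n)$. First I would locate the roots. Since $f$ is distinguished, $f(0,Y)=Y^n$, so $i_0(f,X)=\dim_{\mathbb{C}}\mathbb{C}\{Y\}/(Y^n)=n$; by the material of Section~\ref{s1} the branch $f=0$ then carries a Puiseux parametrization $(t^n,y(t))$, and the $n$ roots of $f$, viewed as a polynomial in $Y$, are the conjugate series $\eta_k(t)=y(\zeta^k t)$ for $k=0,\dots,n-1$. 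They are pairwise distinct because an irreducible polynomial over a field of characteristic zero is separable (equivalently $\gcd(n,\mbox{supp}\,y)=1$), so $D(X)\ne 0$.

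Next I would introduce the ``square root'' of the discriminant. Put $\Delta(t)=\prod_{0\le i<j\le n-1}(\eta_i(t)-\eta_j(t))$; each factor is a nonzero element of $\mathbb{C}\{t\}$, hence so is $\Delta$, and I set $v=\mbox{ord}_t\Delta<+\infty$. Since $D=\prod_{i<j}(\eta_i-\eta_j)^2$ is the discriminant of the monic polynomial $f$, substituting $X=t^n$ gives $D(t^n)=\Delta(t)^2$, and therefore $n\cdot\mbox{ord}_X D=\mbox{ord}_t D(t^n)=2v$.

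The heart of the argument is the transformation law of $\Delta$ under $t\mapsto\zeta t$. From $\eta_k(\zeta t)=y(\zeta^{k+1}t)=\eta_{k+1}(t)$ (indices mod $n$) this substitution permutes the roots by the $n$-cycle $(0\,1\,\cdots\,n-1)$, whose sign is $(-1)^{n-1}$; hence $\Delta(\zeta t)=(-1)^{n-1}\Delta(t)$. Comparing the coefficients of the lowest-order term $t^v$ on the two sides yields $\zeta^v=(-1)^{n-1}$, that is $\exp(2\pi i v/n)=\exp(\pi i(n-1))$, so $2v/n\equiv n-1\pmod 2$. As $\mbox{ord}_X D=2v/n$ is an integer, the congruence $\mbox{ord}_X D\equiv n-1\pmod 2$ follows.

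I expect the main obstacle to be precisely this sign/monodromy step: one must check that $\Delta$ is an honest power series in $t$ (not merely a Puiseux series in $X$), that the $n$ conjugates really exhaust the roots, and that the cyclic permutation contributes exactly the factor $(-1)^{n-1}$; everything else — the Puiseux expansion and the order bookkeeping — is routine. As a cross-check, Teissier's lemma together with $i_0(f,X)=n$ gives $\mbox{ord}_X D=i_0(f,\partial f/\partial Y)=\mu_0(f)+n-1$, so the assertion is equivalent to $\mu_0(f)$ being even; I would nonetheless avoid that route as a \emph{proof}, since the evenness of $\mu_0$ for a branch is typically deduced from, rather than used to prove, statements of this kind, and the monodromy argument above is self-contained.
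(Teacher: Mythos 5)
Your proposal is correct and follows essentially the same route as the paper: both write the roots as the conjugates $y(\zeta^k t)$ of a Puiseux parametrization, form the product $\prod_{i<j}(\eta_i-\eta_j)$ whose square is $D(t^n)$, and extract the parity of $\mbox{ord}D$ from the monodromy identity $\Delta(\zeta t)=(-1)^{n-1}\Delta(t)$. The only cosmetic difference is at the end, where the paper splits into two cases to conclude $D=d(X)^2$ or $D=Xd_1(X)^2$, while you compare leading coefficients to get $\zeta^v=(-1)^{n-1}$ directly; both are valid.
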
 
\smallskip

\begin{proof}
Let $\varepsilon_0$ be a primitive $n$-th root of unity. Then by Puiseux' Theorem $\displaystyle f(t^n,Y)=\prod_{k=0}^{n-1}{(Y-y(\varepsilon_0^kt))}$, where $y(t)\in\mathbb{C}\{t\}.$\\
Let $\displaystyle V_n(T_1,\ldots,T_n)=\prod_{1\leq i<j\leq n}{(T_i-T_j)}.$\\
Then \ $D(t^n)=\textrm{disc}_Yf(t^n,Y)=v_n(t)^2$ \ where \ $v_n(t)=V_n(y(t),y(\varepsilon_0 t),\ldots,y(\varepsilon_0^{n-1}t))$.
It is easy to check that $v_n(\varepsilon_0 t)=(-1)^{n-1}v_n(t)$. Let us distinguish two cases.\\[0.1cm]
Case 1. $n-1\equiv 0 \pmod{2}$.\\
From $v_n(\varepsilon_0 t)=v_n(t)$ we get $v_n(t)\in\mathbb{C}\{t^n\}$ i. e. $v_n(t)=d(t^n)$ where $d(X)\in\mathbb{C}\{X\}$. Thus $D(X)=d(X)^2$ and we get $\mbox{ord}D(X)\equiv 0\pmod{2}$.\\[0.1cm]
Case 2. $n-1\equiv 1 \pmod{2}$.\\
Then we have $v_n(\varepsilon_0 t)=-v_n(t)$ which implies $v_n(t)\in t^{\frac{n}{2}}\mathbb{C}\{t^n\}$
i. e. $v_n(t)=t^{\frac{n}{2}}d_1(t^n)$ where $d_1(X)\in\mathbb{C}\{X\}$. Thus $D(X)=Xd_1(X)^2$ and  $\mbox{ord}D(X)\equiv 1\pmod{2}$.\\
Summing up we get  $\mbox{ord}D(X)\equiv n-1\pmod{2}$.
\end{proof}
\medskip

\noindent
Now we can prove\\

\begin{Theorem}
\label{Theorem 2.6}
Let $r_0(f)$ be the number of branches of the reduced local curve $f=0$. Then 
$$\mu_0(f)+r_0(f)-1\equiv 0\pmod{2}.$$
\end{Theorem}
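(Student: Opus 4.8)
The plan is to reduce the congruence to the case of a single branch and there to combine Teissier's lemma with Jung's lemma, the bridge between them being the discriminant.

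First I would dispose of the several-branch case. Write $f=f_1\cdots f_r$ with pairwise coprime irreducible factors $f_i$, so that $r=r_0(f)$ and each $f_i(0,0)=0$. Property \ref{2_4}$(ii)$ gives
$$\mu_0(f)+r-1=\sum_{i=1}^{r}\mu_0(f_i)+2\sum_{1\le i<j\le r}i_0(f_i,f_j),$$
and upon reduction modulo $2$ the doubled sum vanishes, so that $\mu_0(f)+r_0(f)-1\equiv\sum_{i=1}^{r}\mu_0(f_i)\pmod 2$. Hence it suffices to show that the Milnor number of every branch is even; equivalently, for a single branch the theorem asserts exactly that $\mu_0$ is even.

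So let $g=0$ be a branch. If it is nonsingular then $\mu_0(g)=0$ and we are done, so assume it is singular. By Property \ref{2_4} invariance under coordinates (the second Property of Section \ref{s2}) I may change coordinates so that $g(0,Y)\ne 0$, and then by the Weierstrass preparation theorem together with Property \ref{2_4}$(i)$ I may replace $g$ by an associated distinguished irreducible polynomial $p=Y^n+a_1(X)Y^{n-1}+\cdots+a_n(X)$, of degree $n=i_0(p,X)\ge\mbox{ord}\,g>1$, without changing $\mu_0$. Teissier's lemma applied to $p$ yields
$$\mu_0(p)=i_0\!\left(p,\frac{\partial p}{\partial Y}\right)-i_0(p,X)+1=i_0\!\left(p,\frac{\partial p}{\partial Y}\right)-(n-1),$$
so everything comes down to the parity of $i_0(p,\partial p/\partial Y)$.

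The decisive step, which I expect to be the main obstacle, is the identity
$$i_0\!\left(p,\frac{\partial p}{\partial Y}\right)=\mbox{ord}_X D(X),\qquad D(X)=\mbox{disc}_Y\,p(X,Y).$$
I would prove it through the Puiseux parametrization $(t^n,y(t))$ of $p=0$: by the parametrization property (Section \ref{s1}, property 4) one has $i_0(p,\partial p/\partial Y)=\mbox{ord}_t\,\frac{\partial p}{\partial Y}(t^n,y(t))$, and from $p(t^n,Y)=\prod_{k=0}^{n-1}(Y-y(\varepsilon_0^k t))$ (with $\varepsilon_0$ a primitive $n$-th root of unity, as in the proof of Jung's lemma) one gets $\frac{\partial p}{\partial Y}(t^n,y(t))=\prod_{k=1}^{n-1}(y(t)-y(\varepsilon_0^k t))$. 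On the other hand $D(t^n)=v_n(t)^2$, and since the substitution $t\mapsto\varepsilon_0^i t$ preserves orders, each of the $n$ ``rows'' of the double product $v_n(t)^2=\pm\prod_{i\ne j}(y(\varepsilon_0^i t)-y(\varepsilon_0^j t))$ contributes the same order as the displayed factor; hence $\mbox{ord}_t D(t^n)=n\cdot\mbox{ord}_t\,\frac{\partial p}{\partial Y}(t^n,y(t))$, and dividing by $n$ (because $\mbox{ord}_t D(t^n)=n\cdot\mbox{ord}_X D$) gives the identity. Alternatively one may invoke the standard equality $i_0(p,q)=\mbox{ord}_X\mbox{Res}_Y(p,q)$ for a Weierstrass polynomial $p$. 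Granting the identity, Jung's lemma gives $\mbox{ord}_X D\equiv n-1\pmod 2$, whence $i_0(p,\partial p/\partial Y)\equiv n-1$, and the Teissier identity above forces $\mu_0(p)\equiv(n-1)-(n-1)\equiv 0\pmod 2$. Together with the first paragraph this completes the proof.
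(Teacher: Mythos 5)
Your proof is correct and follows essentially the same route as the paper: reduce to the irreducible case via Property \ref{2_4}$(ii)$ (where the cross terms drop out mod $2$), pass to a distinguished polynomial by Weierstrass preparation, and combine Teissier's lemma with Jung's lemma through the identity $i_0(p,\partial p/\partial Y)=\mbox{ord}_X\,\mbox{disc}_Y p$. The only difference is that you supply a (correct) Puiseux-parametrization proof of that discriminant identity, which the paper simply invokes as ``the classical formula for the intersection multiplicity.''
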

\smallskip

\begin{proof}
Suppose that $f$ is an irreducible power series. 
By the Weierstrass Preparation Theorem it suffices to consider the case 
where $f=Y^n+a_1(X)Y^{n-1}+\cdots+a_n(X)$ is a distinguished polynomial. 
Let $D(X)=\mbox{disc}_Yf(X,Y)$. By the classical formula for the intersection multiplicity
$i_0\left(f,\frac{\partial f}{\partial Y}\right)=\mbox{ord}D(X)$. 
Thus by Jung's lemma $i_0\left(f,\frac{\partial f}{\partial Y}\right)\equiv n-1\pmod{2}$ and by Teissier's lemma we get $\mu_0(f)=i_0\left(f,\frac{\partial f}{\partial Y}\right)-n+1\equiv 0\pmod{2}$.

The general case we get from Property \ref{2_4} ($ii$) applied to the decomposition of $f$:
$f=f_1\cdots f_r$, $r=r_0(f)$ into irreducible factors $f_i$.
\end{proof}
\bigskip

\noindent
For any reduced power series $f\in\mathbb{C}\{X,Y\}$ we put 
$$\delta_0(f)=\frac{1}{2}(\mu_0(f)+r_0(f)-1)$$
and call $\delta_0(f)$ the \emph{double point number} of the local curve $f=0$. \\
From the properties of the Milnor number we get\\

\begin{Proposition}\ \\[-0.3cm]
\begin{enumerate}
\item[$(i)$] $\delta_0(f)\geq 0$ is an integer, $\delta_0(f)=0$ if and only if $f=0$ is nonsingular,
\item[$(ii)$] $\delta_0(f\circ \Phi)=\delta_0(f)$ for any local system of coordinates $\Phi$,
\item[$(iii)$] $\delta_0(\prod_{i=1}^{m}{f_i})=\sum_{i=1}^{m}{\delta_0(f_i)}+\sum_{1\leq i<j\leq m}{i_0(f_i,f_j)}$
where $f_i$ are coprime\\[0.25cm] power series.
\end{enumerate}
\label{Proposition 2_7}
\end{Proposition}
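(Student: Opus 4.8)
The plan is to derive all three statements from results already in hand: the parity Theorem~\ref{Theorem 2.6}, the coordinate invariance of $\mu_0$ proved earlier in this section, the additivity Property~\ref{2_4}$(ii)$, together with the elementary fact that the number of branches $r_0$ is additive on coprime reduced factors. Only part $(i)$ needs a genuinely new (but short) argument, concerning the Milnor number at a smooth point.

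For part $(i)$ I would first note that $\mu_0(f)<+\infty$ by Property~\ref{wl1} since $f$ is reduced, so $\delta_0(f)$ is a well-defined rational number; it is an integer precisely because $\mu_0(f)+r_0(f)-1$ is even by Theorem~\ref{Theorem 2.6}. Nonnegativity is immediate, since $\mu_0(f)=\dim_{\mathbb{C}}\mathbb{C}\{X,Y\}/(\partial f/\partial X,\partial f/\partial Y)\geq 0$ and $r_0(f)\geq 1$, whence $\mu_0(f)+r_0(f)-1\geq 0$. The characterization $\delta_0(f)=0\Leftrightarrow f=0$ nonsingular is the core point: because the two contributions satisfy $\mu_0(f)\ge 0$ and $r_0(f)-1\ge 0$, the vanishing $\delta_0(f)=0$ is equivalent to having $\mu_0(f)=0$ and $r_0(f)=1$ simultaneously. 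Here I would use that $\mathrm{ord}\,f=1$ forces the linear part of $f$ to be nonzero, so one of $\partial f/\partial X,\partial f/\partial Y$ has nonzero constant term and is a unit, giving $(\partial f/\partial X,\partial f/\partial Y)=\mathbb{C}\{X,Y\}$ and hence $\mu_0(f)=0$; conversely, if $\mathrm{ord}\,f\geq 2$ both partials lie in the maximal ideal, the quotient surjects onto $\mathbb{C}$, and $\mu_0(f)\geq 1$. Since a curve of multiplicity one is a single smooth branch (so $r_0=1$), this pins down both factors in each direction.

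Parts $(ii)$ and $(iii)$ are then bookkeeping. For $(ii)$ I would combine the already-proved invariance $\mu_0(f\circ\Phi)=\mu_0(f)$ with the observation that the ring isomorphism $g\mapsto g\circ\Phi$ carries the irreducible factorization of $f$ to that of $f\circ\Phi$, so $r_0(f\circ\Phi)=r_0(f)$; substituting into the definition gives the claim. For $(iii)$, with $f=\prod_{i=1}^m f_i$ and the $f_i$ pairwise coprime and reduced, the branches of $f$ are exactly those of the $f_i$ taken together, so $r_0(f)=\sum_i r_0(f_i)$, while Property~\ref{2_4}$(ii)$ yields $\mu_0(f)=\sum_i\mu_0(f_i)+2\sum_{i<j}i_0(f_i,f_j)-(m-1)$. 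Feeding both into $2\delta_0(f)=\mu_0(f)+r_0(f)-1$ and regrouping $\sum_i(\mu_0(f_i)+r_0(f_i))-m=\sum_i(\mu_0(f_i)+r_0(f_i)-1)$ collapses the right-hand side to $2\sum_i\delta_0(f_i)+2\sum_{i<j}i_0(f_i,f_j)$, and dividing by two finishes.

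The only place needing care — the main obstacle — is the reverse implication in $(i)$: that a singular reduced curve has strictly positive $\delta_0$. I would route this through $\mu_0$ rather than through $\delta_0$ directly, using the ideal-membership argument above ($\mathrm{ord}\,f\geq 2\Rightarrow\mu_0(f)\geq 1$, hence $\delta_0(f)=\frac12(\mu_0(f)+r_0(f)-1)\geq\frac12>0$), so as to avoid invoking any multiplicity inequality not established in the text. Everything else is a direct substitution into the definition of $\delta_0$.
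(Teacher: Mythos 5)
Your proof is correct and follows exactly the route the paper intends: the authors state the proposition with only the remark that it follows ``from the properties of the Milnor number,'' meaning precisely the combination of Theorem~\ref{Theorem 2.6} (parity, hence integrality), Property~\ref{2_4} (coordinate invariance and additivity of $\mu_0$), and the elementary facts about $r_0$ that you supply. Your explicit verification that $\mu_0(f)=0$ if and only if $\mathrm{ord}\,f=1$ fills in the one detail the paper leaves implicit, and does so correctly.
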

\smallskip

\begin{Remark}
\label{Remark 2_8}
\emph{
The reduced curve $f=0$ has an \emph{ordinary $r$-fold singularity} if it has $r$ branches, all nonsingular and intersecting each other with multiplicity $1$. For such a curve we have $\mu_0=(r-1)^2$ and $\delta_0=\frac{1}{2}r(r-1)$.
}
\end{Remark}
\smallskip

Assume that $f\in\mathbb{C}\{X,Y\}$ is a power series with no multiple factors. If $f=f_1\cdots f_r$ is a product of irreducible factors  $f_i\in\mathbb{C}\{X,Y\}$ then we set
$$c_i(f)=\mu_0(f_i)+\sum_{j\neq i}{i_0(f_i,f_j)} \ \mbox{ for } \ i=1,\ldots,r$$
A curve $\Psi=0$ is said to be an \emph{adjoint to} $f=0$ if 
$$i_0(f_i,\Psi)\geq c_i(f) \ \mbox{ for } \ i=1,\ldots,r.$$

\begin{Remark}
\label{Remark 2_9}
\emph{
Let $f=0$ be an ordinary $r$-fold singularity. Then $\Psi=0$ is an adjoint to $f=0$ if and only if $\mbox{ord}\Psi\geq r-1$.
}
\end{Remark}
\smallskip

The following result is known as Noether's Theorem on the double-point divisor. Let $g,h\in\mathbb{C}\{X,Y\}$.\\

\begin{Theorem}
\label{Noether's Theorem on the double-point divisor}
Suppose that the local curves $f=0$ and $g=0$ have no common component. 
If $h$ satisfies Noether's conditions
$$i_0(f_i,h)\geq i_0(f_i,g)+c_i(f) \ \mbox{ for } \ i=1,\ldots,r$$
then $h$ belongs to the ideal generated by  $f,g$ in the ring \ $\mathbb{C}\{X,Y\}$.
\end{Theorem}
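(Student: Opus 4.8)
The plan is to translate the membership $h\in(f,g)$ into a condition on the values of power series along the branches of $f=0$, and then to construct the required multiplier by hand. First I would fix good parametrizations $(x_i(t),y_i(t))$ of the branches $f_i=0$, recalling that $i_0(f_i,\psi)=\mathrm{ord}\,\psi(x_i(t),y_i(t))$ for every $\psi$. Since $f=0$ and $g=0$ share no component we have $f_i\nmid g$, so $g(x_i(t),y_i(t))\ne 0$, and Noether's conditions say exactly that $\mathrm{ord}\,h(x_i(t),y_i(t))\ge\mathrm{ord}\,g(x_i(t),y_i(t))+c_i(f)$; hence the quotient $\rho_i(t):=h(x_i(t),y_i(t))/g(x_i(t),y_i(t))$ is a genuine element of $\mathbb{C}\{t\}$ with $\mathrm{ord}\,\rho_i\ge c_i(f)$. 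I claim it suffices to produce a single $\psi\in\mathbb{C}\{X,Y\}$ with $\psi(x_i(t),y_i(t))=\rho_i(t)$ for all $i$: then $\psi g-h$ vanishes along every branch, so $f_i\mid(\psi g-h)$ for each $i$, and as the $f_i$ are pairwise coprime, $f\mid(\psi g-h)$; writing $\psi g-h=-Cf$ gives $h=\psi g+Cf\in(f,g)$.

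The hard part will be the single-branch statement, which is the true engine of the argument: for irreducible $f_i$ with good parametrization $(x_i,y_i)$, every $\eta\in\mathbb{C}\{t\}$ with $\mathrm{ord}\,\eta\ge 2\delta_0(f_i)=\mu_0(f_i)$ is of the form $\chi(x_i(t),y_i(t))$ for some $\chi\in\mathbb{C}\{X,Y\}$; equivalently, $t^{\mu_0(f_i)}\mathbb{C}\{t\}$ lies in the image of $\mathbb{C}\{X,Y\}$ under evaluation on the branch. This is the conductor estimate for a branch. I would derive it from the fact that $2\delta_0(f_i)$ is the conductor of the semigroup $\Gamma(f_i)$, so that every integer $\ge\mu_0(f_i)$ is realized as $\mathrm{ord}\,\chi(x_i,y_i)$ for a suitable $\chi$; a successive-approximation argument (subtract off the leading term, the order strictly increasing at each step) then upgrades ``every order is realized'' to ``every series of order $\ge\mu_0(f_i)$ is realized,'' with convergence of the correction series guaranteed by completeness. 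I expect this conductor estimate to be the main obstacle, since it is the only step requiring input beyond the formal bookkeeping of intersection numbers.

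With the single-branch lemma available, I would separate the branches using the auxiliary series $F_i:=\prod_{j\ne i}f_j$. Because $f_j\mid F_i$ for $j\ne i$, the value $F_i(x_j,y_j)$ vanishes whenever $j\ne i$, whereas $F_i(x_i,y_i)\ne 0$ has order $\sum_{j\ne i}i_0(f_i,f_j)$. The defining identity $c_i(f)=\mu_0(f_i)+\sum_{j\ne i}i_0(f_i,f_j)$ then forces $\eta_i:=\rho_i/F_i(x_i,y_i)$ to have order $\ge\mu_0(f_i)$, so the single-branch lemma yields $\chi_i\in\mathbb{C}\{X,Y\}$ with $\chi_i(x_i,y_i)=\eta_i$. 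Setting $\psi:=\sum_i\chi_iF_i$ and evaluating on the $k$-th branch annihilates every term with $i\ne k$ and leaves $\psi(x_k,y_k)=\chi_k(x_k,y_k)F_k(x_k,y_k)=\eta_kF_k(x_k,y_k)=\rho_k$, which is precisely what the reduction in the first paragraph demands. This produces the multiplier $\psi$ and completes the proof.
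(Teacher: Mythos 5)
Your reduction to a branchwise interpolation problem and the gluing via $F_i=\prod_{j\ne i}f_j$ are sound: the bookkeeping $\mathrm{ord}\,\rho_i\ge c_i(f)$, $\mathrm{ord}\,F_i(x_i(t),y_i(t))=\sum_{j\ne i}i_0(f_i,f_j)$, hence $\mathrm{ord}\,\eta_i\ge\mu_0(f_i)$, is exactly right, and the final descent $f\mid(\psi g-h)$ using pairwise coprimality is correct. This is a genuinely different architecture from the paper's proof, which parametrizes all branches simultaneously by a common parameter $u$ with $X=u^N$, builds the multiplier at one stroke by Lagrange interpolation (Lemma \ref{Lemma 2_12}), and then descends from $\mathbb{C}\{u\}[Y]$ to $\mathbb{C}\{X\}[Y]$.

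The gap is in the single-branch lemma, which you rightly call the engine. You propose to derive the inclusion $t^{\mu_0(f_i)}\mathbb{C}\{t\}\subseteq\mathbb{C}\{x_i(t),y_i(t)\}$ from ``the fact that $2\delta_0(f_i)$ is the conductor of the semigroup $\Gamma(f_i)$.'' In this paper that fact is Theorem \ref{Theorem 2_15}, and its proof invokes Noether's Theorem \ref{Noether's Theorem on the double-point divisor} --- the very statement you are proving; so as written the argument is circular. You would need an independent proof that every integer $\ge\mu_0(f_i)$ lies in $\Gamma(f_i)$ (e.g.\ via the Ap\'ery symmetry of the semigroup of a plane branch together with $\#(\mathbb{N}\setminus\Gamma(f_i))=\delta_0(f_i)$), and that is essentially as hard as the $r=1$ case of the theorem itself. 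A secondary, fixable issue: the upgrade from ``every order $\ge\mu_0$ is attained'' to ``every series of order $\ge\mu_0$ is attained'' is not a matter of completeness. The ring $\mathbb{C}\{X,Y\}$ is not $\mathfrak{m}$-adically complete, and the orders of your successive correctors $\chi_k$ in $(X,Y)$ are only bounded \emph{above} by $i_0(f_i,\chi_k)/\mathrm{ord}f_i$, not below, so $\sum_k\chi_k$ need not converge. The clean mechanism is to note first that the conductor ideal is nonzero because $\mathbb{C}\{t\}$ is a finite module over $\mathbb{C}\{x_i(t),y_i(t)\}$, say $t^N\mathbb{C}\{t\}\subseteq\mathbb{C}\{x_i(t),y_i(t)\}$, and then fill in the finitely many levels between $\mu_0(f_i)$ and $N$ by a triangular linear-algebra argument; but this repair does not remove the circularity above.
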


Let us write $h=\Phi f+\Psi g$ \ with \ $\Phi,\Psi\in\mathbb{C}\{X,Y\}$. Then Noether's conditions imply that $\Psi=0$ is an adjoint to $f=0$. In connection with Noether's Theorem  let us note\\
\begin{Theorem}
\label{Theorem 2_11}
Let $f\in\mathbb{C}\{X,Y\}$ be an irreducible power series. 
Then there does not exist $\Psi\in\mathbb{C}\{X,Y\}$ such that $i_0(f,\Psi)=\mu_0(f)-1$. Let $h\in\mathbb{C}\{X,Y\}$ be such that $i_0(f,h)=i_0(f,g)+\mu_0(f)-1$, then $h\notin(f,g)\mathbb{C}\{X,Y\}$.
\end{Theorem}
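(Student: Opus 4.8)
The plan is to reduce both assertions to the single statement that $\mu_0(f)-1$ is not a value of the branch $f=0$, i.e. that $\mu_0(f)-1\notin\Gamma(f)$. Granting the first assertion, the second is immediate. Since $f$ is irreducible it has no multiple factor, so $\mu_0(f)<+\infty$ by Property~\ref{wl1}, and as $f=0$, $g=0$ share no component we have $i_0(f,g)<+\infty$; hence $i_0(f,h)<+\infty$ and $f\nmid h$. Suppose, contrary to the claim, that $h=\Phi f+\Psi g$ with $\Phi,\Psi\in\mathbb{C}\{X,Y\}$. Because $h\equiv\Psi g\pmod f$ the ideals $(f,h)$ and $(f,\Psi g)$ coincide, so $i_0(f,h)=i_0(f,\Psi g)$, and by additivity of the intersection multiplicity this equals $i_0(f,\Psi)+i_0(f,g)$. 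Comparing with the hypothesis $i_0(f,h)=i_0(f,g)+\mu_0(f)-1$ gives $i_0(f,\Psi)=\mu_0(f)-1$ with $f\nmid\Psi$, exactly the situation excluded by the first assertion. This contradiction proves $h\notin(f,g)\mathbb{C}\{X,Y\}$.

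It remains to show $\mu_0(f)-1\notin\Gamma(f)$. Since $f$ is irreducible, $r_0(f)=1$ and the Milnor formula degenerates to $\mu_0(f)=2\delta_0(f)$, so the claim reads $2\delta_0(f)-1\notin\Gamma(f)$. Here I would invoke the two classical facts about the semigroup of a plane branch: the number of gaps of $\Gamma(f)$ equals $\delta_0(f)$, and $\Gamma(f)$ is \emph{symmetric} (the local ring $\mathbb{C}\{X,Y\}/(f)$ of a plane branch being Gorenstein). For a symmetric numerical semigroup with $\delta$ gaps the conductor equals $2\delta$ and the Frobenius number, i.e. the largest gap, is $2\delta-1$. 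Applying this with $\delta=\delta_0(f)$ gives conductor $2\delta_0(f)=\mu_0(f)$ and largest gap $\mu_0(f)-1$; in particular $\mu_0(f)-1\notin\Gamma(f)$, which is what we need, and then no $\Psi$ with $i_0(f,\Psi)=\mu_0(f)-1$ can exist.

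The main obstacle is the symmetry of $\Gamma(f)$, equivalently the identity that the conductor of $\Gamma(f)$ equals $\mu_0(f)$. To keep the argument in the intersection-theoretic spirit of the paper I would work with a Puiseux parametrization $(t^n,y(t))$, $n=i_0(f,X)$, so that $i_0(f,g)=\mathrm{ord}_t\,g(t^n,y(t))$, and set up the duality between the values of functions $g(t^n,y(t))$ and the values of the associated differentials. Teissier's lemma, in the form $i_0\!\left(f,\frac{\partial f}{\partial Y}\right)=\mu_0(f)+i_0(f,X)-1$, supplies the numerical normalization that fixes the conductor at $\mu_0(f)$; the bookkeeping showing that an integer $z$ lies in $\Gamma(f)$ precisely when $\mu_0(f)-1-z$ does not is the technical heart of the matter. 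Once this symmetry is in place the theorem follows at once, the non-existence of $\Psi$ with $i_0(f,\Psi)=\mu_0(f)-1$ being merely the statement that the top gap of $\Gamma(f)$ is never attained.
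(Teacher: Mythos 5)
Your reduction of the second assertion to the first is correct and is exactly the paper's argument: from $h=\Phi f+\Psi g$ and additivity one extracts $i_0(f,\Psi)=\mu_0(f)-1$. The problem is the first assertion, which you correctly recast as $\mu_0(f)-1\notin\Gamma(f)$ but then do not actually prove. You invoke two classical facts --- that $\Gamma(f)$ is symmetric and that its number of gaps equals $\delta_0(f)$ --- and you yourself flag the symmetry as ``the technical heart of the matter,'' offering only a sketch (``set up the duality between the values of functions and the values of the associated differentials'') without carrying it out. That is a genuine gap: the statement being proved \emph{is} essentially the normalization of the conductor at $\mu_0(f)$, so deferring to the symmetry-plus-gap-count package is deferring to results at least as deep as the theorem. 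Worse, the gap-count fact is not available in this paper's framework at all: here $\delta_0(f)$ is \emph{defined} as $\frac{1}{2}(\mu_0(f)+r_0(f)-1)$, and the identification of this quantity with the number of gaps of $\Gamma(f)$ (classically, $\dim_{\mathbb{C}}\overline{\mathcal{O}}/\mathcal{O}$) is precisely the Milnor--Risler formula that the authors deliberately avoid proving. Note also that the paper's Theorem \ref{Theorem 2_15}, which pins down the conductor, is \emph{deduced from} Theorem \ref{Theorem 2_11}, so leaning on conductor/symmetry facts here risks circularity within this development.

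The paper's own proof is short, self-contained, and quite different in mechanism. After normalizing $f$ to a $Y$-distinguished polynomial of degree $n$ and $\Psi$ to a polynomial of $Y$-degree $\le n-1$, Teissier's lemma converts the hypothesis $i_0(f,\Psi)=\mu_0(f)-1$ into $i_0(f,X\Psi)=i_0\left(f,\frac{\partial f}{\partial Y}\right)$. Writing $f(u^n,Y)=\prod_{\varepsilon^n=1}(Y-y(\varepsilon u))$, this forces every term of the Lagrange sum
$$\sum_{\varepsilon^n=1}\frac{u^n\Psi(u^n,y(\varepsilon u))}{\frac{\partial f}{\partial Y}(u^n,y(\varepsilon u))}$$
to have order $0$ with the \emph{same} nonzero leading coefficient, so the sum has order $0$; but by the Lagrange interpolation identity (Lemma \ref{Lemma 2_13}) the sum equals $u^n\Psi_0(u^n)$, of order at least $n$. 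This contradiction is exactly the ``bookkeeping'' you postponed. If you want to salvage your route, you would need to supply a full proof of the symmetry of $\Gamma(f)$ together with an intersection-theoretic proof that the conductor equals $\mu_0(f)$ --- at which point you will find yourself reproving something very close to the paper's residue computation.
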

\medskip

The second part of (\ref{Theorem 2_11}) follows easily from the first.
Indeed, if we had $h=\Phi f+\Psi g$ with $\Phi,\Psi\in\mathbb{C}\{X,Y\}$ and $i_0(f,h)=i_0(f,g)+\mu_0(f)-1$ then we would get $i_o(f,\Psi)=\mu_0(f)-1$, a contradiction with the first part of (\ref{Theorem 2_11}).

Let us pass now to the proofs of Theorems (\ref{Noether's Theorem on the double-point divisor}) and (\ref{Theorem 2_11}).\\
Let $F(u,Y),G(u,Y),H(u,Y)\in\mathbb{C}\{u\}[Y]$ where $u$ is a variable. Assume that $F(u,Y)=\prod_{i=1}^{n}{(Y-y_i(u))}$ in $\mathbb{C}\{u\}[Y]$ and $y_i(u)\neq y_j(u)$ for $i\neq j$.\\

\begin{Lemma}
\label{Lemma 2_12}
If \emph{$\mbox{ord}H(u,y_i(u))\geq\mbox{ord}\frac{\partial F}{\partial Y}(u,y_i(u))G(u,y_i(u))$ for $i=1,\ldots,n$}, then
\emph{$H(u,Y)\in(F(u,Y),G(u,Y))\mathbb{C}\{u\}[Y]$}.
\end{Lemma}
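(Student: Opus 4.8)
The plan is to use Lagrange interpolation to produce an explicit element of the ideal that agrees with $H$ modulo $F$. Since $F(u,Y)=\prod_{i=1}^n(Y-y_i(u))$ with distinct roots $y_i(u)$, the residues modulo $F$ are determined by values at the $y_i(u)$. I would work in the ring $\mathbb{C}\{u\}[Y]/(F)$, or more precisely over the fraction field, and reconstruct a polynomial from its prescribed values. The key point is that $\frac{\partial F}{\partial Y}(u,y_i(u))=\prod_{j\ne i}(y_i(u)-y_j(u))\ne 0$, so this quantity is invertible in $\mathbb{C}\{u\}[t^{-1}]$ after clearing a power of $u$, which is what lets interpolation go through.

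First I would reduce to showing that $H$ lies in $(F,G)$ by exhibiting $\Psi(u,Y)\in\mathbb{C}\{u\}[Y]$ with $H\equiv \Psi G\pmod{F}$; then $H=\Phi F+\Psi G$ for some $\Phi$. To find $\Psi$, set $w_i(u)=H(u,y_i(u))/G(u,y_i(u))$; the hypothesis $\mbox{ord}H(u,y_i(u))\ge \mbox{ord}\frac{\partial F}{\partial Y}(u,y_i(u))G(u,y_i(u))$ is exactly the condition ensuring that the interpolated coefficients are power series in $u$ rather than merely Laurent series. Concretely, the Lagrange interpolation formula gives
$$
\Psi(u,Y)=\sum_{i=1}^n \frac{H(u,y_i(u))}{G(u,y_i(u))}\prod_{j\ne i}\frac{Y-y_j(u)}{y_i(u)-y_j(u)},
$$
and one checks that $\Psi(u,y_i(u))=H(u,y_i(u))/G(u,y_i(u))$ for each $i$, so $\Psi G - H$ vanishes at all roots of $F$ and is therefore divisible by $F$ in $\mathbb{C}\{u\}[Y]$.

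The main obstacle, and the step requiring genuine care, is verifying that $\Psi$ actually has coefficients in $\mathbb{C}\{u\}$ (convergent power series with no negative powers of $u$), not just in the field $\mathbb{C}\{u\}[u^{-1}]$ or in some ring of fractional-power series. A priori the $y_i(u)$ are Puiseux series and the denominators $\prod_{j\ne i}(y_i(u)-y_j(u))=\frac{\partial F}{\partial Y}(u,y_i(u))$ introduce fractional powers of $u$ into each summand. The resolution is that $\Psi$ is symmetric under the Galois action permuting the roots $y_i(u)$, so all fractional powers cancel and $\Psi$ descends to $\mathbb{C}((u))[Y]$; the order hypothesis then guarantees no negative powers of $u$ survive, giving $\Psi\in\mathbb{C}\{u\}[Y]$. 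I would organize this by grouping the conjugate roots and invoking that a Galois-invariant element of $\mathbb{C}((u^{1/n}))$ with nonnegative order lies in $\mathbb{C}\{u\}$. Once $\Psi\in\mathbb{C}\{u\}[Y]$ is established, the divisibility $F\mid(\Psi G-H)$ in $\mathbb{C}\{u\}[Y]$ is automatic and the conclusion follows.
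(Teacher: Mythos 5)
Your proof is correct and is essentially the paper's own argument: you write down the same Lagrange interpolation polynomial $\Psi(u,Y)=\sum_i\frac{H(u,y_i(u))}{\frac{\partial F}{\partial Y}(u,y_i(u))G(u,y_i(u))}\cdot\frac{F(u,Y)}{Y-y_i(u)}$, use the order hypothesis to see its coefficients lie in $\mathbb{C}\{u\}$, and conclude $F\mid \Psi G-H$. The Galois-descent step you single out as the main obstacle is superfluous, since the lemma's hypothesis that $F$ splits as $\prod_i(Y-y_i(u))$ in $\mathbb{C}\{u\}[Y]$ already forces each $y_i(u)\in\mathbb{C}\{u\}$ (this is exactly what the substitution $X=u^N$ arranges in the application), so each coefficient is simply a quotient of one-variable power series of nonnegative relative order.
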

\smallskip

\begin{proof}
Let
$$
\Psi(u,Y)=\sum_{i=1}^{n}{\frac{H(u,y_i(u))}{\frac{\partial F}{\partial Y}(u,y_i(u))G(u,y_i(u))}\frac{F(u,Y)}{(Y-y_i(u))}}.
$$
Then $\Psi(u,Y)\in\mathbb{C}\{u\}[Y]$ and $H(u,y_i(u))=\Psi(u,y_i(u))G(u,y_i(u))$ for \linebreak 
$i=1,\ldots,n$. 
Therefore $H(u,Y)\equiv\Psi(u,Y)G(u,Y)\bmod(Y-y_i(u))$ for \linebreak 
$i=1,\ldots,n$  and $H(u,Y)\equiv\Psi(u,Y)G(u,Y)\bmod F(u,Y)$  
what implies $H(u,Y)\in(F(u,Y),G(u,Y))\mathbb{C}\{u\}[Y]$.
\end{proof}
\smallskip

\begin{Lemma}
\label{Lemma 2_13}
If \ $\Psi(u,Y)=\Psi_0(u)Y^{n-1}+\cdots+\Psi_{n-1}(u)\in\mathbb{C}\{u\}[Y]$, then 
$$
\sum_{i=1}^{n}{\frac{\Psi(u,y_i(u))}{\frac{\partial F}{\partial Y}(u,y_i(u))}}=\Psi_0(u).
$$
\end{Lemma}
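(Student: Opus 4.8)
The plan is to read this as the statement that the leading coefficient extracted from a Lagrange interpolation formula, set up over the distinct nodes $y_1(u),\ldots,y_n(u)$, reproduces precisely the given sum. Throughout I would work in the fraction field $K$ of the integral domain $\mathbb{C}\{u\}$: since $y_i(u)\neq y_j(u)$ for $i\neq j$, each difference $y_i(u)-y_j(u)$ is a nonzero element of $\mathbb{C}\{u\}$ and hence invertible in $K$, so every quotient written below is legitimate.

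The first step is to evaluate $\frac{\partial F}{\partial Y}$ at the nodes. Differentiating $F(u,Y)=\prod_{k=1}^{n}(Y-y_k(u))$ by the product rule gives $\frac{\partial F}{\partial Y}(u,Y)=\sum_{k=1}^{n}\prod_{j\neq k}(Y-y_j(u))$. Substituting $Y=y_i(u)$, every summand with $k\neq i$ carries the factor $y_i(u)-y_i(u)=0$ and therefore drops out, leaving
$$\frac{\partial F}{\partial Y}(u,y_i(u))=\prod_{j\neq i}(y_i(u)-y_j(u)),$$
which is a nonzero element of $\mathbb{C}\{u\}$, invertible in $K$.

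The second step invokes Lagrange interpolation over $K$. Because $\Psi(u,Y)$ has degree at most $n-1$ in $Y$ and the $n$ nodes $y_i(u)$ are distinct in $K$, the interpolation formula applied to the polynomial $Y\mapsto\Psi(u,Y)$ yields $\Psi(u,Y)=\sum_{i=1}^{n}\Psi(u,y_i(u))\prod_{j\neq i}\frac{Y-y_j(u)}{y_i(u)-y_j(u)}$. Rewriting the denominator $\prod_{j\neq i}(y_i(u)-y_j(u))$ as $\frac{\partial F}{\partial Y}(u,y_i(u))$ by the identity just established, this becomes
$$\Psi(u,Y)=\sum_{i=1}^{n}\frac{\Psi(u,y_i(u))}{\frac{\partial F}{\partial Y}(u,y_i(u))}\prod_{j\neq i}(Y-y_j(u)).$$

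The final step compares the coefficients of $Y^{n-1}$ on both sides. On the left this coefficient is $\Psi_0(u)$ by hypothesis. On the right each product $\prod_{j\neq i}(Y-y_j(u))$ has $n-1$ linear factors and is therefore monic of degree $n-1$, so it contributes exactly the scalar $\frac{\Psi(u,y_i(u))}{\frac{\partial F}{\partial Y}(u,y_i(u))}$ to the coefficient of $Y^{n-1}$. Equating the two expressions gives the assertion $\Psi_0(u)=\sum_{i=1}^{n}\frac{\Psi(u,y_i(u))}{\frac{\partial F}{\partial Y}(u,y_i(u))}$. I do not expect any genuine obstacle: the only point demanding a moment's care is the algebraic setting, namely that Lagrange interpolation be justified over the fraction field $K$ rather than over a field like $\mathbb{C}$, and this is immediate once one observes that $\mathbb{C}\{u\}$ is a domain with distinct nodes. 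As an alternative route one could instead take the partial-fraction expansion $\frac{\Psi(u,Y)}{F(u,Y)}=\sum_{i=1}^{n}\frac{A_i(u)}{Y-y_i(u)}$ with residues $A_i(u)=\frac{\Psi(u,y_i(u))}{\frac{\partial F}{\partial Y}(u,y_i(u))}$, then read off $\Psi_0(u)=\sum_i A_i(u)$ from the behaviour as $Y\to\infty$; this is the same computation viewed dually and offers no simplification, so I would present the interpolation version.
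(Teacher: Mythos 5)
Your proof is correct and is exactly the argument the paper intends: the paper's proof consists of the single sentence that the lemma ``follows immediately from the Lagrange interpolation formula,'' and you have simply written out that interpolation argument (over the fraction field of $\mathbb{C}\{u\}$, with $\frac{\partial F}{\partial Y}(u,y_i(u))=\prod_{j\neq i}(y_i(u)-y_j(u))$ and comparison of the coefficients of $Y^{n-1}$) in full detail.
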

\smallskip

\begin{proof}
The lemma follows immediately from the Lagrange interpolation formula.
\end{proof}
\smallskip

\noindent
\emph{Proof of Theorem} \ref{Noether's Theorem on the double-point divisor} (cf.\cite{Waer39}, Achtes Kapitel).\\
We may assume that $f_i=f_i(X,Y)$ are $Y$-distinguished polynomials and (after replacing $g$, $h$ by te rests of division by $f$) $g,h\in\mathbb{C}\{X\}[Y]$.\\
We have\\
$i_0(f_i,g)+c_i(f)=i_0(f_i,g)+\mu_0(f_i)+\sum_{j\neq i}{i_0(f_i,f_j)}=i_0(f_i,g)-i_0(f_i,X)+1+$\\
$+i_0\left(f_i,\frac{\partial f_i}{\partial Y}\right)+\sum_{j\neq i}{i_0(f_i,f_j)}=i_0(f_i,g)-i_0(f_i,X)+1+i_0\left(f_i,\frac{\partial f}{\partial Y}\right)$\\
by Teissier's lemma.\\
Let $n_i=i_0(f_i,X)$ for $i=1,\ldots,r$. 
The Noether's conditions are equivalent to 
\begin{enumerate}
\item[$(1)$] $i_0(f_i,h)\geq i_0(f_i,g)+i_0\left(f_i,\frac{\partial f}{\partial Y}\right)-n_i+1 \ \mbox{ for } \ i=1,\ldots,r$.
\end{enumerate}
By Puiseux' Theorem we can write
$$
f_i(t^{n_i},Y)=(Y-y_{i1}(t))\cdots(Y-y_{in_i}(t)) \ \mbox{ in } \ \mathbb{C}\{t\}[Y]
$$
where $y_{i1}(t),\ldots,y_{in_i}(t)$ are $\mathbb{C}\{t^{n_i}\}$-conjugate i. e. $y_{ij}(t)=y_{i1}(\varepsilon_j t)$ for some $\varepsilon_j$ such that $\varepsilon_j^{n_i}=1$.
Thus for every $h(X,Y)\in\mathbb{C}\{X,Y\}$: \\
$$
\mbox{ord}h(t^{n_i},y_{i1}(t))=\cdots=\mbox{ord}h(t^{n_i},y_{in_i}(t))=i_0(f_i,h)
$$
and we can rewrite $(1)$ in the form
\begin{enumerate}
\item[$(2)$] $\mbox{ord}h(t^{n_i},y_{ij}(t))\geq\mbox{ord}g(t^{n_i},y_{ij}(t))+\mbox{ord}\frac{\partial f}{\partial Y}(t^{n_i},y_{ij}(t))-n_i+1$\\[0.1cm]
or else
\item[$(3)$]  $\mbox{ord}(t^{n_i-1}h(t^{n_i},y_{ij}(t))\geq\mbox{ord}g(t^{n_i},y_{ij}(t))\frac{\partial f}{\partial Y}(t^{n},y_{ij}(t))$.\\
\end{enumerate}
Let $N=n_1\cdots n_r$ and $\overline{y}_{ij}(u)=y_{ij}(u^{N/n_i})$ for $i=1,\ldots,r$.
Obviously $\frac{N}{n_i}(n_i-1)\leq N-1$ therefore $(3)$ implies 
 \begin{enumerate}
\item[$(4)$] $\mbox{ord}(u^{N-1}h(u^N,\overline{y}_{ij}(u)))\geq\mbox{ord}g(u^{N},\overline{y}_{ij}(u))\frac{\partial f}{\partial Y}(u^{N},\overline{y}_{ij}(u))$
\end{enumerate}
and we can apply Lemma \ref{Lemma 2_12} to the polynomials
$$F(u,Y)=f(u^N,Y)=\prod{(Y-\overline{y}_{ij}(u))}, \ G(u,Y)=g(u^N,Y)$$
and $H(u,Y)=u^{N-1}h(u^N,Y).$\\
We get
$$
u^{N-1}H(u^N,Y)\in(f(u^N,Y),g(u^N,Y))\mathbb{C}\{u\}[Y].
$$
It is easy to check that $\mathbb{C}\{u\}[Y]=\sum_{i=0}^{N-1}{\mathbb{C}\{u^N\}Y^i}$ is a free $\mathbb{C}\{u^N\}[Y]$-module, so
$$
h(u^N,Y)\in(f(u^N,Y),g(u^N,Y))\mathbb{C}\{u\}[Y]
$$
and consequently $h(X,Y)\in(f(X,Y),g(X,Y)\mathbb{C}\{X\}[Y]$.\\
\rightline{$\square$}\\
\bigskip

\noindent
\emph{Proof of Theorem \ref{Theorem 2_11}}.\\
Suppose that there is a $\Psi=\Psi(X,Y)\in\mathbb{C}\{X,Y\}$ such that
 \begin{enumerate}
\item[$(5)$] $i_0(f,\Psi)=\mu_0(f)-1$.
\end{enumerate}
We may assume that $f=f(X,Y)$ is a $Y$-distinguished polynomial of degree $n\geq 1$ 
and $\Psi\in\mathbb{C}\{X\}[Y]$ a polynomial of $Y$-degree $\leq n-1$. 
By Teissier's lemma we can rewrite (5) in the form
 \begin{enumerate}
\item[$(6)$] $i_0(f,X\Psi)=i_0(f,\frac{\partial f}{\partial Y})$.
\end{enumerate}
By Puiseux' Theorem we have $f(u^n,Y)=\prod_{\varepsilon^n=1}{(Y-y(\varepsilon u))}$. \\
Then $(6)$ is equivalent to
\begin{enumerate}
\item[$(7)$] $\mbox{ord}u^n\Psi(u^n,y(u))=\mbox{ord}\frac{\partial f}{\partial Y}(u^n,y(u))$.
\end{enumerate}
By $(7)$ we can write $\mbox{in}(u^n\Psi(u^n,y(u)))=c_1u^N$  ($c_1\neq 0$)
and $\mbox{in}\frac{\partial f}{\partial Y}(u^n,y(u))=c_2u^N$ 
($c_2\neq 0$) 
where 
$N=i_0\left(f,\frac{\partial f}{\partial Y}\right)$.
Therefore we get
$$
\hspace*{-2.4cm}(8) \hspace{2cm} \mbox{in}\frac{u^n\Psi(u^n,y(\varepsilon u))}{\frac{\partial f}{\partial Y}(u^n,y(\varepsilon u))}=\frac{c_1\varepsilon^Nu^N}{c_2\varepsilon^Nu^N}=c, \quad c=\frac{c_1}{c_2}.
$$
On the other hand by Lemma \ref{Lemma 2_13} applied to $\Psi(u^n,Y)$ and $f(u^n,Y)$ we have
$$
\hspace*{-3.8cm}(9) \hspace{2cm} \sum_{\varepsilon^n=1}{\frac{u^n\Psi(u^n,y(\varepsilon u))}{\frac{\partial f}{\partial Y}(u^n,y(\varepsilon u))}}=u^n\Psi_0(u^n).
$$
A contradiction, because the left side of (9) is of order zero by (8).\\
\rightline{$\square$}
\bigskip

\noindent
In what follows we need \\

\begin{Lemma}
\label{Lemma 2_14}
Let $f\in\mathbb{C}\{X,Y\}$ be an irreducible power series. Then for any integer $a\in\mathbb{Z}$ there exists power series $\phi,\psi\in\mathbb{C}\{X,Y\}$ such that $a=i_0(f,\phi)-i_0(f,\psi)$.
\end{Lemma}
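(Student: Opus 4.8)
The plan is to produce, for each integer $a$, two series $\phi,\psi$ with $i_0(f,\phi)-i_0(f,\psi)=a$. The natural tool is the semigroup $\Gamma(f)$ of the branch $f=0$, which was introduced in Section~1 and consists precisely of the values $i_0(f,g)$ as $g$ runs over series not divisible by $f$. Since $\Gamma(f)$ is closed under addition and contains $0$, every value in $\Gamma(f)$ is realized as some $i_0(f,\phi)$. Thus if I can write $a=b-c$ with $b,c\in\Gamma(f)$, I am done: pick $\phi,\psi$ realizing $b,c$ respectively.

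So the problem reduces to showing that $\Gamma(f)-\Gamma(f)=\mathbb{Z}$, i.e.\ that the difference set of the semigroup is all of $\mathbb{Z}$. First I would observe that $\Gamma(f)$ contains two coprime elements. Indeed, taking $\phi=X$ and $\psi=Y$ (or suitable coordinates), the parametrization description from Section~1 gives that for a Puiseux parametrization $(t^n,y(t))$ the value $i_0(f,X)=n$ and $i_0(f,Y)=\operatorname{ord}y(t)=:m$ lie in $\Gamma(f)$, and the characterization $\gcd(n,\operatorname{supp}y(t))=1$ of a Puiseux parametrization forces $\gcd$ of the generators of $\Gamma(f)$ to be $1$. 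More directly, $\Gamma(f)=\mathbb{N}$ exactly when $f=0$ is nonsingular, and in every case $\gcd\Gamma(f)=1$ because the branch has a one-dimensional parametrization over $\mathbb{C}\{t\}$; if every intersection number were divisible by $d>1$ the parametrization could be reparametrized through $t\mapsto t^d$, contradicting goodness.

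Once I have $b,c\in\Gamma(f)$ with $\gcd(b,c)=1$, a classical elementary fact finishes the argument: any integer $a$ can be written as $a=\lambda b-\mu c$ with $\lambda,\mu\ge 0$. Concretely, choosing $\lambda,\mu\in\mathbb{Z}$ with $\lambda b-\mu c=a$ (possible by Bézout) and then adding a common multiple $kbc$ to both $\lambda$ and $\mu$ shifts them into the nonnegative range without changing $\lambda b-\mu c$. Since $\Gamma(f)$ is additively closed, $\lambda b\in\Gamma(f)$ and $\mu c\in\Gamma(f)$, and these are realized by $\phi=g_1$ and $\psi=g_2$ with $i_0(f,\phi)=\lambda b$, $i_0(f,\psi)=\mu c$; concretely one may take $\phi$ a product of $\lambda$ copies of a series realizing $b$ and $\psi$ a product of $\mu$ copies of one realizing $c$, using additivity of $i_0$ in the second argument.

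The main obstacle is establishing cleanly that $\Gamma(f)$ contains two coprime values, rather than the Bézout step, which is routine. The cleanest justification invokes the existence of a good (Puiseux) parametrization $(t^n,y(t))$ with $\gcd(n,\operatorname{supp}y(t))=1$ from Section~1: then $i_0(f,X)=n$ and, choosing a monomial $X^\alpha Y^\beta$ whose pullback $t^{n\alpha}y(t)^\beta$ has order coprime to $n$, I obtain a second generator coprime to $n$. I would take care to handle the nonsingular case separately, where $\Gamma(f)=\mathbb{N}$ and the statement is immediate with $b=1$.
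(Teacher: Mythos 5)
Your reduction of the lemma to the claim that $\Gamma(f)$ contains two coprime elements is sound, and the B\'ezout step (writing $a=\lambda b-\mu c$ with $\lambda,\mu\ge 0$ and using additivity of $i_0$) is routine and correct. The gap is that none of your justifications of the coprimality claim actually works, and that claim is the whole content of the lemma. Take the branch with Puiseux parametrization $(t^4,\,t^6+t^7)$; here $\gcd(4,\{6,7\})=1$, so the parametrization is good. Then $i_0(f,X)=4$ and $i_0(f,Y)=6$ are not coprime, so your first suggestion fails; and the pullback of any monomial $X^\alpha Y^\beta$ has order $4\alpha+6\beta$, which is always even, so no monomial produces an element of $\Gamma(f)$ coprime to $4$ either. (An element of odd order does exist --- $g=Y^2-X^3$ pulls back to $2t^{13}+t^{14}$, and in fact $\Gamma(f)=\langle 4,6,13\rangle$ --- but finding it requires cancellation of initial terms, which no monomial achieves.) Your remaining argument, that if every element of $\Gamma(f)$ were divisible by $d>1$ then the parametrization ``could be reparametrized through $t\mapsto t^d$,'' is circular as stated: divisibility of all the orders $\operatorname{ord}g(x(t),y(t))$ by $d$ controls only the orders of $x$ and $y$ themselves, not their higher terms, and making the implication precise is essentially equivalent to the fact you are trying to prove.

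The paper supplies exactly the missing ingredient by citation: for a good parametrization the rings $\mathbb{C}\{x(t),y(t)\}$ and $\mathbb{C}\{t\}$ have the same field of fractions (\cite{JoPf00}, Theorem 5.1.3). Hence $t^a=\phi(x(t),y(t))/\psi(x(t),y(t))$ for suitable $\phi,\psi\in\mathbb{C}\{X,Y\}$, and taking orders gives the lemma in one line; note that $\gcd\Gamma(f)=1$ then comes out as a corollary rather than going in as an input. If you want to keep your semigroup-theoretic route, you must either invoke that same fraction-field theorem or the structure theorem for the semigroup of a branch (generators $\overline\beta_0,\dots,\overline\beta_g$ with greatest common divisor $1$), neither of which is available for free at this point in the paper.
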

\smallskip

\begin{proof}
Let $(x(t),y(t))$ be a good parametrization of the branch  $f=0$. Then the rings $\mathbb{C}\{x(t),y(t)\}$ and $\mathbb{C}\{t\}$ have the same field of fractions (see \cite{JoPf00}, Theorem 5.1.3.). Then $t^a=\frac{\phi(x(t),y(t))}{\psi(x(t),y(t))}$ for some $\phi,\psi\in\mathbb{C}\{X,Y\}$ and taking orders gives $a=i_0(f,\phi)-i_0(f,\psi)$.
\end{proof}
\smallskip

\begin{Theorem}
\label{Theorem 2_15}
The semigroup $\Gamma(f)$ of the branch $f=0$ contains all integers greater than or equal to the Milnor number $\mu_0(f)$. 
The number $\mu_0(f)-1$ does not belong to $\Gamma(f)$.
\end{Theorem}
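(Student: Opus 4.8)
The plan is to read the two assertions together as the statement that $\mu_0(f)$ is the conductor of the value semigroup of the branch and $\mu_0(f)-1$ is its largest gap. Write $(x(t),y(t))$ for a good parametrization, put $R=\mathbb{C}\{x(t),y(t)\}\subseteq\overline{R}=\mathbb{C}\{t\}$ and $v=\mbox{ord}_t$, so that by the parametrization formula of Section 1 ($i_0(f,g)=\mbox{ord}\,g(x(t),y(t))$) we have $\Gamma(f)=v(R\setminus\{0\})$. Since $i_0$ and $\mu_0$ are invariant under local coordinate changes (Section 1 and Property 2.2), I would first pass to coordinates in which $X$ is transverse to $f=0$, so that after Weierstrass preparation $f$ is a $Y$-distinguished polynomial of degree $n=\mbox{ord}f=i_0(f,X)$ with Puiseux parametrization $(t^n,y(t))$. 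The second assertion is then immediate: if $\mu_0(f)-1$ belonged to $\Gamma(f)$ there would be $g$ with $i_0(f,g)=\mu_0(f)-1$, contradicting Theorem \ref{Theorem 2_11}.

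For the first assertion the key reduction is to prove the inclusion of $\overline{R}$-modules $t^{\mu_0(f)}\overline{R}\subseteq R$. This suffices, because for every integer $m\ge\mu_0(f)$ it gives $t^m=g(x(t),y(t))$ for some $g\in\mathbb{C}\{X,Y\}$ with $f\nmid g$, whence $m=v(t^m)=i_0(f,g)\in\Gamma(f)$. To establish this inclusion I would use complementary modules for the trace $\mbox{Tr}\colon\mathbb{C}((t))\to\mathbb{C}((t^n))$ of the extension determined by $X=t^n$: for a fractional $\mathbb{C}\{t^n\}$-lattice $M\subseteq\mathbb{C}((t))$ put $M^{\dagger}=\{w:\mbox{Tr}(wM)\subseteq\mathbb{C}\{t^n\}\}$, an operation that reverses inclusions.

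Two computations are then needed. First, the elementary codifferent of $\mathbb{C}\{t\}$ over $\mathbb{C}\{t^n\}$ gives $\overline{R}^{\dagger}=t^{-(n-1)}\overline{R}$. Second, since $R\cong\bigoplus_{i=0}^{n-1}\mathbb{C}\{t^n\}\,y(t)^i$ is free over $\mathbb{C}\{t^n\}$ with $y(t)$ having minimal polynomial $f(t^n,Y)$, the classical formula for the complementary module of a monogenic extension yields $R^{\dagger}=\bigl(\tfrac{\partial f}{\partial Y}\bigr)^{-1}R$, where $\tfrac{\partial f}{\partial Y}$ is evaluated at $(t^n,y(t))$; this identity is exactly the content of Lemma \ref{Lemma 2_13} (the trace of $Y^{i}/\tfrac{\partial f}{\partial Y}$ produces the basis dual to $1,\dots,Y^{n-1}$). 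From $R\subseteq\overline{R}$ and inclusion reversal I then get $t^{-(n-1)}\overline{R}=\overline{R}^{\dagger}\subseteq R^{\dagger}=\bigl(\tfrac{\partial f}{\partial Y}\bigr)^{-1}R$, hence $\tfrac{\partial f}{\partial Y}\,t^{-(n-1)}\overline{R}\subseteq R$. By Teissier's lemma $v\bigl(\tfrac{\partial f}{\partial Y}\bigr)=i_0\bigl(f,\tfrac{\partial f}{\partial Y}\bigr)=\mu_0(f)+i_0(f,X)-1=\mu_0(f)+n-1$, so $\tfrac{\partial f}{\partial Y}\,t^{-(n-1)}\overline{R}=t^{\mu_0(f)}\overline{R}$, which is the desired inclusion.

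The main obstacle is the second complementary-module identity $R^{\dagger}=\bigl(\tfrac{\partial f}{\partial Y}\bigr)^{-1}R$, i.e.\ the self-duality of the branch under the trace form, which encodes its Gorenstein property; it must be set up with care (freeness of $R$ over $\mathbb{C}\{t^n\}$, the fact that $\mbox{Tr}$ lands in $\mathbb{C}\{t^n\}$, and the reading of Lemma \ref{Lemma 2_13} as a dual-basis statement). By contrast, checking that $\dagger$ reverses inclusions, computing $\overline{R}^{\dagger}=t^{-(n-1)}\overline{R}$, and the final order count through Teissier's lemma are routine, as is the reduction to transverse coordinates; the degenerate case $n=1$ is handled uniformly since then $R=\overline{R}$ and $\mu_0(f)=0$.
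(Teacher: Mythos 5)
Your proof is correct, but it follows a genuinely different route from the paper's. For the first assertion the paper combines Lemma \ref{Lemma 2_14} (every integer $a$ is a difference $i_0(f,\phi)-i_0(f,\psi)$ of two values of the branch) with Noether's Theorem \ref{Noether's Theorem on the double-point divisor}: since for an irreducible $f$ the adjunction constant is $c_1(f)=\mu_0(f)$, the hypothesis $a\geq\mu_0(f)$ makes $\phi$ satisfy Noether's conditions relative to $\psi$, so $\phi=Af+B\psi$ and $a=i_0(f,B)\in\Gamma(f)$. You instead establish the conductor inclusion $t^{\mu_0(f)}\overline{R}\subseteq R$ directly by the Dedekind-style complementary-module computation ($\overline{R}^{\dagger}=t^{-(n-1)}\overline{R}$, $R^{\dagger}=(\partial f/\partial Y)^{-1}R$ via the dual-basis reading of Lemma \ref{Lemma 2_13}, inclusion reversal, and Teissier's lemma to evaluate $v(\partial f/\partial Y)$). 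The two arguments share the same computational kernel — the Lagrange/trace identities of Lemmas \ref{Lemma 2_12}--\ref{Lemma 2_13} and Teissier's lemma — but the paper's version is shorter because it recycles the already-proved Noether theorem, whereas yours bypasses Noether entirely and proves the a priori stronger statement that the whole $\overline{R}$-ideal $t^{\mu_0(f)}\mathbb{C}\{t\}$ lies in $R$ (i.e.\ that the conductor of the branch is at most $\mu_0(f)$), which together with the second assertion exhibits $\mu_0(f)$ as exactly the conductor. The one step you rightly flag as delicate, $R^{\dagger}=(\partial f/\partial Y)^{-1}R$, does hold here: $R$ is free over $\mathbb{C}\{t^n\}$ on $1,y(t),\dots,y(t)^{n-1}$ by Weierstrass division, the extension is separable in characteristic $0$, and Euler's formula for the dual basis of a monogenic extension applies verbatim. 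Your treatment of the second assertion (reduction to Theorem \ref{Theorem 2_11}) coincides with the paper's.
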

\smallskip

\begin{proof}
Let $a$ be an integer such that $a\geq \mu_0(f)$. By Lemma \ref{Lemma 2_14} we can write $a=i_0(f,\phi)-i_0(f,\psi)$ for some $\phi,\psi\in\mathbb{C}\{X,Y\}$. Then $i_0(f,\phi)=i_0(f,\psi)+a\geq i_0(f,\psi)+\mu_0(f)$ and by Noether's Theorem $\phi=Af+B\psi$ for some $A,B\in\mathbb{C}\{X,Y\}$. Thus $a=i_0(f,Af+B\psi)-i_0(f,\psi)=i_0(f,B)\in \Gamma(f)$ and we are done.\\
The second part of \ref{Theorem 2_15} follows immediately from Theorem \ref{Theorem 2_11}. \\ 
\end{proof}
\smallskip

\noindent Using Theorem \ref{Theorem 2_15} and Property \ref{2_4} $(ii)$ we get

\begin{Theorem}
\label{Theorem 2_16}
The Milnor number is an invariant of singularity.\\
\end{Theorem}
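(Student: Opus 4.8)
The plan is to unwind the definition of \emph{invariant}: a function on reduced curves is an invariant precisely when it takes equal values on equisingular curves, so the whole task is to show that if $f=0$ and $g=0$ are equisingular reduced curves then $\mu_0(f)=\mu_0(g)$. I would begin by fixing factorizations $f=f_1\cdots f_r$ and $g=g_1\cdots g_r$ into irreducible factors that realize the equisingularity, so that the equisingularity bijection $f_i\mapsto g_i$ satisfies $\Gamma(f_i)=\Gamma(g_i)$ for every $i$ and $i_0(f_i,f_j)=i_0(g_i,g_j)$ for all $i,j$. Since the curves are reduced, the factors are pairwise coprime and each Milnor number below is finite, so Property \ref{2_4} applies.

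The first and decisive step is to observe that the Milnor number of a single \emph{branch} is already determined by its semigroup. Indeed, Theorem \ref{Theorem 2_15} tells us that $\Gamma(f_i)$ contains every integer $\geq\mu_0(f_i)$ while $\mu_0(f_i)-1\notin\Gamma(f_i)$; together these two facts say that $\mu_0(f_i)$ is exactly the conductor of the semigroup $\Gamma(f_i)$, i.e. the least integer $c$ with $c+\mathbb{N}\subseteq\Gamma(f_i)$. As the conductor depends only on the underlying set $\Gamma(f_i)$, the equality $\Gamma(f_i)=\Gamma(g_i)$ immediately yields $\mu_0(f_i)=\mu_0(g_i)$ for each $i=1,\ldots,r$.

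The second step passes from the branches to the whole curve via the additivity formula. Applying Property \ref{2_4}$(ii)$ to both factorizations gives
$$\mu_0(f)=1-r+\sum_{i=1}^{r}\mu_0(f_i)+2\sum_{1\leq i<j\leq r}i_0(f_i,f_j),$$
and the analogous identity for $g$. Comparing the two expressions term by term, every summand agrees: the number $r$ is common by the definition of equisingularity, $\mu_0(f_i)=\mu_0(g_i)$ by the first step, and $i_0(f_i,f_j)=i_0(g_i,g_j)$ by hypothesis. Hence $\mu_0(f)=\mu_0(g)$, which is the asserted invariance.

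I do not expect a genuine obstacle, since both ingredients are already established; the single point that deserves careful statement is the reinterpretation of Theorem \ref{Theorem 2_15} as the assertion that $\mu_0$ of a branch \emph{equals the conductor of its semigroup}. That observation is what converts $\Gamma(f_i)=\Gamma(g_i)$ into $\mu_0(f_i)=\mu_0(g_i)$ and reduces everything else to a term-by-term comparison in the formula of Property \ref{2_4}$(ii)$.
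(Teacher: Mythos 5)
Your proposal is correct and follows exactly the route the paper intends: the paper's proof is the one-line remark ``Using Theorem \ref{Theorem 2_15} and Property \ref{2_4} $(ii)$ we get,'' and your two steps --- reading Theorem \ref{Theorem 2_15} as saying that $\mu_0$ of a branch is the conductor of its semigroup, then comparing the additivity formulas term by term --- are precisely the details being left to the reader. Nothing is missing.
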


\noindent
\textbf{Notes}\\
Milnor introduced and studied $\mu$ in the general case of isolated hipersurface
singularities in his celebrated book \cite{Mil68}. 
A topological treatment of the Milnor number in the case of plane curve singularities is given in \cite{Wall04}.
The invariant $\delta$ was defined in algebraical terms by Hironaka in \cite{Hi57}. 
The formula $2\delta=\mu + r - 1$which served in our approach as definition of $\delta$ was proved in \cite{Mil68}
by topological methods and in \cite{Ri71} on an algebraic way. 
The classical texts \cite{Ju23} and \cite{Waer39} where the Milnor number is implicit were very helpful
when writing this article.
Teissier's lemma has interesting generalizations involving the jacobian 
(see the articles by L\^{e} Dung Trang and Greuel quoted in \cite{Te76}). 
For application of the Milnor number to singularities of plane algebraic curves see \cite{GwPl01}, \cite{Mas01}
and the references given therein.\\

\section{Newton diagrams and power series}
\label{s3}

Let $\mathbb{R}_+=\{a\in\mathbb{R}\colon a\geq 0\}$. 
For any subsets $E,F\subset\mathbb{R}_+^2$ we consider the 
\emph{Minkowski sum} $E+F=\{u+v\colon u\in E \mbox{ and } v\in F\}$.
Let $E\subset\mathbb{N}^2$ and let us denote by $\Delta(E)$ the convex hull of the set $E+\mathbb{R}_+^2$. 
A subset $\Delta\subset\mathbb{R}_+^2$ is a \emph{Newton diagram} (or \emph{polygon}) 
if there is a set $E\subset \mathbb{N}^2$ such that $\Delta=\Delta(E)$. 
The smallest set $E_0\subset\mathbb{N}^2$ such that $\Delta=\Delta(E_0)$ 
is called the set of \emph{vertices} of the \emph{Newton diagram} $\Delta$. 
It is always finite and we can write $E_0=\{v_0,v_1,\ldots,v_m\}$ 
where $v_i=(\alpha_i,\beta_i)$ and $\alpha_{i-1}<\alpha_i$, $\beta_{i-1}>\beta_i$ for all $i=1,\ldots,m$. 
The Newton diagram with one vertex $v=(\alpha,\beta)$ is the quadrant  $(\alpha,\beta)+\mathbb{R}_+^2$. 
According to Teissier (see \cite{Te76}, \cite{Te91}) for two positive integers $a,b>0$ we denote by 
{\scriptsize
$
\left\{
{\setlength\arraycolsep{2pt}
\begin{array}{c}
a\\
\hline\hline
b
\end{array}}
\right\}
$}
the Newton diagram with vertices $(0,b)$ and $(a,0)$.
We denote also 
{\scriptsize
$
\left\{
{\setlength\arraycolsep{2pt}
\begin{array}{c}
a\\
\hline\hline
\infty
\end{array}}
\right\}
$}
resp.
{\scriptsize
$
\left\{
{\setlength\arraycolsep{2pt}
\begin{array}{c}
\infty\\
\hline\hline
b
\end{array}}
\right\}
$}
the quadrant with vertex $(a,0)$ resp. $(0,b)$.\\
\begin{figure}[!h]
\centering
\scalebox{0.3}[0.3]{\includegraphics{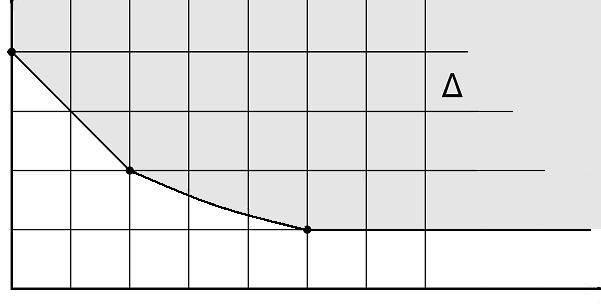}} 
\caption{}
\label{Figure 1}
\end{figure}

\noindent
We call a segment $E\subset\mathbb{R}_+^2$ Newton's edge if its vertices $(\alpha,\beta)$, $(\alpha',\beta')$ lie in $\mathbb{N}^2$ and $\alpha<\alpha'$, $\beta'<\beta$. 
We put $|E|_1=\alpha'-\alpha$ and $|E|_2=\beta-\beta'$ and call $|E|_1/|E|_2$ the inclination of $E$.
We denote by $a(E)$ and $b(E)$ the distances of $E$ to the vertical and horizontal axes respectively.\\
\begin{figure}[!h]

\centering
\scalebox{0.3}[0.3]{\includegraphics{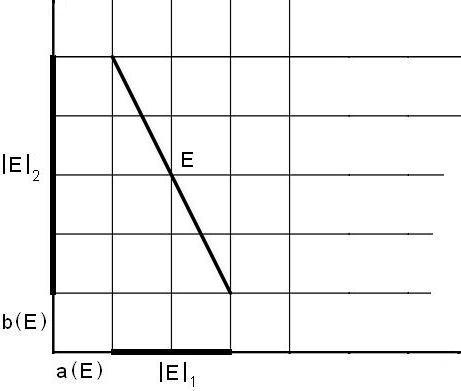}} 
\caption{}
\label{Figure 2}
\end{figure}

\noindent
The vertices of Newton's edge $E$ are $(a(E),|E|_2+b(E))$ and $(a(E)+|E|_1,b(E))$.
For any Newton diagram $\Delta$ we consider the set $\mbox{\Large{\emph{n}}}(\Delta)$ 
of $1$-dimensional compact faces of the boundary of $\Delta$.
Note that $\mbox{\Large{\emph{n}}}(\Delta)=\emptyset$ if and only if $\Delta$ is a quadrant. 
If $\Delta$ has vertices $v_0,\ldots,v_m$ $(m>0)$ then $\mbox{\Large{\emph{n}}}(\Delta)=\{E_1,\ldots,E_m\}$ 
where $E_i$ is the edge with vertices $v_{i-1}, v_i$.

Let $a(\Delta)$ and $b(\Delta)$ denote the distances of $\Delta$ to the vertical and horizontal axes respectively. 
The diagram is \emph{convenient} if $a(\Delta)=b(\Delta)=0$. 
The reader will check the following two properties of Newton diagrams.\\

\begin{Property}
\label{Property 3_1}
The Newton diagrams form a semigroup with respect to the Minkowski sum. For any Newton diagram $\Delta$ we have the \textbf{minimal decomposition} 
$$
(*) \hspace*{2cm} \Delta =
{\scriptsize
\left\{
{\setlength\arraycolsep{2pt}
\begin{array}{c}
a(\Delta)\\
\hline\hline
\infty
\end{array}}
\right\}}
+
\sum_{S\in\mbox{\large{n}}(\Delta)}
{
{\scriptsize
\left\{
{\setlength\arraycolsep{2pt}
\begin{array}{c}
|S|_1\\
\hline\hline
|S|_2
\end{array}}
\right\}}
+
{\scriptsize
\left\{
{\setlength\arraycolsep{2pt}
\begin{array}{c}
\infty\\
\hline\hline
b(\Delta)
\end{array}}
\right\}}
}.
$$
\end{Property}
\smallskip

\begin{Property}
\label{Property 3_2}
The line with the slope $-1/\theta$ $(\theta>0)$ supporting the Newton diagram $\Delta$ 
with the minimal decomposition $(*)$ intersects the horizontal axis in the point with abscissa
$$
a(\Delta) +
\sum_{S\in\mbox{\large{n}}(\Delta)}
{
\inf\{|S|_1,\theta|S|_2\}+\theta b(\Delta).
}
$$
\end{Property}
\smallskip

\begin{figure}[!h]

\centering
\scalebox{0.3}[0.3]{\includegraphics{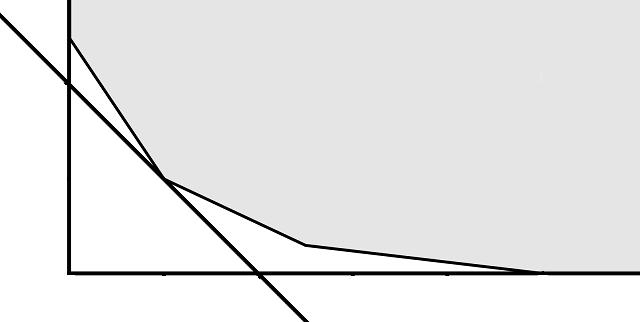}} 
\caption{\ $\alpha+\theta\beta=\nu$}
\label{Figure 3}
\end{figure}

\noindent
For any nonzero power series $f=\sum{c_{\alpha\beta}X^\alpha Y^\beta\in\mathbb{C}\{X,Y\}}$ we put 
$\Delta(f)=\Delta(\mbox{supp}f)$ and  $\mathcal{N}(f)=\mbox{\Large\emph{n}}(\Delta(f))$. 
We call $\Delta(f)$ the Newton diagram of $f$. 
The following property is basic\\

\begin{Property}
\label{Property 3_3}
For any nonzero power series $f,g\in\mathbb{C}\{X,Y\}$:
$$
\Delta(fg)=\Delta(f)+\Delta(g)
$$
\end{Property}
\smallskip

\noindent
For the proof of \ref{Property 3_3} we refer the reader to \cite{Lip88}.\\

In particular if $g=f\cdot \mbox{unit}$ then $\Delta(g)=\Delta(f)$ since $\Delta(u)=\mathbb{R}^2_+$
(the zero of the semigroup of Newton's diagrams) 
if $u(0)\neq 0$ and we may speak about the Newton diagram of the local curve $f=0$.
A \emph{power series} $f$ is \emph{convenient} if the diagram $\Delta(f)$ is convenient.
Obviously $f$ is convenient if and only if the branches of the curve $f=0$ are different from axes.

Observe that $\mbox{\Large\emph{n}}(f)=\emptyset$ if and only if $f=X^{\alpha_0}Y^{\beta_0}\cdot \mbox{unit}$.
Suppose that $\mbox{\Large\emph{n}}(f)\neq \emptyset$. For any face $S\in\mbox{\Large\emph{n}}(f)$ we consider the initial part $\mbox{in}(f,S)$ of $f$ corresponding to $S$:
$$
\mbox{in}(f,S)=\sum_{(\alpha,\beta)\in S}{c_{\alpha\beta}X^\alpha Y^\beta}
$$
Note that $\mbox{\Large\emph{n}}(\mbox{in}(f,S))=S$. If $a(S)$ and $b(S)$ are the distances of $S$ to the axes then
$X^{a(S)}Y^{b(S)}$ is the monomial of maximal degree dividing $\mbox{in}(f,S)$.
Let $r(S)=\mbox{gcd}(|S|_1,|S|_2)$.
Then $r(S)=\#(S\cap\mathbb{N}^2)-1$.
Let $m_S=|S|_1/r(S)$, $n_S=|S|_2/r(S)$. 
It is easy to check that 
$$
\mbox{in}(f,S)=X^{a(S)}Y^{b(S)}\Phi_S(X^{m_S},Y^{n_S})
$$
where $\Phi_S(U,V)\in\mathbb{C}[U,V]$ is a homogeneous form of degree $r(S)$ such that $\Phi_S(U,0)\Phi_S(0,V)\neq 0$ in $\mathbb{C}[U,V]$.
Therefore we may write
$$
\mbox{in}(f,S)=cX^{a(S)}Y^{b(S)}\prod_{i=1}^{r}{(Y^{n_S}-a_iX^{m_S})^{d_i}}
$$
where $a_i\neq a_j$ for $i\ne j$, $c\ne 0$ are constants.\\

We put $r(f,S)=r$. Since $r(S)=\sum_{i=1}^{r}{d_i}$ we have $r(f,S)\leq r(S)$ with equality if and only if $d_1=\ldots=d_r=1$.
We say that $f$ is \emph{nondegenerate on $S$} if $r(f,S)=r(S)$.
A power series $f$ is nondegenerate on $S$ if and only if the system of equations
$\frac{\partial}{\partial X}\textrm{in}(f,S)=\frac{\partial}{\partial Y}\textrm{in}(f,S)=0$
has no solutions in $(\mathbb{C}\setminus\{0\})\times(\mathbb{C}\setminus\{0\})$.
The \emph{power series $f$} is \emph{nondegenerate} if it is nondegenerate on each $S\in\mbox{\Large\emph{n}}(f)$.
A binomial curve $Y^n-aX^m=0$, $\mbox{gcd}(n,m)=1$, $a\neq 0$ will be called a \emph{quasi-tangent to $f=0$} of (\emph{tangential}) \emph{multiplicity} $d$ if $Y^n-aX^m$ is a factor of multilicity $d$ of an initial form $\mbox{in}(f,S)$. 
We say that $Y^{n_S}-a_iX^{m_S}=0$, $i=1,\ldots,r$ are quasi-tangents to $f=0$ corresponding to the face $S\in\mbox{\Large\emph{n}}(f)$.\\

\begin{Remark}
\label{Remark 3_4}
\emph{
If $\mbox{ord}f(X,0)=\mbox{ord}f(0,Y)=\mbox{ord}f$ (this condition means that the axes $Y=0$ and $X=0$ are not tangent to the curve $f=0$) then
$
\Delta(f)=
{\scriptsize
\left\{
{\setlength\arraycolsep{2pt}
\begin{array}{c}
\mbox{ord}f\\
\hline\hline
\mbox{ord}f
\end{array}}
\right\}}
$
and the initial form corresponding to the unique face of $\Delta(f)$ is $\mbox{in}f=c\prod_{i=1}^{r}{(Y-a_iX)^{d_i}}$
where $a_i\neq a_j$ for $i\ne j$.
In this case the quasi-tangents to $f=0$ are usual tangents $Y-a_iX=0$, $i=1,\ldots,r$.
}
\end{Remark}
\smallskip

\begin{Remark}
\emph{
If $f\in\mathbb{C}\{X,Y\}$ is a convenient power series then the local curve $f=0$ has exactly one quasi-tangent
if and only if
$$
f=c(Y^n-aX^m)^d+\sum{c_{\alpha\beta}X^\alpha Y^\beta}, \quad \mbox{gcd}(n,m)=1
$$
where the summation is over $(\alpha,\beta)$ such that $\alpha n+\beta m>dmn$.\\
We have 
$\Delta(f)=
{\scriptsize
\left\{
{\setlength\arraycolsep{2pt}
\begin{array}{c}
i_0(f,Y)\\
\hline\hline
i_0(f,X)
\end{array}}
\right\}}=
{\scriptsize
\left\{
{\setlength\arraycolsep{2pt}
\begin{array}{c}
md\\
\hline\hline
nd
\end{array}}
\right\}}
$.
}
\label{Remark 3_5}
\end{Remark}
\medskip

\begin{Remark}
\emph{
Let $\mbox{mult}(f,\tau)$ be the tangential multiplicity of the quasi-tangent $\tau$
to the local curve $f=0$. 
We put $\mbox{mult}(f,\tau)=0$ if a binomial curve $\tau$ is not a quasi-tangent to $f=0$.
Then we have for any nonzero power series $f,g$:
$\mbox{mult}(fg,\tau)=\mbox{mult}(f,\tau)+\mbox{mult}(g,\tau)$.
}
\label{Remark 3_6}
\end{Remark}
\medskip

\noindent
\textbf{Notes}\\
An interesting algebra of the Newton diagrams is developed in \cite{Te76}.
Newton introduced his diagrams to solve equations $f(X,Y)=0$ (see \cite{BriKn86}).
The notion of nondegeneracy appeared in a very general setting in \cite{Kou76}, \cite{Kh77}, \cite{Va76}.
The authors are responsible for the term "quasi-tangent".\\

\section{Newton transformations and factorization of power series}
\label{sec: 4.Newton's transformations and factorization of Power series}

Let $n,m>0$ be coprime integers and let $c\neq 0$ be a complex number. 
The Newton transformation (in short: the N-transformation) is defined by the following equations\\[0.1cm]
$
(10)\hspace{1cm}
\begin{array}{l}
X=X_1^n,\\[0.1cm]
Y=(c+Y_1)X_1^m
\end{array}
$\\[0.1cm]
where $(X_1,Y_1)$ are new variables.\\
The N-transformation (10) may be viewed as a deformation of the parametrization\\[0.1cm]
$
(11)\hspace{1cm}
\begin{array}{l}
X=X_1^n,\\[0.1cm]
Y=cX_1^m
\end{array}
$\\[0.1cm]
of the binomial curve $Y^n-c^nX^m=0$.\\
We omit the simple proof of the following
\medskip

\begin{Lemma}
\label{Lemma 4_1}
Let $f=f(X,Y)\in\mathbb{C}\{X,Y\}$ be a nonzero power series without constant term. 
Then there is a unique power series $f_1=f_1(X_1,Y_1)\in\mathbb{C}\{X_1,Y_1\}$ and an integer $k>0$ such that
$$
f(X_1^n,(c+Y_1)X_1^m)=X_1^kf_1(X_1,Y_1), \ f_1(0,Y_1)\neq 0
$$
in $\mathbb{C}\{X_1,Y_1\}$.\\

The line $\alpha n+\beta m=k$ is a supporting line of $\Delta(f)$.
Moreover, the series $f_1$ is without constant term if and only if the curve $Y^n-c^nX^m=0$ is a quasi-tangent
to curve $f=0$.
Its tangential multiplicity equals $i_0(f_1,X_1)=\mbox{ord}(f_1(0,Y_1))$.\\
\end{Lemma}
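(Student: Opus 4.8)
The plan is to substitute the $N$-transformation (10) directly into $f$ and to read off every assertion from the resulting expansion. Writing $f=\sum c_{\alpha\beta}X^\alpha Y^\beta$, the substitution gives
\[
f(X_1^n,(c+Y_1)X_1^m)=\sum_{(\alpha,\beta)\in\mathrm{supp}f}c_{\alpha\beta}(c+Y_1)^\beta X_1^{\,n\alpha+m\beta},
\]
so the power of $X_1$ attached to the point $(\alpha,\beta)$ is the value $\ell(\alpha,\beta)=n\alpha+m\beta$ of a linear form with positive coefficients. I would set $k=\min\{\ell(\alpha,\beta):(\alpha,\beta)\in\mathrm{supp}f\}$; since $f$ has no constant term $(0,0)\notin\mathrm{supp}f$, so $k\geq\min(n,m)>0$, and dividing by $X_1^k$ defines the candidate $f_1$. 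Uniqueness is immediate: if $X_1^kf_1=X_1^{k'}g_1$ with $f_1(0,Y_1),g_1(0,Y_1)\neq0$, then cancellation in the domain $\mathbb{C}\{X_1,Y_1\}$ together with setting $X_1=0$ forces $k=k'$ and $f_1=g_1$. For the supporting-line claim I note that, since $n,m>0$, adding $\mathbb{R}_+^2$ only increases $\ell$, so $\ell$ attains its minimum over $\Delta(f)=\Delta(\mathrm{supp}f)$ already on $\mathrm{supp}f$ and the line $\{\ell=k\}$ supports $\Delta(f)$. Let $S$ be the face it cuts out.

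The nonvanishing $f_1(0,Y_1)\neq0$ is the first point needing a real argument. Setting $X_1=0$ keeps only the monomials with $\ell=k$, giving $f_1(0,Y_1)=\sum_{(\alpha,\beta)\in S}c_{\alpha\beta}(c+Y_1)^\beta$. Distinct points of $S$ have distinct $\beta$-coordinates (as $n>0$), so the point of largest $\beta$ contributes the unique top-degree term of $f_1(0,Y_1)$ in $Y_1$, with nonzero coefficient $c_{\alpha\beta}$; hence $f_1(0,Y_1)$ is not identically zero.

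For the remaining assertions I would compute $f_1(0,Y_1)$ in closed form. When $S$ is a single vertex, $\mathrm{in}(f,S)$ is a monomial and $f_1(0,Y_1)=c_{\alpha_0\beta_0}(c+Y_1)^{\beta_0}$ has order $0$, consistent with there being no quasi-tangent; the substantive case is when $S\in\mathcal{N}(f)$ is an edge. Its inclination is $m/n$, so $\gcd(n,m)=1$ forces $m_S=m$, $n_S=n$, and the factored initial form reads
\[
\mathrm{in}(f,S)=cX^{a(S)}Y^{b(S)}\prod_{i=1}^{r}(Y^{n}-a_iX^{m})^{d_i}.
\]
Since only the monomials on $S$ survive division by $X_1^k$, one has $f_1(0,Y_1)=X_1^{-k}\,\mathrm{in}(f,S)(X_1^n,(c+Y_1)X_1^m)$, and substituting (10) turns each binomial factor into $(c+Y_1)^n-a_i$ times $X_1^{mn}$, yielding
\[
f_1(0,Y_1)=c\,(c+Y_1)^{b(S)}\prod_{i=1}^{r}\big((c+Y_1)^n-a_i\big)^{d_i}.
\]
Evaluating at $Y_1=0$ shows $f_1(0,0)=0$ exactly when $c^n=a_i$ for some $i$, i.e. when $Y^n-c^nX^m$ is a factor of $\mathrm{in}(f,S)$, which is precisely the quasi-tangency condition. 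Finally, as $c\neq0$ the factors $(c+Y_1)^{b(S)}$ and $(c+Y_1)^n-a_i$ with $a_i\neq c^n$ are units at $Y_1=0$, while $(c+Y_1)^n-c^n=nc^{n-1}Y_1+\cdots$ has order $1$; reading off the order gives $\mathrm{ord}\,f_1(0,Y_1)=d_i=d$, the tangential multiplicity. Since $(0,t)$ is a good parametrization of the branch $X_1=0$ and $X_1\nmid f_1$, property $4$ of the intersection number gives $i_0(f_1,X_1)=\mathrm{ord}\,f_1(0,Y_1)$, closing the chain of equalities.

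The main obstacle is the exponent bookkeeping linking $k$ to the geometry of $S$: one must verify that substituting (10) into $\mathrm{in}(f,S)$ produces the $X_1$-exponent $na(S)+mb(S)+mn\,r(S)$ and that this equals $k=\ell$ evaluated at a vertex of $S$ (using $|S|_1=m\,r(S)$). Aligning $m_S=m$, $n_S=n$ and $\sum_i d_i=r(S)$ correctly is where an error would most easily creep in; once this is pinned down, every claim is a direct reading of the closed-form expansion.
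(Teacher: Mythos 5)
Your argument is correct and complete: the direct substitution, the identification of $k$ as the minimum of $n\alpha+m\beta$ on $\mathrm{supp}f$, the observation that points of the face $S$ have distinct $\beta$-coordinates (giving $f_1(0,Y_1)\neq 0$), and the closed-form $f_1(0,Y_1)=c_0(c+Y_1)^{b(S)}\prod_i((c+Y_1)^n-a_i)^{d_i}$ with the exponent check $na(S)+mb(S)+mn\,r(S)=k$ together yield every assertion of the lemma. The paper explicitly omits this proof as simple, and what you have written is precisely the intended computation; the only cosmetic blemish is the reuse of the letter $c$ both for the constant of the N-transformation and for the constant in the factorization of $\mathrm{in}(f,S)$.
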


In what follows we call $f_1=f_1(X_1,Y_1)$ the strict transform of the series $f=f(X,Y)$ by the N-transformation (10).\\

The lemma below gives a necessary condition for a power series to be irreducible.
\smallskip

\begin{Lemma}
\label{Lemma 4_2}
Let $f=f(X,Y)\in\mathbb{C}\{X,Y\}$ be a convenient irreducible power series. 
Then the local curve $f=0$ has exactly one quasi-tangent.
\end{Lemma}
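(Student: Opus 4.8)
The plan is to read off the entire tangential structure of $f$ from a Puiseux parametrization of the branch $f=0$, exploiting that all conjugate roots share the same leading exponent. Since $f$ is convenient, neither axis is a component of $f=0$, so $n=i_0(f,X)=\mbox{ord}f(0,Y)$ and $m=i_0(f,Y)=\mbox{ord}f(X,0)$ are finite; these are exactly the intercepts of $\Delta(f)$ on the two axes, so $(0,n)$ and $(m,0)$ belong to $\mbox{supp}f$ and are its extreme vertices. Because $f$ is irreducible and $f(0,Y)\neq 0$, a Weierstrass preparation lets me assume $f$ is a distinguished polynomial of $Y$-degree $n$ (this changes $f$ only by a unit, hence preserves $\Delta(f)$ and the quasi-tangents), and by Puiseux' Theorem I may write
$$
f(t^n,Y)=\prod_{\varepsilon^n=1}\bigl(Y-y(\varepsilon t)\bigr),\qquad y(t)\in\mathbb{C}\{t\},\ \mbox{ord}y(t)=m,
$$
where every conjugate root $y(\varepsilon t)$ has order $m$ with leading term $c\,\varepsilon^{m}t^{m}$ for one fixed constant $c\neq 0$.

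First I would show that $\mathcal{N}(f)$ consists of a single edge. The coefficient of $Y^{n-k}$ in the product above is, up to sign, the $k$-th elementary symmetric function of the $y(\varepsilon t)$, hence a sum of products of $k$ of these series; as each factor has $t$-order $m$, this coefficient has order $\geq km$. Translating back through $X=t^n$, every $(\alpha,\beta)\in\mbox{supp}f$ satisfies $n\alpha+m\beta\geq nm$, so the line $n\alpha+m\beta=nm$ through $(0,n)$ and $(m,0)$ supports $\Delta(f)$ with all support lying on or above it. Since both endpoints are attained, $\mathcal{N}(f)=\{S\}$ is the single segment $S$ joining $(0,n)$ and $(m,0)$.

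Next I would identify $\mbox{in}(f,S)$. Assigning $Y$ the weight $m$ and $t$ the weight $1$, the weighted-initial part of each factor is $Y-c\,\varepsilon^{m}t^{m}$, so the weighted-initial part of the product is $\prod_{\varepsilon^n=1}\bigl(Y-c\,\varepsilon^{m}t^{m}\bigr)$. Putting $g=\mbox{gcd}(n,m)$, the map $\varepsilon\mapsto\varepsilon^{m}$ hits each $(n/g)$-th root of unity exactly $g$ times, whence
$$
\mbox{in}(f,S)=\Bigl(Y^{n/g}-c^{\,n/g}X^{m/g}\Bigr)^{g},
$$
a power of the single binomial $Y^{n_S}-aX^{m_S}$ with $n_S=n/g$, $m_S=m/g$, $\mbox{gcd}(n_S,m_S)=1$. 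Thus over the unique face $S$ there is exactly one root, i.e. $r(f,S)=1$, so $f=0$ has exactly one quasi-tangent, namely $Y^{n/g}-c^{\,n/g}X^{m/g}=0$.

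I expect the only genuine content to be these two collapses: the symmetric-function order estimate that forces $\Delta(f)$ down to one edge, and the root-of-unity count that forces $\mbox{in}(f,S)$ to a single binomial power; the rest is bookkeeping with convenience and the dictionary between $\Delta(f)$ and the parametrization. The point to state with care is that all $n$ conjugate roots have the same order $m$ and leading coefficients differing only by $\varepsilon^{m}$ — this is exactly where irreducibility (the single Puiseux orbit) enters and what forbids a second quasi-tangent.
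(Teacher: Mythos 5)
Your proof is correct and follows essentially the same route as the paper: Weierstrass preparation plus Puiseux's theorem, the observation that all $n$ conjugate roots share order $m$ (forcing $\Delta(f)$ to a single edge via the symmetric-function estimate, which is the paper's bound $\mbox{ord}\,a_i\geq iM/N$), and the identification of the initial form as a power of one binomial. Your root-of-unity count making $\mbox{in}(f,S)=(Y^{n/g}-c^{n/g}X^{m/g})^{g}$ explicit is just a more detailed rendering of the paper's final step.
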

\smallskip

\begin{proof}
Let $N=\mbox{ord}f(0,Y)$, $M=\mbox{ord}f(X,0)$.
By the Weierstrass Preparation Theorem we have 
$f=(Y^N+a_1(X)Y^{N-1}+\cdots+a_N(X))\cdot$unit. 
By Puiseux' Theorem $Y^N+a_1(t^N)Y^{N-1}+\cdots+a_N(t^N)=\prod_{\varepsilon^N=1}{(Y-y(\varepsilon t))}$
where $y(t)\in\mathbb{C}\{t\}$.
A simple calculation shows that $\mbox{ord}a_i(X)\geq i\frac{M}{N}$ with equality for $i=N$.
Therefore we have 
$
\Delta(f)=
{\scriptsize
\left\{
{\setlength\arraycolsep{2pt}
\begin{array}{c}
M\\
\hline\hline
N
\end{array}}
\right\}}
$.
Let $I=\{i\in[1,N]\colon \mbox{ord}a_i=i\frac{M}{N}\}$.
Then the initial form of $f$ corresponding to the unique face of $\Delta(f)$ is equal to $\mbox{const.}(Y^N+\sum_{i\in I}\mbox{in}a_i(X)Y^{N-i})=\mbox{const.}(Y^n-c^nX^m)^d$ where $N=nd$, $M=md$ and $\mbox{in}y(t)=ct^M$.
This proves the lemma.
\end{proof}
\smallskip

We can use the N-transformations to decide in a finite number of steps if a power series is irreducible.
\smallskip

\begin{Lemma}
\label{Lemma 4_3}
Suppose that $f=f(X,Y)\in\mathbb{C}\{X,Y\}$ is a convenient power series such that the curve $f=0$ 
has exactly one quasi-tangent $Y^n-c^nX^m=0$.
Let $f_1=f_1(X_1,Y_1)\in\mathbb{C}\{X_1,Y_1\}$ be the strict transform of $f=f(X,Y)$ by the N-transformation $(10)$.
Then $f$ is irreducible if and only if $f_1$ is irreducible.
\end{Lemma}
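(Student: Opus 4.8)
The plan is to reduce irreducibility to the existence of a single branch and to transport branches across the Newton transformation by lifting good parametrizations. I would first record how the strict transform behaves under products. If $f=g_1\cdots g_r$ is the factorization of $f$ into irreducible factors, then substituting $(10)$ into each $g_i$ and invoking the uniqueness in Lemma \ref{Lemma 4_1} gives $f_1=g_{1,1}\cdots g_{r,1}$, where $g_{i,1}$ is the strict transform of $g_i$. Each $g_i$ is convenient (its branch is a branch of $f$, hence distinct from the axes), so by Lemma \ref{Lemma 4_2} it has exactly one quasi-tangent; by additivity of tangential multiplicities (Remark \ref{Remark 3_6}) and the hypothesis that $f$ has the single quasi-tangent $Y^n-c^nX^m=0$, this must be the quasi-tangent of every $g_i$. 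Lemma \ref{Lemma 4_1} then shows each $g_{i,1}$ has no constant term, so every $g_{i,1}$ is a non-unit.

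This already settles one implication: if $f$ is reducible, then $r\geq 2$ and $f_1=g_{1,1}\cdots g_{r,1}$ is a product of at least two non-units, hence reducible. Contrapositively, $f_1$ irreducible forces $f$ irreducible.

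For the converse I would lift the Puiseux parametrization. Put $N=i_0(f,X)$, $M=i_0(f,Y)$; as in the proof of Lemma \ref{Lemma 4_2} one has $N=nd$ and $M=md$ with $d$ the tangential multiplicity, and a good parametrization $(t^N,y(t))$ with $\mathrm{ord}\,y(t)=M$ and $\mathrm{in}\,y(t)=ct^M$. Setting $X_1(t)=t^d$ and $Y_1(t)=t^{-M}y(t)-c$ (a series of positive order, the leading term $ct^M$ of $y(t)$ being cancelled), the defining relation $f(X_1^n,(c+Y_1)X_1^m)=X_1^k f_1$ gives immediately $f_1(t^d,Y_1(t))=0$, so this pair parametrizes a branch of $f_1=0$.

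The main obstacle is to check that this lifted parametrization is good and that $f_1$ carries no further factors. For goodness I would use the Puiseux criterion: $(t^d,Y_1(t))$ is a Puiseux parametrization iff $\gcd(d,\mathrm{supp}\,Y_1(t))=1$. Since $\mathrm{supp}\,Y_1(t)=\{e-md : e\in\mathrm{supp}\,y(t),\ e>M\}$ and $md\equiv 0\pmod{d}$, any common divisor of $d$ and $\mathrm{supp}\,Y_1(t)$ also divides $M=md$ and $N=nd$ and every element of $\mathrm{supp}\,y(t)$, hence divides $\gcd(N,\mathrm{supp}\,y(t))=1$ (the parametrization $(t^N,y(t))$ being good); thus the lift is good and defines an irreducible branch, which divides $f_1$. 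Finally, that branch meets $X_1$ with multiplicity $\mathrm{ord}\,X_1(t)=d$, while Lemma \ref{Lemma 4_1} gives $i_0(f_1,X_1)=\mathrm{ord}\,f_1(0,Y_1)=d$. Because $f_1(0,Y_1)\neq 0$, every factor $h$ of $f_1$ satisfies $i_0(h,X_1)=\mathrm{ord}\,h(0,Y_1)$, which is positive unless $h$ is a unit; an extra non-unit factor would therefore force $i_0(f_1,X_1)>d$, a contradiction. Hence $f_1$ agrees with the branch up to a unit and is irreducible.
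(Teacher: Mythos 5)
Your proof is correct, and it routes both implications differently from the paper. For the implication that $f_1$ irreducible forces $f$ irreducible, you argue by contraposition through the multiplicativity of strict transforms: a factorization $f=g_1\cdots g_r$ with $r\ge 2$ yields $f_1=g_{1,1}\cdots g_{r,1}$ with every $g_{i,1}$ a non-unit, because Lemma \ref{Lemma 4_2} and the additivity of tangential multiplicities force each $g_i$ to have the same unique quasi-tangent as $f$. The paper instead pushes down: it takes the Puiseux parametrization $(t^d,\varphi(t))$ of $f_1=0$, transports it to $(t^{dn},ct^{dm}+t^{dm}\varphi(t))$, checks via the $\gcd$ criterion that this is again a Puiseux parametrization, and notes that it already accounts for all of $\mbox{ord}f(0,Y)=dn$. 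For the converse you transport in the opposite direction: you explicitly divide the leading term $ct^{M}$ out of the Puiseux parametrization of $f=0$ to manufacture a branch of $f_1=0$ with $i_0(\cdot,X_1)=d$, and then use $i_0(f_1,X_1)=\mbox{ord}f_1(0,Y_1)=d$ to exclude further non-unit factors; the paper instead parametrizes an arbitrary irreducible factor of $f_1$ as $(t^e,\varphi(t))$, pushes it down to $f$, and squeezes $e\le d$ against $e\ge d$. The two treatments are of comparable length; yours has the small advantage that one direction needs no parametrization at all, at the cost of having to verify the $\gcd$ condition for the lifted series $Y_1(t)=t^{-M}y(t)-c$ (which you do correctly) and of tacitly choosing the conjugate $y(\varepsilon t)$ whose initial coefficient is exactly the $c$ appearing in the transformation $(10)$ rather than some other $n$-th root of $c^n$ --- this is possible because $\{\varepsilon^{M}:\varepsilon^{N}=1\}$ is the full group of $n$-th roots of unity, but it deserves a sentence.
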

\smallskip

\begin{proof}
Let $d$ be the tangential multiplicity of the quasi-tangent  $Y^n-c^nX^m=0$.
Then $\mbox{ord}f_1(0,Y_1)=d$.
First assume that $f=f(X,Y)$ is an irreducible power series.
Let $(t^e,\varphi(t))$ be a Puiseux' parametrization of an irreducible factor of $f_1(X_1,Y_1)$.
Then $e\leq \mbox{ord}f_1(0,Y_1)=d$.
On the other hand, by the definition of the strict transform we get $f(t^{en},ct^{em}+t^{em}\varphi(t))=0$ in $\mathbb{C}\{t\}$.
Since $f$ is irreducible we get $en\geq \mbox{ord}f(0,Y)=dn$ and $e\geq d$.
Thus $e=d$ and $f_1$ is an irreducible power series.

To check that the irreducibility of $f_1$ implies the irreducibility of $f$ assume that $f_1$ is irreducible.
Then the branch $f_1=0$ has a Puiseux' parametrization $(t^d,\varphi(t))$ where $d=\mbox{ord}f_1(0,Y)$.
By the definition of the strict transform we get
$f(t^{dn},ct^{dm}+t^{dm}\varphi(t))=0$ in $\mathbb{C}\{t\}$.
Since $\mbox{ord}f(0,Y)=dn$ it suffices to check that $(t^{dn},ct^{dm}+t^{dm}\varphi(t))$ is a Puiseux' parametrization.
We have $\mbox{gcd}(dn,\mbox{supp}(ct^{dm}+t^{dm}\varphi(t)))=1$ since $\mbox{gcd}(d,\mbox{supp}\varphi(t))=1$.
Therefore the power series $f$ is irreducible.
\end{proof}
\smallskip

\begin{Corollary}
\label{Corollary 4_4}
Every power series $f$ of the form $f=Y^n-aX^m+\sum{c_{\alpha\beta}X^\alpha Y^\beta}$, $\textrm{gcd}(n,m)=1$
where the summation is over $(\alpha,\beta)$ such that $\alpha n+\beta m>nm$ is irreducible.
\end{Corollary}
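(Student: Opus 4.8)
The plan is to reduce the problem to a single Newton transformation via Lemma~\ref{Lemma 4_3} and then to observe that the resulting strict transform is forced to be smooth, hence irreducible. First I would verify that $f$ fits the setting of the preceding lemmas. We may assume $a\neq 0$; fix $c\in\mathbb{C}$ with $c^n=a$. Since every monomial $X^\alpha Y^\beta$ occurring in the tail satisfies $\alpha n+\beta m>nm$, the points $(0,n)$ and $(m,0)$ are the only vertices of $\Delta(f)$, so $f$ is convenient and $\mathcal{N}(f)$ consists of a single edge $S$ joining them, with $\mathrm{in}(f,S)=Y^n-c^nX^m$. Because $\gcd(n,m)=1$ we have $r(S)=\gcd(m,n)=1$, so in the factorization $\mathrm{in}(f,S)=c'\prod_i(Y^{n_S}-a_iX^{m_S})^{d_i}$ there is a single factor occurring to the first power; hence the curve $f=0$ has exactly one quasi-tangent $Y^n-c^nX^m=0$, of tangential multiplicity $d=1$. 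This is precisely the case $d=1$ of Remark~\ref{Remark 3_5}.

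Next I would apply the irreducibility criterion. Let $f_1=f_1(X_1,Y_1)$ be the strict transform of $f$ under the N-transformation $(10)$ attached to the data $(n,m,c)$. By Lemma~\ref{Lemma 4_3} the series $f$ is irreducible if and only if $f_1$ is irreducible, so it suffices to prove the latter. Here Lemma~\ref{Lemma 4_1} does the decisive work: the tangential multiplicity of the quasi-tangent equals $i_0(f_1,X_1)=\mathrm{ord}\,f_1(0,Y_1)$, and we have just computed this multiplicity to be $1$. Consequently $\mathrm{ord}\,f_1(0,Y_1)=1$.

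Finally I would conclude from the smoothness of $f_1$. Since $f_1(0,Y_1)=c_1Y_1+\cdots$ with $c_1\neq 0$, the Weierstrass Preparation Theorem presents $f_1$ as a unit times a distinguished polynomial of degree $1$, namely $Y_1+a_1(X_1)$ with $a_1\in\mathbb{C}\{X_1\}$. A distinguished polynomial of degree $1$ is irreducible, so $f_1$ is irreducible and therefore $f$ is irreducible. I do not expect a serious obstacle here: the only point requiring genuine care is checking that the hypotheses of Lemma~\ref{Lemma 4_3} really hold for $f$, that is, that the tail monomials lie strictly above the edge $S$ so that $S$ is the unique compact face of $\Delta(f)$ and $Y^n-c^nX^m$ is its full initial form. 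Once this is secured, the chain Lemma~\ref{Lemma 4_1} followed by Lemma~\ref{Lemma 4_3} finishes the argument almost mechanically.
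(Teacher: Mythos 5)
Your proof is correct and follows essentially the same route as the paper: apply the Newton transformation with $c^n=a$, note that the strict transform $f_1$ satisfies $\mathrm{ord}\,f_1(0,Y_1)=1$ (the paper phrases this as $\frac{\partial f_1}{\partial Y_1}(0,0)\neq 0$), hence $f_1$ is irreducible, and conclude by Lemma~\ref{Lemma 4_3}. Your extra care in verifying the hypotheses of Lemma~\ref{Lemma 4_3} (single edge, single quasi-tangent of multiplicity $1$) is exactly what the paper leaves implicit.
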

\smallskip

\begin{proof}
The strict transform $f_1$ of $f$ by the N-transformation $(10)$ with $c$ such that $c^n=a$ is of order $1$ since $\frac{\partial f_1}{\partial Y_1}(0,0)\neq 0$.
Therefore by Lemma \ref{Lemma 4_3} the series $f$ is irreducible.
\end{proof}
\smallskip

\begin{Example}[see \cite{Kuo89}] \ \\
\emph{
Let $f=(X^2-Y^3)^2-Y^7$ and $g=(X^2-Y^3)^2-XY^5$.
The both series have the unique quasi-tangent $Y^3-X^2=0$ (of tangential multiplicity  2). 
The strict transforms of $f$ and $g$ by the N-transformation $X=X_1^3$, $Y=(1+Y_1)X_1^2$ are $f_1(X_1,Y_1)=(3Y_1)^2-X_1^2+$ terms of order $>2$ and $g_1(X_1,Y_1)=-X_1+$ terms of order $>1$.
Thus by Lemma \ref{Lemma 4_3} the series $f$ is reducible (since $f_1$ has two tangents) and the series $g$ is irreducible.
}\label{Example 4_5}
\end{Example}
\smallskip

With any binomial curve $\tau\colon Y^n-aX^m=0$, $a\neq 0$, $\mbox{gcd}(n,m)=1$ we associate the N-transformation
$$
\begin{array}{l}
X=X_\tau^n\\[0.1cm]
Y=(a^{1/n}+Y_\tau)X_\tau^m
\end{array}
$$
where $(X_\tau,Y_\tau)$ are new variables and $a^{1/n}=|a|^{1/n}\exp(i\frac{\alpha}{n})$ 
if $a=|a|\exp(i\alpha)$ with $0\leq \alpha< 2\pi$.
We denote by $f_\tau=f_\tau(X_\tau,Y_\tau)$ the strict transform of $f=f(X,Y)$ 
by the N-transformation associated with $\tau$.\\

The following property follows easily from the definitions.
\smallskip

\begin{Property}
\label{Property 4_6}
Let $f,g$ be nonzero power series without constant term.
Then
\begin{enumerate}
\item[$(i)$] a binomial curve $\tau$ is a quasi-tangent to the curve $f=0$ if and only if $f_\tau(0)=0$,
\item[$(ii)$] $(fg)_\tau=f_\tau g_\tau$ for any binomial curve $\tau$,
\item[$(iii)$] if $f=f_1\cdots f_r$ is a decomposition of $f$ into irreducible factors then for any binomial curve $\tau$: $\tau$ is a quasi-tangent to the curve $f=0$ if and only if $\tau$ is a quasi-tangent to a branch $f_i=0$ for some $i\in\{1,\ldots,r\}$.
\end{enumerate}
\end{Property}
\medskip

If $f=f_1\cdots f_r$ is a decomposition of a nonzero power series  $f$ 
without constant term into irreducible factors then we put $r_0(f)=r$ 
i. e. $r_0(f)$ is the number of irreducible factors of $f$ counted with multiplicities.
\medskip

\begin{Proposition}
\label{Proposition 4_7}
If $f\in\mathbb{C}\{X,Y\}$ is a convenient power series then $r_0(f)=\sum_{\tau}{r_0(f_\tau)}$ 
where the summation is over all quasi-tangents $\tau$ to the curve $f=0$.
\end{Proposition}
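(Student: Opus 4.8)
The plan is to reduce the statement to the case of a single irreducible branch and then to invoke the irreducibility criterion of Lemma \ref{Lemma 4_3}. First I would decompose $f$ into irreducible factors $f = f_1 \cdots f_r$, so that $r_0(f) = r$ by definition. Since $f$ is convenient, none of its branches is an axis, hence each factor $f_i$ is itself convenient. The key algebraic fact I would exploit is that both the number-of-factors function $r_0$ and the Newton transform behave multiplicatively: from Property \ref{Property 4_6}$(ii)$ we have $(f_1 \cdots f_r)_\tau = (f_1)_\tau \cdots (f_r)_\tau$ for every binomial curve $\tau$, and directly from the definition of $r_0$ one gets $r_0(gh) = r_0(g) + r_0(h)$. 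Combining these, $r_0(f_\tau) = \sum_{i=1}^r r_0((f_i)_\tau)$, and summing over all quasi-tangents $\tau$ to $f = 0$ and interchanging the two finite sums reduces the claim to proving $\sum_\tau r_0((f_i)_\tau) = 1$ for each fixed irreducible branch $f_i$.

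For a fixed irreducible convenient $f_i$, Lemma \ref{Lemma 4_2} tells us that the curve $f_i = 0$ has exactly one quasi-tangent, call it $\tau_i$; by Property \ref{Property 4_6}$(iii)$ this $\tau_i$ is among the quasi-tangents to $f = 0$, so it does occur in the summation. Now I would split the sum according to whether $\tau = \tau_i$ or not. For any $\tau \ne \tau_i$ the curve $\tau$ is not a quasi-tangent to $f_i = 0$, so Property \ref{Property 4_6}$(i)$ gives $(f_i)_\tau(0) \ne 0$; such a strict transform is a unit and therefore $r_0((f_i)_\tau) = 0$. For the remaining term $\tau = \tau_i$, the strict transform $(f_i)_{\tau_i}$ has no constant term, and since $f_i$ is irreducible and convenient with its single quasi-tangent $\tau_i$, Lemma \ref{Lemma 4_3} shows that $(f_i)_{\tau_i}$ is again irreducible, whence $r_0((f_i)_{\tau_i}) = 1$. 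Adding these contributions yields $\sum_\tau r_0((f_i)_\tau) = 1 = r_0(f_i)$.

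Summing the equality over $i = 1, \ldots, r$ then gives $\sum_\tau r_0(f_\tau) = \sum_{i=1}^r 1 = r = r_0(f)$, which is the assertion. The only points that require care --- and which I regard as the real content beneath the bookkeeping --- are first that convenience is inherited by each irreducible factor (so that Lemmas \ref{Lemma 4_2} and \ref{Lemma 4_3} are applicable to the $f_i$), and second that the single term surviving in the inner sum for each $f_i$ is precisely the one governed by Lemma \ref{Lemma 4_3}; everything else is the multiplicativity of $r_0$ and of the Newton transform together with a legitimate interchange of finite sums.
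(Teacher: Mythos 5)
Your proof is correct and follows essentially the same route as the paper's: both reduce to the irreducible factors via the multiplicativity of the strict transform (Property \ref{Property 4_6}$(ii)$), use Lemma \ref{Lemma 4_2} to isolate the unique quasi-tangent of each branch, and use the irreducibility of the strict transform (Lemma \ref{Lemma 4_3}) to get $r_0((f_i)_{\tau_i})=1$. The only cosmetic difference is that the paper organizes the double sum by partitioning the indices $i$ into sets $I_\tau$ indexed by the quasi-tangents, rather than interchanging the two finite sums as you do.
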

\medskip

\begin{proof}
Let $f=f_1\cdots f_r$ be a factorization of $f$ into irreducible factors $f_i$.
Let $\tau$ be a quasi-tangent to $f=0$ and let 
$$
I_\tau=\{i\in[1,r]\colon \mbox{the branch $f_i=0$ has the quasi-tangent $\tau$}\}.
$$
Thus the sets $I_\tau$ are nonempty, pairwise disjoint and \ $\displaystyle {\bigcup_{\tau}{I_\tau}=[1,r]}$. \\
Let $I_\tau^c=[1,r]\setminus I_\tau$. 
By Property \ref{Property 4_6} $(ii)$ we get
$f_\tau=\prod_{i\in I_\tau}{(f_i)_\tau}\cdot\mbox{unit}$
since $(f_i)_\tau(0)\neq 0$ for $i\in I_\tau^c$.
Therefore we obtain $r_0(f_\tau)=\sum_{i\in I_\tau}{r_0((f_i)_\tau)}=\#I_\tau$ since
$r_0((f_i)_\tau)=1$ for $i\in I_\tau$ by Lemma \ref{Lemma 4_2} and we have 
$\sum_{\tau}{r_0(f_\tau)}=\sum_{\tau}{(\# I_\tau)}=\#[1,r]=r=r_0(f)$.
\end{proof}
\bigskip

\noindent For any convenient power series $f\in\mathbb{C}\{X,Y\}$ we put
\begin{eqnarray*}
r(f,\Delta(f))=\sum_{S\in\mbox{\large\emph{n}}(f)}{r(f,S)}&=&
\mbox{the number of quasi-tangents to the curve }\\[-0.4cm]
&&f=0,\\[0.1cm]
r(\Delta(f))=\sum_{S\in\mbox{\large\emph{n}}(f)}{r(S)}&=&\mbox{the number of quasi-tangents counted with}\\[-0.4cm]
&&\mbox{tangential multiplicities to the curve } f=0
\end{eqnarray*}

Obviously, $r(f,\Delta(f))\leq r(\Delta(f))$ with equality if and only if $f$ is nondegenera\-te. 
Note also that $r(\Delta(f))\!=\!\!\mbox{ the number of integral points lying on}
\bigcup\mbox{\Large\emph{n}}(f)-1$. Hence the integral points divide $\bigcup\mbox{\Large\emph{n}}(f)$ into $r(\Delta(f))$ segments.\\

\begin{Proposition}
\label{Proposition 4_8}
For any convenient power series $f\in\mathbb{C}\{X,Y\}$ we have
$$
r(f,\Delta(f))\leq r_0(f)\leq r(\Delta(f)).
$$
If $f$ is nondegenerate then $r_0(f)=r(\Delta(f))$ 
and the quasi-tangents to the branches of the local curve $f=0$ have tangential multiplicity equal to $1$.\linebreak
Different branches have different quasi-tangents.
\end{Proposition}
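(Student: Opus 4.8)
The plan is to reduce everything to Proposition \ref{Proposition 4_7}, which writes $r_0(f)=\sum_{\tau}r_0(f_\tau)$ as a sum over the quasi-tangents $\tau$ to the curve $f=0$, and then to estimate each summand $r_0(f_\tau)$ from below and from above. For the left-hand inequality I would note that, by Property \ref{Property 4_6}$(i)$, every quasi-tangent $\tau$ satisfies $f_\tau(0)=0$, so $f_\tau$ is a non-unit power series without constant term and hence $r_0(f_\tau)\geq 1$. Since $r(f,\Delta(f))$ is by definition the number of quasi-tangents, summing these trivial bounds over all $\tau$ gives $r_0(f)=\sum_{\tau}r_0(f_\tau)\geq r(f,\Delta(f))$.

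For the right-hand inequality the key is to bound $r_0(f_\tau)$ by the tangential multiplicity of $\tau$. By Lemma \ref{Lemma 4_1} this multiplicity equals $\mbox{ord}f_\tau(0,Y_\tau)$, a finite number $d(\tau)$ because $f_\tau(0,Y_\tau)\neq 0$. By the Weierstrass Preparation Theorem one has $f_\tau=u\cdot P$ with $u$ a unit and $P$ a distinguished polynomial of degree $d(\tau)$ in $Y_\tau$; the irreducible factors of $f_\tau$ coincide, up to units, with those of $P$, each of positive $Y_\tau$-degree, and their degrees add up to $d(\tau)$, whence $r_0(f_\tau)\leq d(\tau)$. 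Because the quasi-tangents attached to a face $S$ are exactly the binomial curves $Y^{n_S}-a_iX^{m_S}=0$ with tangential multiplicities $d_i$ satisfying $\sum_i d_i=r(S)$, and each quasi-tangent belongs to a single face (its inclination $m_S/n_S$ determines $S$), summation gives $\sum_{\tau}d(\tau)=\sum_{S\in\mbox{\large\emph{n}}(f)}r(S)=r(\Delta(f))$. Hence $r_0(f)=\sum_{\tau}r_0(f_\tau)\leq r(\Delta(f))$.

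For the nondegeneracy statement I would use that $f$ nondegenerate means $r(f,S)=r(S)$ on every face, i.e.\ all the exponents $d_i$ equal $1$, so each quasi-tangent has tangential multiplicity $1$ and $\mbox{ord}f_\tau(0,Y_\tau)=1$. Then $f_\tau$ is a degree-one distinguished polynomial times a unit, hence irreducible, giving $r_0(f_\tau)=1$ for every $\tau$; summing recovers $r_0(f)=r(f,\Delta(f))=r(\Delta(f))$, the last equality being the nondegeneracy of $f$. Finally, since $r_0(f_\tau)=\#I_\tau=1$ in the notation of the proof of Proposition \ref{Proposition 4_7}, every set $I_\tau$ is a singleton, which says precisely that distinct branches carry distinct quasi-tangents, all of tangential multiplicity $1$.

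I expect the main obstacle to be the upper bound, specifically the passage via Weierstrass preparation from $\mbox{ord}f_\tau(0,Y_\tau)$ to the number of irreducible factors of $f_\tau$, together with the bookkeeping that matches $\sum_{\tau}d(\tau)$ with $r(\Delta(f))$; once the identification of the tangential multiplicity with $\mbox{ord}f_\tau(0,Y_\tau)$ from Lemma \ref{Lemma \ref{Lemma 4_1}} is in hand, the rest follows directly from the definitions of $r(f,S)$, $r(S)$ and Proposition \ref{Proposition 4_7}.
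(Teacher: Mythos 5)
Your proof is correct and follows essentially the same route as the paper: both reduce to Proposition \ref{Proposition 4_7}, bound each $r_0(f_\tau)$ below by $1$ and above by the tangential multiplicity $\mathrm{ord}\,f_\tau(0,Y_\tau)$, and in the nondegenerate case observe that all these bounds collapse to equalities so that each $f_\tau$ is irreducible and each $I_\tau$ a singleton. The only cosmetic difference is that you justify $r_0(f_\tau)\leq\mathrm{ord}\,f_\tau(0,Y_\tau)$ via Weierstrass preparation while the paper passes through $r_0(f_\tau)\leq\mathrm{ord}\,f_\tau$, and the paper invokes Remark \ref{Remark 3_6} explicitly for the final claim about the branches' tangential multiplicities, which you leave implicit.
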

\medskip

\begin{proof}
By Proposition \ref{Proposition 4_7} we have $r_0(f)=\sum_{\tau}{r_0(f_\tau)}$. 
Therefore $r_0(f,\Delta(f))= \sum_{\tau}{1}\leq r_0(f)\leq\sum_{\tau}{\mbox{ord}f_\tau}\leq\sum_{\tau}{\mbox{ord}f_\tau(0,Y_\tau)}=r(\Delta(f))$ 
since $\mbox{ord}f_\tau(0,Y_\tau)$ equals the tangential multiplicity of $\tau$ (by Lemma \ref{Lemma 4_1}) and the number of quasi-tangents counted with multiplicities associated with the face $S$ is equal to $r(S)$.
Suppose that $f$ is nondegenerate. 
Then $r(f,\Delta(f))=r(\Delta(f))$ and $r(f)=r(\Delta(f))$ by the first part of the proposition.
We have $\mbox{mult}(f,\tau)=\prod_{i=1}^{r}{\mbox{mult}(f_i,\tau)}$ by Remark \ref{Remark 3_6} and the assertion 
about the branches of the local curve $f=0$ follows.
\end{proof}
\smallskip

\begin{Example}
\emph{
Let $f=X^7+X^5Y+X^3Y^2+2X^2Y^3+XY^4+Y^6$.
Then the local curve $f=0$ has four quasi-tangents: $Y^2+X=0$, 
$Y-\varepsilon X^2=0$, $Y-\overline{\varepsilon}X^2=0$, ($\varepsilon^2+\varepsilon+1=0$), $Y+X=0$. 
The quasi-tangent $\tau\colon Y+X=0$ is of tangential multiplicity $2$, 
the remaining quasi-tangents are of tangential multiplicity $1$.
Then $4\leq r_0(f)\leq 5$. By Proposition \ref{Proposition 4_8} we have $r_0(f)=3+r_0(f_\tau)$.
To calculate $r_0(f_\tau)$ we use the N-transformation $X=X_\tau$, $Y=(-1+Y_\tau)X_\tau$.
We get $f(X_\tau,(-1+Y_\tau)X_\tau)=X_\tau^5f_\tau(X_\tau,Y_\tau)$ where
$f_\tau=-X_\tau+$higher order terms. We have $\mbox{ord}f_\tau=1$ and $f_\tau$ is irreducible i. e. $r_0(f_\tau)=1$. 
Consequently $r_0(f)=3+1=4$.\\
}
\label{Example 4_9}
\end{Example}
\medskip

\noindent
\textbf{Notes}:\\
Although the Newton transformations appear when using the Newton algorithm
\cite{Du89}, \cite{JoPf00}, \cite{Mau80} 
a systematic treatment of this notion was given quite recently in \cite{CNVe12}.\\

\section{Newton transformations, intersection multiplicity and the Milnor number}
\label{sec: 5.Newton's transformations, intersection multiplicity and the Milnor number}

The Minkowski double area $[\Delta,\Delta']\in\mathbb{N}\cup\{\infty\}$ of the pair $\Delta,\Delta'$ 
of Newton diagrams is uniquely determined by the following conditions
\begin{enumerate}
\item[(M1)] $[\Delta_1+\Delta_2,\Delta']=[\Delta_1,\Delta']+[\Delta_2,\Delta']$,
\item[(M2)] $[\Delta,\Delta']=[\Delta',\Delta]$,
\item[(M3)]
$
\left[
{\scriptsize
\left\{
{\setlength\arraycolsep{2pt}
\begin{array}{c}
a\\
\hline\hline
b
\end{array}}
\right\}},
{\scriptsize
\left\{
{\setlength\arraycolsep{2pt}
\begin{array}{c}
a'\\
\hline\hline
b'
\end{array}}
\right\}}
\right]
=\inf\{ab',a'b\}.
$
\end{enumerate}
\medskip

\begin{Lemma}
\label{Lemma 5_1}
If $\Delta=\sum_{S\in\mbox{\large{n}}(\Delta)}
{
{\scriptsize
\left\{
{\setlength\arraycolsep{2pt}
\begin{array}{c}
|S|_1\\
\hline\hline
|S|_2
\end{array}}
\right\}}
}
$
is a convenient Newton diagram then
\begin{enumerate}
\item[$(i)$] $[\Delta,\Delta]=2\ \mbox{ area } \ (\mathbb{R}_+^2\setminus\Delta)$
\item[$(ii)$] $\displaystyle{[\Delta,\Delta]=\sum_{S\in\mbox{\large{n}}(\Delta)}{(|S|_1|S|_2+a(S)|S|_2+b(S)|S|_1)}}$.
\end{enumerate}
\end{Lemma}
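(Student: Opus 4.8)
The plan is to compute $[\Delta,\Delta]$ directly from the axioms (M1)--(M3) and then to recognize the resulting expression simultaneously as the sum in $(ii)$ and as twice the area in $(i)$. Since $\Delta$ is convenient, the hypothesis furnishes the decomposition $\Delta=\sum_{S}\Delta_S$, where for each edge $S$ I write $\Delta_S$ for the elementary triangular diagram with horizontal leg $|S|_1$ and vertical leg $|S|_2$ (in the minimal decomposition of Property 3.1 the two axial quadrants collapse to the identity of the semigroup because $a(\Delta)=b(\Delta)=0$). Bi-additivity (M1) together with symmetry (M2) then turns $[\Delta,\Delta]$ into the double sum $\sum_{S,S'}[\Delta_S,\Delta_{S'}]$, and each term is evaluated by (M3) with $(a,b)=(|S|_1,|S|_2)$, $(a',b')=(|S'|_1,|S'|_2)$, giving
$$[\Delta,\Delta]=\sum_{S,S'}\inf\{\,|S|_1\,|S'|_2,\;|S'|_1\,|S|_2\,\}.$$

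Next I would collapse this double sum. Order the edges $S_1,\dots,S_m$ from left to right; convexity of the boundary forces the inclinations $|S_i|_1/|S_i|_2$ to be \emph{strictly} increasing, so for $i<j$ one has $|S_i|_1|S_j|_2<|S_j|_1|S_i|_2$ and hence the infimum in each off-diagonal term equals $|S_i|_1|S_j|_2$. The diagonal contributes $\sum_i |S_i|_1|S_i|_2$, while the off-diagonal contributes $2\sum_{i<j}|S_i|_1|S_j|_2$ by symmetry of $\inf$. Using that $a(S_j)$, the distance of $S_j$ to the vertical axis, is the partial sum $\sum_{i<j}|S_i|_1$ of the horizontal legs lying to its left, and dually $b(S_i)=\sum_{j>i}|S_j|_2$, I can rewrite $2\sum_{i<j}|S_i|_1|S_j|_2=\sum_j a(S_j)|S_j|_2+\sum_i b(S_i)|S_i|_1$. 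Adding back the diagonal yields exactly $\sum_{S}(|S|_1|S|_2+a(S)|S|_2+b(S)|S|_1)$, which is $(ii)$.

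For $(i)$ I would compute $\mathrm{area}(\mathbb{R}_+^2\setminus\Delta)$ geometrically. The complement is the finite polygon bounded by the two axes and the boundary polyline of $\Delta$; cutting it by the vertical lines through the vertices decomposes it into the trapezoids sitting below the successive edges, the one below $S$ having parallel sides $b(S)+|S|_2$ and $b(S)$ over a base of length $|S|_1$, hence area $b(S)|S|_1+\tfrac12|S|_1|S|_2$. Summing and doubling, and inserting $b(S_i)=\sum_{j>i}|S_j|_2$ as above, I obtain precisely the combinatorial sum just computed; thus $2\,\mathrm{area}(\mathbb{R}_+^2\setminus\Delta)=[\Delta,\Delta]$, proving $(i)$. (Equivalently, averaging this vertical slicing with the symmetric horizontal slicing, whose strip to the left of $S$ has area $a(S)|S|_2+\tfrac12|S|_1|S|_2$, reproduces the symmetric formula of $(ii)$ directly.)

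The routine part is the trapezoid bookkeeping; the one point demanding care is the handling of the infimum in (M3). Everything hinges on the \emph{strict} monotonicity of the inclinations along a convex boundary, which guarantees that each off-diagonal infimum is attained by the left-hand edge and lets the double sum telescope through the partial-sum identities for $a(S)$ and $b(S)$. Once that is in place, both assertions follow from the single evaluation of $[\Delta,\Delta]$.
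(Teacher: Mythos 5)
Your proof is correct. It starts from the same point as the paper --- expanding $[\Delta,\Delta]$ by (M1)--(M3) into the double sum $\sum_{S,T}\inf\{|S|_1|T|_2,|S|_2|T|_1\}$ --- but then organizes the rest differently. The paper asserts that this double sum ``implies $(i)$'' with no further detail, and then deduces $(ii)$ from $(i)$ by cutting $\mathbb{R}_+^2\setminus\Delta$ into triangles with apex at the origin, one per edge $S$, whose double area is $|S|_1|S|_2+a(S)|S|_2+b(S)|S|_1$ by the shoelace formula. You reverse the logical order: you prove $(ii)$ first, purely combinatorially, by resolving each off-diagonal infimum via the strict monotonicity of the inclinations along the convex boundary and then telescoping with the partial-sum identities $a(S_j)=\sum_{i<j}|S_i|_1$, $b(S_i)=\sum_{j>i}|S_j|_2$; and you obtain $(i)$ from a trapezoid (rather than triangle) decomposition of the complement. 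What your route buys is an explicit justification of exactly the step the paper leaves to the reader --- why the double sum of infima equals twice the area --- at the cost of more index bookkeeping; the paper's triangle decomposition is the quicker way to see $(ii)$ once $(i)$ is granted. Both arguments rest on the same two facts, the bilinearity of the Minkowski pairing and the convexity of the boundary polyline, so the difference is one of presentation rather than substance.
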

\medskip

\begin{proof}
By (M1) and (M3) we get $[\Delta,\Delta]=\sum_{S,T\in\mbox{\emph{\large{n}}}(\Delta)}{\inf\{|S|_1|T|_2,|S|_2|T|_1)\}}$
which implies $(i)$.\\
To check $(ii)$ observe that $|S|_1|S|_2+a(S)|S|_2+b(S)|S|_1$ equals to the double area of triangle with vertices 
$(0,0)$, $(a(S),|S|_2+b(S))$ and $(a(S)+|S|_1,b(S))$ and use $(i)$.
\end{proof}
\medskip

\begin{Lemma}
\label{Lemma 5_2}
Let $\Delta$ be a Newton diagram. 
Then for every Newton's edge $E$ the supporting line of $\Delta$ parallel to $E$ is described by the equation
$$
|E|_2\alpha+|E|_1\beta=
\left[
{\scriptsize
\left\{
{\setlength\arraycolsep{2pt}
\begin{array}{c}
|E|_1\\
\hline\hline
|E|_2
\end{array}}
\right\}},
\Delta
\right]
$$
\end{Lemma}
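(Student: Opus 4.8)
The plan is to connect the geometric description of supporting lines in Property \ref{Property 3_2} with the bilinearity of the Minkowski double area. First I would note that a line parallel to the edge $E$ has slope $-|E|_2/|E|_1 = -1/\theta$, where $\theta = |E|_1/|E|_2$ is the inclination of $E$; indeed, the vertices $(\alpha,\beta)$, $(\alpha',\beta')$ of $E$ satisfy $\alpha<\alpha'$, $\beta'<\beta$, so the slope is $(\beta'-\beta)/(\alpha'-\alpha) = -|E|_2/|E|_1$. Hence the supporting line of $\Delta$ parallel to $E$ is precisely the line of slope $-1/\theta$ treated in Property \ref{Property 3_2}. Writing $\nu_0$ for its horizontal intercept, that line has equation $\alpha + \theta\beta = \nu_0$, and multiplying by $|E|_2$ and using $|E|_2\theta = |E|_1$ gives $|E|_2\alpha + |E|_1\beta = |E|_2\nu_0$. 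Thus the left-hand side of the asserted equation is already in the correct form, and the whole statement reduces to proving that $|E|_2\nu_0$ equals the Minkowski double area on the right, which I denote $[\{|E|_1/|E|_2\},\Delta]$ (the first argument being the elementary diagram with vertices $(|E|_1,0)$ and $(0,|E|_2)$).

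Next I would substitute the value of $\nu_0$ furnished by Property \ref{Property 3_2}, namely $\nu_0 = a(\Delta) + \sum_{S\in\mbox{\large{n}}(\Delta)}\inf\{|S|_1,\theta|S|_2\} + \theta b(\Delta)$, and multiply through by $|E|_2$. Using $|E|_2\theta = |E|_1$, each edge term becomes $\inf\{|E|_2|S|_1,\,|E|_1|S|_2\}$, and the two boundary terms become $|E|_2\,a(\Delta)$ and $|E|_1\,b(\Delta)$. In parallel I would expand $[\{|E|_1/|E|_2\},\Delta]$ by feeding the minimal decomposition $(*)$ of $\Delta$ from Property \ref{Property 3_1} into the additivity rule (M1). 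By (M3), the edge summand $\{|S|_1/|S|_2\}$ contributes $\inf\{|E|_1|S|_2,\,|E|_2|S|_1\}$, matching the edge terms above exactly; the quadrant summands $\{a(\Delta)/\infty\}$ and $\{\infty/b(\Delta)\}$ contribute $a(\Delta)|E|_2$ and $|E|_1\,b(\Delta)$ respectively. Comparing the two linear combinations term by term then yields $|E|_2\nu_0 = [\{|E|_1/|E|_2\},\Delta]$, which is the claim.

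The only point requiring genuine care is the evaluation of the quadrant contributions: one must read off from (M3) that $[\{a/b\},\{a'/\infty\}] = a'b$ and $[\{a/b\},\{\infty/b'\}] = ab'$, i.e. that an infinite factor is absorbed by the infimum so that these terms collapse to single products. Everything else is the routine bookkeeping of matching the two expansions, so I expect this handling of the $\infty$-entries, together with the consistency check that the formula of Property \ref{Property 3_2} and the definition (M1)--(M3) assign the boundary distances $a(\Delta)$, $b(\Delta)$ in compatible positions, to be the main obstacle.
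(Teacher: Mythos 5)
Your proof is correct and takes essentially the same route as the paper, which disposes of the lemma in one line by substituting $\theta=|E|_1/|E|_2$ into the abscissa formula of Property \ref{Property 3_2}. The term-by-term matching of $|E|_2\nu_0$ against the expansion of the Minkowski double area via the minimal decomposition and (M1)--(M3), including the absorption of the $\infty$-entries in the quadrant terms, is exactly the bookkeeping the paper leaves implicit.
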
 
\medskip

\begin{proof}
The lemma follows from Property \ref{Property 3_2} by putting $\theta=\frac{|E|_1}{|E|_2}$ 
into the formula for the abscissa of the point at which the supporting line intersects the axis $\beta=0$.
\end{proof}
\smallskip

\begin{Theorem}
\label{Theorem 5_3}
Let $f$ be a nonzero power series without constant term.
Then for every convenient power series $h$:
$$
i_0(f,h)=[\Delta(f),\Delta(h)]+\sum_{\tau}{i_0(f_\tau,h_\tau)}
$$
where the summation is over all quasi-tangents $\tau$ to the curve $h=0$.
\end{Theorem}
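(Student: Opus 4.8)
The plan is to reduce to the case where $h$ is a single branch and then to push everything through the Newton transformation attached to the unique quasi-tangent of $h$; the good parametrization of $h$ does all the work, so no hypothesis on $f$ beyond $f\neq 0$, $f(0)=0$ is needed.

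First I would check that both sides are additive in $h$ over convenient factorizations, so that it suffices to treat $h$ irreducible. If $h=h_1h_2$ with $h_1,h_2$ convenient, then $i_0(f,h)=i_0(f,h_1)+i_0(f,h_2)$, while $\Delta(h)=\Delta(h_1)+\Delta(h_2)$ (Property 3.3) gives $[\Delta(f),\Delta(h)]=[\Delta(f),\Delta(h_1)]+[\Delta(f),\Delta(h_2)]$ by (M1). For the correction term, the quasi-tangents to $h$ are those of $h_1$ together with those of $h_2$ (Property 4.6(iii)) and $(h_1h_2)_\tau=(h_1)_\tau(h_2)_\tau$ (Property 4.6(ii)); since $(h_i)_\tau$ is a unit whenever $\tau$ is not a quasi-tangent to $h_i$, the sum $\sum_\tau i_0\big(f_\tau,(h_1)_\tau(h_2)_\tau\big)$ collapses to $\sum_{\tau\mid h_1} i_0(f_\tau,(h_1)_\tau)+\sum_{\tau\mid h_2} i_0(f_\tau,(h_2)_\tau)$. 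Factoring $h$ into irreducible factors (each convenient, since $h$ has no axis component) then reduces the theorem to $h$ irreducible, in which case Lemma 4.2 guarantees a single quasi-tangent $\tau\colon Y^n-aX^m=0$ and the right-hand sum has exactly one term.

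Next I would set up the parametrization. Following the proof of Lemma 4.2, for convenient irreducible $h$ one has $\Delta(h)=d\,\Delta_0$, where $d$ is the tangential multiplicity of $\tau$ and $\Delta_0$ is the diagram with vertices $(0,n)$ and $(m,0)$. By Lemma 4.3 the transform $h_\tau$ is irreducible with $\mbox{ord}\,h_\tau(0,Y_\tau)=d$; let $(s^d,\psi(s))$ be its Puiseux parametrization, $\psi(0)=0$. As in the second half of the proof of Lemma 4.3, substituting $X_\tau=s^d$, $Y_\tau=\psi(s)$ into the N-transformation shows that $\xi(s)=s^{nd}$, $\eta(s)=(a^{1/n}+\psi(s))s^{md}$ is a good parametrization of the branch $h=0$. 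Then by the order formula for intersection with a branch, $i_0(f,h)=\mbox{ord}_s f(\xi(s),\eta(s))$. By Lemma 4.1, $f\big(X_\tau^n,(a^{1/n}+Y_\tau)X_\tau^m\big)=X_\tau^{k}f_\tau(X_\tau,Y_\tau)$ where $k$ is the value of the supporting line of $\Delta(f)$ in the direction of $\tau$; substituting the parametrization gives $f(\xi(s),\eta(s))=s^{dk}f_\tau(s^d,\psi(s))$, so that
\[
i_0(f,h)=dk+\mbox{ord}_s f_\tau(s^d,\psi(s))=dk+i_0(f_\tau,h_\tau),
\]
the last step being again the order formula, now for the branch $h_\tau$. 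Finally I would identify $dk$ with the Minkowski double area: Lemma 5.2 gives $k=[\Delta_0,\Delta(f)]$, and then (M1),(M2) together with $\Delta(h)=d\,\Delta_0$ yield $[\Delta(f),\Delta(h)]=d\,[\Delta_0,\Delta(f)]=dk$. This closes the single-branch case, and with the reduction it proves the theorem.

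The main obstacle I anticipate is bookkeeping rather than conceptual depth: matching the exponent $k$ produced by Lemma 4.1 with the double area $[\Delta_0,\Delta(f)]$ through Lemma 5.2, and verifying that substituting the convergent Puiseux parametrization into the strict-transform identity of Lemma 4.1 is legitimate and preserves orders. One should also note that the degenerate possibilities are harmless: if $f$ is non-convenient the argument is unchanged, and if the irreducible $h$ divides $f$ then both $i_0(f,h)$ and $i_0(f_\tau,h_\tau)$ are $+\infty$, so the identity holds in $\mathbbm{N}\cup\{\infty\}$.
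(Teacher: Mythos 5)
Your proof is correct and follows essentially the same route as the paper: reduce to irreducible $h$ by additivity of both sides, lift the Puiseux parametrization of $h_\tau$ through the Newton transformation to a parametrization of $h$, compute $i_0(f,h)=dk+i_0(f_\tau,h_\tau)$ from the strict-transform identity of Lemma 4.1, and identify $dk$ with $[\Delta(f),\Delta(h)]$ via Lemma 5.2. The only difference is that you spell out the multiplicativity reduction and the identification $k=[\Delta_0,\Delta(f)]$ in more detail than the paper does.
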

\smallskip

\begin{proof}
Fix a nonzero power series $f$ without constant term.
It is easy to check that if the theorem is true for two power series $h_1$, $h_2$ then it is true for their product $h_1h_2$.
Thus it suffices to prove the theorem for irreducible power series $h$.

Let $h$ be a convenient irreducible power series and let $\tau\colon Y^n-aX^m=0$ 
be the unique quasi-tangent to the branch $h=0$ of tangential multiplicity $d$.
Let $c=a^{1/n}$.
Then
$\Delta(h)=
{\scriptsize
\left\{
{\setlength\arraycolsep{2pt}
\begin{array}{c}
dn\\
\hline\hline
dm
\end{array}}
\right\}}
$,
$h(X^n_\tau,(c+Y_\tau)X^m_\tau)=X^{dmn}_\tau h_\tau(X_\tau,Y_\tau)$,
$\mbox{ord}h_\tau(0,Y_\tau)=d$ and $f(X^n_\tau,(c+Y_\tau)X^m_\tau)=X^{k}_\tau f_\tau(X_\tau,Y_\tau)$,
$f_\tau(0,Y_\tau)\neq 0$ in $\mathbb{C}\{Y_\tau\}$.

Let $(t^d,\varphi(t))$ be a Puiseux' parametrization of $h_\tau(X_\tau,Y_\tau)=0$.\\
Then $(t^{dn},(c+\varphi(t)t^{dm})$ is a Puiseux' parametrization of $h(X,Y)=0$ and we have\\
$i_0(f,h)=\mbox{ord}f(t^{dn},(c+\varphi(t))t^{dm})=dk+\mbox{ord}f_\tau(t^d,\varphi(t))=dk+i_0(f_\tau,h_\tau)=$\\
$=[\Delta(h),\Delta(f)]+i_0(f_\tau,h_\tau)$
by Lemma \ref{Lemma 5_2} since $\alpha dn+\beta dm=dk$ is the supporting line of $\Delta(f)$ parallel
to the unique face of $\Delta(h)$.
\end{proof}
\medskip

\begin{Example}
\label{Example 5_4}
\emph{
Let $f=Y^3+X^4Y-X^7$, $g=XY-(X^2+Y^2)^2$. 
Then 
$
\Delta(f)=
{\scriptsize
\left\{
{\setlength\arraycolsep{2pt}
\begin{array}{c}
4\\
\hline\hline
2
\end{array}}
\right\}}
+
{\scriptsize
\left\{
{\setlength\arraycolsep{2pt}
\begin{array}{c}
3\\
\hline\hline
1
\end{array}}
\right\}}
$, 
$\Delta(g)=
{\scriptsize
\left\{
{\setlength\arraycolsep{2pt}
\begin{array}{c}
1\\
\hline\hline
3
\end{array}}
\right\}}
+
{\scriptsize
\left\{
{\setlength\arraycolsep{2pt}
\begin{array}{c}
3\\
\hline\hline
1
\end{array}}
\right\}}
$
and $[\Delta(f),\Delta(g)]=\inf\{4\cdot 3,1\cdot 2\}+\inf\{4\cdot 1,2\cdot 3\} 
+\inf\{3\cdot 3,1\cdot 1\}+\inf\{3\cdot 1,1\cdot 3\}=10$.
The local curves $f=0$ and $g=0$ have exactly one common quasi-tangent
$\tau\colon Y-X^3=0$. 
The N-transformation associated with $\tau$ is $X=X_\tau$, $Y=(1+Y_\tau)X^3_\tau$.
A simple calculation shows that $f_\tau=Y_\tau+(1+Y_\tau)^3X^2_\tau$
and
$g_\tau=Y_\tau-2X^4_\tau(1+Y_\tau)^2-(1+Y_\tau)^3X^5_\tau$.
Thus we have 
$\Delta(f_\tau)=
{\scriptsize
\left\{
{\setlength\arraycolsep{2pt}
\begin{array}{c}
2\\
\hline\hline
1
\end{array}}
\right\}}
$,
$
\Delta(g_\tau)=
{\scriptsize
\left\{
{\setlength\arraycolsep{2pt}
\begin{array}{c}
4\\
\hline\hline
1
\end{array}}
\right\}}
$,
the local curves $f_\tau=0$, $g_\tau=0$ have no common quasi-tangent and
$i_0(f_\tau,g_\tau)=[\Delta(f_\tau),\Delta(g_\tau)]=\inf\{2\cdot 1,1\cdot 4\}=2$.
By Theorem \ref{Theorem 5_3} we get
$i_0(f,g)=[\Delta(f),\Delta(g)]+[\Delta(f_\tau),\Delta(g_\tau)]=10+2=12$.
}
\end{Example}
\medskip

A pair of power series $f,h$ is \emph{nondegenerate} if the local curves $f=0$, \linebreak
$h=0$ have no common quasi-tangent.
It is easy to check that the pair $f,h$ is nondegenerate if and only if for $S\in\mbox{\emph{\Large{n}}}(f)$ and $T\in\mbox{\emph{\Large{n}}}(h)$ we have the following:
\begin{enumerate}
\item[(a)] either $S$ and $T$ are not parallel, i. e. $|S|_1|T|_2\neq |S|_2|T|_1$, or
\item[(b)] the faces $S$ and $T$ are parallel and the system of equations $\mbox{in}(f,S)(X,Y)=0$, 
$\mbox{in}(h,T)(X,Y)=0$ has no solutions in $(\mathbb{C}\setminus\{0\})\times(\mathbb{C}\setminus\{0\})$.
\end{enumerate}
\medskip

\begin{Corollary}[see \cite{Ber75}, \cite{Kh79}]\ \\
Let $f,h$ be nonzero power series without constant term.
Suppose that $f$ or $h$ is convenient.
Then $i_0(f,h)\geq [\Delta(f),\Delta(h)]$ with equality if and only if the pair $(f,h)$ is nondegenerate.
\label{Corollary 5_5}
\end{Corollary}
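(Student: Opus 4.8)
The plan is to deduce Corollary~\ref{Corollary 5_5} directly from the main intersection formula of Theorem~\ref{Theorem 5_3}. First I would reduce to the case where $h$ is convenient: if $f$ is convenient instead, I simply interchange the roles of $f$ and $h$, using the symmetry $i_0(f,h)=i_0(h,f)$ and the symmetry $[\Delta(f),\Delta(h)]=[\Delta(h),\Delta(f)]$ coming from property (M2). So assume $h$ is convenient and apply Theorem~\ref{Theorem 5_3}, which gives
$$
i_0(f,h)=[\Delta(f),\Delta(h)]+\sum_{\tau}i_0(f_\tau,h_\tau),
$$
the sum running over all quasi-tangents $\tau$ to the curve $h=0$. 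Since every intersection multiplicity $i_0(f_\tau,h_\tau)$ is a nonnegative integer, the inequality $i_0(f,h)\geq[\Delta(f),\Delta(h)]$ is immediate, and equality holds if and only if $i_0(f_\tau,h_\tau)=0$ for every quasi-tangent $\tau$ of $h=0$.

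The next step is to interpret the vanishing of each summand. We have $i_0(f_\tau,h_\tau)=0$ precisely when the curves $f_\tau=0$ and $h_\tau=0$ have no common branch passing through the origin; because $h_\tau(0)=0$ (as $\tau$ is a quasi-tangent to $h=0$, by Property~\ref{Property 4_6}$(i)$), this reduces to requiring $f_\tau(0)\neq 0$. Again by Property~\ref{Property 4_6}$(i)$, $f_\tau(0)\neq 0$ means exactly that $\tau$ is \emph{not} a quasi-tangent to $f=0$. Thus $\sum_{\tau}i_0(f_\tau,h_\tau)=0$ if and only if no quasi-tangent $\tau$ of $h=0$ is simultaneously a quasi-tangent of $f=0$, i.e. the curves $f=0$ and $h=0$ share no common quasi-tangent. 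This is the definition of the pair $(f,h)$ being nondegenerate, which completes the equivalence.

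Finally I would verify that the combinatorial reformulation in terms of faces (conditions (a) and (b)) matches the quasi-tangent description, though this is really packaged into the preceding discussion rather than used in the proof. Two faces $S\in\mbox{\Large\emph{n}}(f)$ and $T\in\mbox{\Large\emph{n}}(h)$ contribute a common quasi-tangent only when they are parallel (so the monomials $Y^{n}-aX^{m}$ match up), and among parallel faces a common quasi-tangent corresponds to a common root of the initial forms in $(\mathbb{C}\setminus\{0\})^2$; this is exactly the failure of (a) together with the failure of (b). The main subtlety I anticipate is the convenience reduction: Theorem~\ref{Theorem 5_3} is stated with $h$ convenient, so the argument only directly applies after possibly swapping $f$ and $h$, and one must confirm that ``no common quasi-tangent'' is itself a symmetric condition in $f$ and $h$ — which it plainly is — so that the symmetric conclusion is legitimate regardless of which series carries the convenience hypothesis. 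Everything else is a clean consequence of nonnegativity of intersection numbers and the characterization of quasi-tangents via $f_\tau(0)$.
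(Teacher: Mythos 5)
Your proof is correct and is exactly the derivation the paper intends: the corollary is stated as an immediate consequence of Theorem~\ref{Theorem 5_3}, with the error term $\sum_{\tau}i_0(f_\tau,h_\tau)$ nonnegative and vanishing precisely when no quasi-tangent of $h=0$ is also one of $f=0$, i.e.\ when the pair is nondegenerate. (One wording slip: $i_0(f_\tau,h_\tau)=0$ is not the same as ``no common branch'' --- it means one of the two series is a unit --- but since you immediately invoke the correct criterion $f_\tau(0)\neq 0$ via Property~\ref{Property 4_6}$(i)$, the argument is unaffected.)
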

\medskip

For any convenient Newton diagram $\Delta=\sum_{S\in\mbox{\emph{\large{n}}}(\Delta)}
{
{\scriptsize
\left\{
{\setlength\arraycolsep{2pt}
\begin{array}{c}
|S|_1\\
\hline\hline
|S|_2
\end{array}}
\right\}}
}
$
we put $|\Delta|_1=\sum_{S\in\mbox{\emph{\large{n}}}(\Delta)}{|S|_1}$,
$|\Delta|_2=\sum_{S\in\mbox{\emph{\large{n}}}(\Delta)}{|S|_2}$.
Then $\Delta$ intersects the axes in points $(0,|\Delta|_2)$ and $(|\Delta|_1,0)$.\\

\begin{Theorem}
\label{Theorem 5_6}
For any convenient power series $h\in\mathbb{C}\{X,Y\}$: 
$$
i_0\left(h,\frac{\partial h}{\partial Y}\right)=[\Delta(h),\Delta(h)]-|\Delta(h)|_1+\sum_{\tau}i_0\left(h_\tau,\frac{\partial h_\tau}{\partial Y_\tau}\right)
$$
where the summation is over all quasi-tangents $\tau$ to the local curve $h=0$.
\end{Theorem}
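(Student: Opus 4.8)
The plan is to apply Theorem \ref{Theorem 5_3} not to $\frac{\partial h}{\partial Y}$ directly but to the auxiliary series $f:=Y\frac{\partial h}{\partial Y}$. Inserting the factor $Y$ has two advantages: $f$ is automatically without constant term, since it is divisible by $Y$, so Theorem \ref{Theorem 5_3} applies with no case distinction on $\mbox{ord}_Y h(0,Y)$; and the factor $Y$ will disappear again both on the left-hand side and inside each summand. Indeed, by additivity of the intersection number and $i_0(h,Y)=|\Delta(h)|_1$ for convenient $h$, we have $i_0(h,Y\frac{\partial h}{\partial Y})=|\Delta(h)|_1+i_0(h,\frac{\partial h}{\partial Y})$, so the theorem is equivalent to
$$i_0\left(h,Y\frac{\partial h}{\partial Y}\right)=[\Delta(h),\Delta(h)]+\sum_{\tau}i_0\left(h_\tau,\frac{\partial h_\tau}{\partial Y_\tau}\right).$$

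First I would record how the series transform under the $N$-transformation attached to a quasi-tangent $\tau$ of tangential multiplicity $d$. Differentiating the identity $h(X_\tau^n,(c+Y_\tau)X_\tau^m)=X_\tau^{k}h_\tau$ of Lemma \ref{Lemma 4_1} with respect to $Y_\tau$, and using $\partial Y/\partial Y_\tau=X_\tau^m$, gives $\frac{\partial h}{\partial Y}(X_\tau^n,(c+Y_\tau)X_\tau^m)=X_\tau^{k-m}\frac{\partial h_\tau}{\partial Y_\tau}$. Because $\mbox{ord}_{Y_\tau}h_\tau(0,Y_\tau)=d\geq1$ (Lemma \ref{Lemma 4_1}), the series $\frac{\partial h_\tau}{\partial Y_\tau}(0,Y_\tau)$ is nonzero, so this equation is precisely the strict-transform factorization of $\frac{\partial h}{\partial Y}$; that is, $(\frac{\partial h}{\partial Y})_\tau=\frac{\partial h_\tau}{\partial Y_\tau}$. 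Likewise $(Y)_\tau=c+Y_\tau$, which is a unit as $c\neq0$. Hence by Property \ref{Property 4_6}$(ii)$ we get $(Y\frac{\partial h}{\partial Y})_\tau=(c+Y_\tau)\frac{\partial h_\tau}{\partial Y_\tau}$, and since multiplying by the unit $c+Y_\tau$ does not change the generated ideal, $i_0((Y\frac{\partial h}{\partial Y})_\tau,h_\tau)=i_0(h_\tau,\frac{\partial h_\tau}{\partial Y_\tau})$. Applying Theorem \ref{Theorem 5_3} to $f=Y\frac{\partial h}{\partial Y}$ and the convenient series $h$ then reproduces the displayed identity, except for the diagram term.

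The crux is therefore the combinatorial identity
$$\left[\Delta\left(Y\frac{\partial h}{\partial Y}\right),\Delta(h)\right]=[\Delta(h),\Delta(h)].$$
Here I would use that $\mbox{supp}(Y\frac{\partial h}{\partial Y})=\mbox{supp}(h)\cap\{\beta\geq1\}$, whence $\Delta(Y\frac{\partial h}{\partial Y})\subseteq\Delta(h)$, the only discarded points being those on the horizontal axis. By the minimal decomposition (Property \ref{Property 3_1}) and bilinearity (M1) it suffices to compare, for each edge $T\in\mathcal{N}(h)$, the two pairings with the elementary diagram whose vertices are $(0,|T|_2)$ and $(|T|_1,0)$. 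By Lemma \ref{Lemma 5_2} each of these pairings is the minimum over the corresponding diagram of the linear functional $L=|T|_2\alpha+|T|_1\beta$. Now $L$ is constant along $T$ and attains its minimum over $\Delta(h)$ exactly on $T$; and the upper endpoint $v$ of $T$ always has second coordinate $\geq1$, so $v$ is a vertex of $\Delta(h)$ lying in $\mbox{supp}(h)\cap\{\beta\geq1\}$ and hence in $\Delta(Y\frac{\partial h}{\partial Y})$. Since $\Delta(Y\frac{\partial h}{\partial Y})\subseteq\Delta(h)$ forces $\min L$ over the smaller diagram to be at least $\min L$ over $\Delta(h)$, while the surviving vertex $v$ attains the latter value, the two minima agree for every $T$; summing over $T\in\mathcal{N}(h)$ yields the identity.

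I expect this combinatorial identity to be the main obstacle, precisely because $\Delta(\frac{\partial h}{\partial Y})$ (equivalently $\Delta(Y\frac{\partial h}{\partial Y})$) is not determined by $\Delta(h)$ alone and is generally not a naive downward shift of it; what rescues the argument is that one never needs the whole diagram, only that each supporting line in an edge direction of $\Delta(h)$ remains pinned down by the surviving upper vertex. Combining the three ingredients, namely the reformulation via the factor $Y$, the strict-transform computation, and this identity, together with $i_0(h,Y)=|\Delta(h)|_1$, gives the stated formula. Finally I would note that if $h$ has a multiple factor then both sides are $+\infty$, since some $h_\tau$ then also has a multiple factor, so the identity holds in $\mathbb{N}\cup\{\infty\}$ and nothing is lost by assuming the left-hand side finite.
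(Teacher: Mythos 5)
Your proof is correct and follows essentially the same route as the paper: apply Theorem \ref{Theorem 5_3} with the derivative in the first slot, identify $\left(\frac{\partial h}{\partial Y}\right)_\tau=\frac{\partial h_\tau}{\partial Y_\tau}$ by differentiating the defining identity of the strict transform, and evaluate the Minkowski pairing by observing that the upper endpoint of each edge of $\Delta(h)$ has second coordinate $\geq 1$ and therefore pins down the supporting line of the derivative's diagram. The only departure is your auxiliary factor $Y$, which absorbs the term $-|\Delta(h)|_1$ via $i_0(h,Y)=|\Delta(h)|_1$ and removes the need to assume $\frac{\partial h}{\partial Y}(0,0)=0$; this is a tidy repackaging of the paper's computation $\left[\Delta(h),\Delta\left(\frac{\partial h}{\partial Y}\right)\right]=[\Delta(h),\Delta(h)]-|\Delta(h)|_1$ rather than a different argument.
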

\medskip

\begin{proof}
We may assume that $\frac{\partial h}{\partial Y}(0,0)=0$. We will check that
\begin{enumerate}
\item[$(i)$] $[\Delta(h),\Delta\left(\frac{\partial h}{\partial Y}\right)]=[\Delta(h),\Delta(h)]-|\Delta(h)|_1$,
\item[$(ii)$]  if $\tau$ is a quasi-tangent to the curve $h=0$ then $\left(\frac{\partial h}{\partial Y}\right)_\tau=\frac{\partial h_\tau}{\partial Y_\tau}$.
\end{enumerate}
Proof of $(i)$. 
We have 
$[\Delta(h),\Delta\left(\frac{\partial h}{\partial Y}\right)]=
\sum_{S\in\mbox{\emph{\large{n}}}(\Delta)}{\left[
{\scriptsize
\left\{
{\setlength\arraycolsep{2pt}
\begin{array}{c}
|S|_1\\
\hline\hline
|S|_2
\end{array}}
\right\}},
\Delta\left(\frac{\partial h}{\partial Y}\right)
\right]}$.
The line $\alpha|S|_2+\beta|S|_1={\left[
{\scriptsize
\left\{
{\setlength\arraycolsep{2pt}
\begin{array}{c}
|S|_1\\
\hline\hline
|S|_2
\end{array}}
\right\}},
\Delta\left(\frac{\partial h}{\partial Y}\right)
\right]}$ 
supporting the diagram 
$\Delta\left(\frac{\partial h}{\partial Y}\right)$
and parallel to $S$ passes through the point
$(a(S),|S|_2+b(S)-1)$.
Thus we have
$a(S)|S|_2+(|S|_2+b(S)-1)|S|_1=
\left[
{\scriptsize
\left\{
{\setlength\arraycolsep{2pt}
\begin{array}{c}
|S|_1\\
\hline\hline
|S|_2
\end{array}}
\right\}},
\Delta\left(\frac{\partial h}{\partial Y}\right)
\right]
$
and we get
$
\left[
\Delta(h),\Delta\left(\frac{\partial h}{\partial Y}\right)
\right]=
\sum_{S}(a(S)|S|_2+|S|_2|S|_1+b(S)|S|_1-|S|_1)=
[\Delta(h),\Delta(h)]-|\Delta(h)|_1
$
by Lemma \ref{Lemma 5_1} $(ii)$.\\
Proof of $(ii)$. Let $\tau\colon Y^n-aX^m=0$ be a quasi-tangent to the curve $h=0$. 
Let $c=a^{1/n}$ We have
$$
(\ )\qquad h(X^n_\tau,(c+Y_\tau)X^m_\tau)=X^k_\tau h_\tau(X_\tau,Y_\tau)\quad\mbox{in}\ \ \mathbb{C}\{X_\tau,Y_\tau\}
$$
where $\alpha n+\beta m=k$ is a supporting line of $\Delta(h)$.\\
Then $k>m$ and the line $\alpha n+\beta m=k-m$ supports the diagram $\Delta\left(\frac{\partial h}{\partial Y}\right)$.
Differentiating $(\ )$ with respect to $Y_\tau$ we get 
$\frac{\partial h}{\partial Y}(X^n_\tau,(c+Y_\tau)X_\tau^m)X_\tau^m=X_\tau^k\frac{\partial h_\tau}{\partial Y_\tau}(X_\tau,Y_\tau)$
and
$\frac{\partial h}{\partial Y}(X^n_\tau,(c+Y_\tau)X_\tau^m)=X_\tau^{k-m}\frac{\partial h_\tau}{\partial Y_\tau}(X_\tau,Y_\tau)$.
Therefore $\left(\frac{\partial h}{\partial Y}\right)_\tau=\frac{\partial h_\tau}{\partial Y_\tau}$
and $(ii)$ follows.\\
Now we have by Theorem \ref{Theorem 5_3} and properties $(i)$, $(ii)$:\\[0.2cm]
$\displaystyle{
i_0\left(h,\frac{\partial h}{\partial Y}\right)=\left[\Delta(h),\Delta\left(\frac{\partial h}{\partial Y}\right)\right]+
\sum_{\tau}{i_0\left(h_\tau,\left(\frac{\partial h}{\partial Y}\right)_\tau\right)}=
}
$\\[0.2cm]
$
\displaystyle{
=[\Delta(h),\Delta(h)]-|\Delta(h)|_1+\sum_{\tau}{i_0\left(h_\tau,\frac{\partial h_\tau}{\partial Y_\tau}\right)}.
}
$
\end{proof}
\medskip

\noindent
For any convenient Newton diagram $\Delta$ we put\\[0.2cm]
$\mu(\Delta)=[\Delta,\Delta]-|\Delta|_1-|\Delta|_2+1$,\\[0.2cm]
$\delta(\Delta)=\frac{1}{2}(\mu(\Delta)+r(\Delta)-1)$.\\

\begin{Theorem}
\label{Theorem 5_7}
Let $f\in\mathbb{C}\{X,Y\}$ be a convenient power series. Then
\begin{enumerate}
\item[$(i)$] $\mu_0(f)=\mu_0(\Delta(f))+r(\Delta(f))+\sum_{\tau}{(\mu_0(f_\tau)-1)}$,
\item[$(ii)$] $\displaystyle{\delta_0(f)=\delta(\Delta(f))+\sum_{\tau}{\delta_0(f_\tau)}}$
\end{enumerate}
where the summation is over all quasi-tangents $\tau$ to the local curve $f=0$.
\end{Theorem}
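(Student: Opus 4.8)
The plan is to reduce everything to Teissier's lemma and to Theorem \ref{Theorem 5_6}. For part $(i)$ I would first apply Teissier's lemma to $f$ itself. Since $f$ is convenient, $i_0(f,X)=\mbox{ord}f(0,Y)=|\Delta(f)|_2$ (the ordinate at which $\Delta(f)$ meets the vertical axis), so Teissier's lemma reads $\mu_0(f)=i_0\left(f,\frac{\partial f}{\partial Y}\right)-|\Delta(f)|_2+1$. Next I would feed the right-hand side into Theorem \ref{Theorem 5_6}, which expands $i_0\left(f,\frac{\partial f}{\partial Y}\right)$ as $[\Delta(f),\Delta(f)]-|\Delta(f)|_1+\sum_{\tau}i_0\left(f_\tau,\frac{\partial f_\tau}{\partial Y_\tau}\right)$. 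Collecting the three diagram terms gives exactly $\mu(\Delta(f))=[\Delta(f),\Delta(f)]-|\Delta(f)|_1-|\Delta(f)|_2+1$, so at this stage $\mu_0(f)=\mu(\Delta(f))+\sum_{\tau}i_0\left(f_\tau,\frac{\partial f_\tau}{\partial Y_\tau}\right)$.

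It then remains to handle the summands. For each quasi-tangent $\tau$ the strict transform $f_\tau$ satisfies $f_\tau(0,Y_\tau)\neq 0$, its order being the tangential multiplicity $d_\tau\geq 1$, so Teissier's lemma applies to $f_\tau$ and yields $i_0\left(f_\tau,\frac{\partial f_\tau}{\partial Y_\tau}\right)=\mu_0(f_\tau)+i_0(f_\tau,X_\tau)-1$. By Lemma \ref{Lemma 4_1} the number $i_0(f_\tau,X_\tau)=\mbox{ord}f_\tau(0,Y_\tau)$ is precisely $d_\tau$. Summing over all quasi-tangents and using that $\sum_{\tau}d_\tau=r(\Delta(f))$ (the quasi-tangents counted with tangential multiplicity) turns $\sum_{\tau}i_0\left(f_\tau,\frac{\partial f_\tau}{\partial Y_\tau}\right)$ into $r(\Delta(f))+\sum_{\tau}(\mu_0(f_\tau)-1)$, which combined with the previous sentence is exactly the claimed formula $(i)$.

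For part $(ii)$ I would simply combine $(i)$ with the additivity of the branch count. Proposition \ref{Proposition 4_7} gives $r_0(f)=\sum_{\tau}r_0(f_\tau)$, whence $\mu_0(f)+r_0(f)-1=\mu(\Delta(f))+r(\Delta(f))-1+\sum_{\tau}\left(\mu_0(f_\tau)+r_0(f_\tau)-1\right)$. Recognizing $\mu_0(f_\tau)+r_0(f_\tau)-1=2\delta_0(f_\tau)$ and $\mu(\Delta(f))+r(\Delta(f))-1=2\delta(\Delta(f))$, and dividing by $2$, delivers $\delta_0(f)=\delta(\Delta(f))+\sum_{\tau}\delta_0(f_\tau)$.

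The computation is essentially bookkeeping once Theorem \ref{Theorem 5_6} and Teissier's lemma are in hand; the step I expect to demand the most care is the dictionary between diagram data and intersection numbers, namely the identifications $i_0(f,X)=|\Delta(f)|_2$, $i_0(f_\tau,X_\tau)=d_\tau$, and especially $\sum_{\tau}d_\tau=r(\Delta(f))$. The last of these rests on the fact, read off from the factorization of each initial form $\mbox{in}(f,S)$, that the tangential multiplicities of the quasi-tangents attached to a face $S$ sum to $r(S)$, together with $r(\Delta(f))=\sum_{S}r(S)$. I would also verify that the hypotheses of Teissier's lemma hold both for $f$ and for every $f_\tau$, and note that if $f$ has a multiple factor both sides of $(i)$ are infinite, so that the genuine content lies in the reduced case, which is also where $(ii)$ is meaningful.
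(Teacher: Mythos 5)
Your proposal is correct and follows essentially the same route as the paper: Teissier's lemma applied to $f$ combined with Theorem \ref{Theorem 5_6} to produce $\mu(\Delta(f))+\sum_{\tau}i_0\left(f_\tau,\frac{\partial f_\tau}{\partial Y_\tau}\right)$, then Teissier's lemma applied to each $f_\tau$ together with the identity $\sum_{\tau}i_0(f_\tau,X_\tau)=r(\Delta(f))$, and for $(ii)$ the additivity $r_0(f)=\sum_{\tau}r_0(f_\tau)$ from Proposition \ref{Proposition 4_7}. The bookkeeping identifications you flag ($i_0(f,X)=|\Delta(f)|_2$, $i_0(f_\tau,X_\tau)=d_\tau$, $\sum_\tau d_\tau=r(\Delta(f))$) are exactly the ones the paper uses implicitly.
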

\medskip

\begin{proof}
$(i)$ By Teissier's lemma and Theorem \ref{Theorem 5_6} applied to the power series $f$ we get\\
$
\begin{array}{rcl}
\mu_0(f)&=&i_0\left(f,\frac{\partial f}{\partial Y}\right)-i_0(f,X)+1=\\[0.1cm]
&=&[\Delta(f),\Delta(f)]-|\Delta(f)|_1-|\Delta(f)|_2+1+\sum_{\tau}{i_0\left(f_\tau,\frac{\partial f_\tau}{\partial Y_\tau}\right)}=\\[0.1cm]
&=&\mu(\Delta(f))+\sum_{\tau}{i_0\left(f_\tau,\frac{\partial f_\tau}{\partial Y_\tau}\right)}=\\[0.1cm]
&=&\mu(\Delta(f))+\sum{(\mu_0(f_\tau)+i_0(f_\tau,X)-1)}=\\[0.1cm]
&=&\mu(\Delta(f))+r(\Delta(f))+\sum_{\tau}{(\mu_0(f_\tau)-1)}.
\end{array}
$\\[0.3cm]
$
\begin{array}{lcl}
(ii) \quad 2\delta_0(f)&=&\mu_0(f)+r_0(f)-1=\\[0.1cm]
&=&\mu(\Delta(f))+r(\Delta(f))+\sum_{\tau}{(\mu_0(f_\tau)-1)}+\sum_{\tau}{r_0(f_\tau)}-1=\\[0.1cm]
&=&\mu(\Delta(f))+r(\Delta(f))-1+\sum_{\tau}{(\mu_0(f_\tau)+r_0(f_\tau)-1)}=\\[0.1cm]
&=&2\delta(\Delta(f))+2\sum_{\tau}{\delta_0(f_\tau)}\\[0.1cm]
\end{array}
$\\[0.1cm]
and $(ii)$  follows.
\end{proof}
\bigskip

\noindent
We can rewrite the formula \ref{Theorem 5_7} $(i)$ for the Milnor number in the form
$$
\mu_0(f)=\mu(\Delta(f))+r(\Delta(f))-r(f,\Delta(f))+\sum_{\tau}{\mu_0(f_\tau)}.
$$
Then we get\\

\begin{Corollary}
\label{Corollary 5_8}
For any convenient power series $f\in\mathbb{C}\{X,Y\}$:
\begin{enumerate}
\item[$(i)$] $\mu_0(f)\geq \mu(\Delta(f))+r(\Delta(f))-r(f,\Delta(f))$ 
with equality if and only if all strict transforms $f_\tau$ corresponding to the quasi-tangents $\tau$ of the curve $f=0$ are nonsingular,
\item[$(ii)$] $\delta_0(f)\geq \delta(\Delta(f))$ with equality if and only if $\mu_0(f)= \mu(\Delta(f))+r(\Delta(f))-r(f,\Delta(f))$.
\end{enumerate}
\end{Corollary}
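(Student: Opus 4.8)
The plan is to derive both statements directly from the two additive decompositions already at hand. For $(i)$ I would use the rewritten form of the Milnor formula displayed just above the corollary, $\mu_0(f)=\mu(\Delta(f))+r(\Delta(f))-r(f,\Delta(f))+\sum_{\tau}\mu_0(f_\tau)$, and for $(ii)$ the formula of Theorem \ref{Theorem 5_7} $(ii)$, $\delta_0(f)=\delta(\Delta(f))+\sum_{\tau}\delta_0(f_\tau)$. In both cases the quantity appearing on the right of the claimed inequality is exactly the lattice term, so the whole problem collapses to showing that the remaining sum over quasi-tangents is nonnegative and to pinning down when it is zero.

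For $(i)$ I first observe that each index $\tau$ in the sum is a quasi-tangent to $f=0$, so by Property \ref{Property 4_6} $(i)$ the strict transform $f_\tau$ has no constant term. Hence $\mu_0(f_\tau)$, being the $\mathbb{C}$-dimension of $\mathbb{C}\{X_\tau,Y_\tau\}/(\partial f_\tau/\partial X_\tau,\partial f_\tau/\partial Y_\tau)$, is a nonnegative integer or $+\infty$, and the inequality $\mu_0(f)\geq\mu(\Delta(f))+r(\Delta(f))-r(f,\Delta(f))$ follows at once. For equality I would argue that $\mu_0(f_\tau)=0$ forces the Jacobian ideal to be the whole ring, so one of the partials is a unit; together with $f_\tau(0)=0$ this gives $\mbox{ord}f_\tau=1$, i.e. $f_\tau=0$ nonsingular, and conversely a nonsingular $f_\tau$ has $\mu_0(f_\tau)=0$. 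Thus the sum vanishes precisely when every $f_\tau$ is nonsingular, which is the stated equivalence.

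For $(ii)$ the argument runs in parallel but is even shorter: Proposition \ref{Proposition 2_7} $(i)$ gives $\delta_0(f_\tau)\geq 0$ with equality if and only if $f_\tau=0$ is nonsingular, so $\delta_0(f)\geq\delta(\Delta(f))$ with equality exactly when all the $f_\tau$ are nonsingular. The last step is to notice that this is the very same condition obtained in $(i)$, namely nonsingularity of every strict transform. Hence $\delta_0(f)=\delta(\Delta(f))$ holds if and only if $\mu_0(f)=\mu(\Delta(f))+r(\Delta(f))-r(f,\Delta(f))$, which is the asserted equivalence.

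I do not expect a genuine obstacle here: the substance is carried entirely by the decomposition formulas of Theorem \ref{Theorem 5_7} together with the nonnegativity of $\mu_0$ and $\delta_0$. The only point demanding care is the bookkeeping in the possibly reducible or nonreduced case. For $(ii)$ one works with reduced $f$ so that $\delta_0$ is defined; then $\mu_0(f)<\infty$ forces every $\mu_0(f_\tau)<\infty$ and hence every $f_\tau$ reduced, so Proposition \ref{Proposition 2_7} $(i)$ applies to each $f_\tau$. With the convention that a transform carrying a multiple factor has $\mu_0(f_\tau)=\delta_0(f_\tau)=+\infty$ and is in particular singular, the equivalence ``the correction sum vanishes if and only if every $f_\tau$ is nonsingular'' holds uniformly across all cases.
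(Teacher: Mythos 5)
Your proposal is correct and follows exactly the route the paper intends: part $(i)$ is read off from the displayed rewriting of Theorem \ref{Theorem 5_7} $(i)$ using $\mu_0(f_\tau)\geq 0$ with equality precisely for nonsingular $f_\tau$, and part $(ii)$ from Theorem \ref{Theorem 5_7} $(ii)$ together with Proposition \ref{Proposition 2_7} $(i)$, the two equality conditions coinciding because both amount to the nonsingularity of every strict transform. The extra care you take with the degenerate (infinite or nonreduced) cases is a sensible addition but does not change the argument.
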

\medskip

\begin{Corollary}[Planar Kouchnirenko's Theorem, see \cite{Kou76}] \ \\
For any convenient power series $f\in\mathbb{C}\{X,Y\}$:\\
$\mu_0(f)\geq \mu(\Delta(f))$ with equality if and only if $f$ is nondegenerate.\\
\label{Corollary 5_9}
\end{Corollary}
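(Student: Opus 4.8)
The plan is to derive Planar Kouchnirenko's Theorem directly from Corollary~\ref{Corollary 5_8}, which is already at our disposal. Recall that Corollary~\ref{Corollary 5_8}$(i)$ gives
$$
\mu_0(f)\geq \mu(\Delta(f))+r(\Delta(f))-r(f,\Delta(f)),
$$
so the first step is to control the correction term $r(\Delta(f))-r(f,\Delta(f))$. By the discussion preceding Proposition~\ref{Proposition 4_8} we always have $r(f,\Delta(f))\leq r(\Delta(f))$, whence this term is nonnegative; combining with the displayed inequality immediately yields $\mu_0(f)\geq \mu(\Delta(f))$. This disposes of the inequality without any computation, reducing the whole theorem to the analysis of the two equality conditions.

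For the equivalence, I would argue in both directions using the equality criterion built into Corollary~\ref{Corollary 5_8}$(i)$, namely that $\mu_0(f)=\mu(\Delta(f))+r(\Delta(f))-r(f,\Delta(f))$ holds if and only if every strict transform $f_\tau$ is nonsingular. Suppose first that $f$ is nondegenerate. Then by the remark just before Proposition~\ref{Proposition 4_8} we have $r(f,\Delta(f))=r(\Delta(f))$, so the correction term vanishes and the Corollary's bound reads $\mu_0(f)\geq \mu(\Delta(f))$ with equality exactly when all $f_\tau$ are nonsingular. Thus the task is to verify that nondegeneracy forces each $f_\tau$ to be nonsingular. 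This follows from Proposition~\ref{Proposition 4_8}: when $f$ is nondegenerate, each quasi-tangent $\tau$ has tangential multiplicity $1$, and by Lemma~\ref{Lemma 4_1} the tangential multiplicity equals $\mbox{ord}f_\tau(0,Y_\tau)=\mbox{ord}f_\tau=1$, so $f_\tau$ is nonsingular. Hence equality $\mu_0(f)=\mu(\Delta(f))$ holds.

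Conversely, assume $\mu_0(f)=\mu(\Delta(f))$. Since $r(\Delta(f))-r(f,\Delta(f))\geq 0$ and each summand $\mu_0(f_\tau)\geq 0$ in the rewritten formula
$$
\mu_0(f)=\mu(\Delta(f))+r(\Delta(f))-r(f,\Delta(f))+\sum_{\tau}{\mu_0(f_\tau)},
$$
equality of $\mu_0(f)$ with $\mu(\Delta(f))$ forces both $r(\Delta(f))=r(f,\Delta(f))$ and $\mu_0(f_\tau)=0$ for every $\tau$. The first equality is precisely the nondegeneracy of $f$ by the characterization recalled before Proposition~\ref{Proposition 4_8}. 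I expect the extraction of these two separate vanishing conditions from a single numerical equality to be the main delicate point, since it relies on all the correction terms being simultaneously nonnegative; the rewritten formula makes this transparent because it displays $\mu_0(f)-\mu(\Delta(f))$ as a sum of manifestly nonnegative contributions, so each must vanish individually.

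Thus the proof reduces to assembling these pieces: the inequality is immediate, and each direction of the equivalence is read off from the equality clauses of Corollary~\ref{Corollary 5_8} together with the relation between tangential multiplicity and $\mbox{ord}f_\tau$ from Lemma~\ref{Lemma 4_1} and the nondegeneracy criterion $r(f,\Delta(f))=r(\Delta(f))$. No genuinely new computation is required beyond invoking these earlier results in the right order.
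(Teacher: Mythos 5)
Your proposal is correct and follows exactly the route the paper intends: the statement is placed immediately after the rewritten formula $\mu_0(f)=\mu(\Delta(f))+r(\Delta(f))-r(f,\Delta(f))+\sum_{\tau}\mu_0(f_\tau)$, and the corollary is read off from the nonnegativity of the two correction terms together with the criterion $r(f,\Delta(f))=r(\Delta(f))\Leftrightarrow f$ nondegenerate, just as you do. The only cosmetic point is the step $\mathrm{ord}\,f_\tau(0,Y_\tau)=\mathrm{ord}\,f_\tau$, which is not an identity in general but holds here because $\mathrm{ord}\,f_\tau(0,Y_\tau)=1$ forces $\mathrm{ord}\,f_\tau=1$ once $f_\tau(0,0)=0$.
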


\begin{Example}\emph{(cf. Example \ref{Example 4_9})\\
Let $f=X^7+X^5Y+X^3Y^2+2X^2Y^3+XY^4+X^6$ and $\Delta=\Delta(f)$.\\
Then $\mu(\Delta)=18$, $r(\Delta)=5$, $r(f,\Delta)=4$, $\delta(\Delta)=11$.
All strict transforms of $f$ by N-transformations corresponding to the quasi-tangents to $f=0$ are nonsingular.
Therefore we get $\mu_0(f)=\mu(\Delta)+r(\Delta)-r(f,\Delta)=19$ and $\delta_0(f)=\delta(\Delta)=11$.\\
\label{Example 5_10}
}
\end{Example}

\begin{Example}\emph{(cf. Example \ref{Example 4_5})\\
Let $f=(X^2-Y^3)^2-Y^7$ and $g=(X^2-Y^3)^2-XY^5$.
Then 
$\Delta(f)=\Delta(g)=\Delta=$\\
$=
{\scriptsize
\left\{
{\setlength\arraycolsep{2pt}
\begin{array}{c}
6\\
\hline\hline
4
\end{array}}
\right\}}
$,
$\mu(\Delta)=15$, $r(f,\Delta)=r(g,\Delta)=1$ and $r(\Delta)=2$.\\
The strict transforms $f_1$ and $g_1$ of $f$ and $g$ by the N-transformation corresponding to the unique quasi-tangent
$Y^3-X^2=0$ of $f$ and $g$ are $f_1=(3Y_1)^2-X_1^2+\cdots$ and $g_1=-X_1+\cdots$ . 
Therefore we get 
$\mu_0(f)=\mu(\Delta)+r(\Delta)-r(f,\Delta)+\mu_0(f_1)=$\\$=16+\mu_0(f_1)=16+1=17$
and
$\mu_0(g)=\mu(\Delta)+r(\Delta)-r(f,\Delta)+\mu_0(g_1)=$\\$=16+\mu_0(g_1)=16+0=16$.\\
}
\label{Example 5_11}
\end{Example}
\medskip

We say that a local curve $f=0$ is in a general position with respect to coordinates $(X,Y)$ 
if the axes $Y=0$ and $X=0$ are not tangent to $f=0$ (see Remark \ref{Remark 3_4}).
Let $t_0(f)$ be the number of tangents to the curve $f=0$.\\

\begin{Corollary}[see \cite{Ca00} and appendix to \cite{Ph73}] \ \\
Suppose that the local curves  $f=0$ and  $g=0$ are in a general position with respect to $(X,Y)$.
Let $n=\emph{ord}f$ and $m=\emph{ord}g$.\\
Then
\begin{enumerate}
\item[$(i)$] $i_0(f,g)=nm+\sum_{\tau}{i_0(f_\tau,g_\tau)}$,
\item[$(ii)$] $\delta_0(f)=\frac{1}{2}n(n-1)+\sum_{\tau}{\delta_0(f_\tau)}$,
\item[$(iii)$] $i_0(f,\frac{\partial f}{\partial Y})=n(n-1)+\sum_{\tau}{i_0(f_\tau,\frac{\partial f_\tau}{\partial Y_\tau})}$,
\item[$(iv)$] $\mu_0(f)+t_0(f)-1=n(n-1)+\sum_{\tau}{\mu_0(f_\tau)}$.
\end{enumerate}
\label{Corollary 5_12}
\end{Corollary}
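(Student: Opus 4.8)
The plan is to read off all four identities directly from the general formulae of this section, once the hypotheses are translated into statements about $\Delta(f)$ and $\Delta(g)$. First I would invoke Remark \ref{Remark 3_4}: the general-position assumption is precisely that the axes $Y=0$ and $X=0$ are not tangent to $f=0$ and $g=0$, so $\mbox{ord}f(X,0)=\mbox{ord}f(0,Y)=\mbox{ord}f=n$ and likewise $\mbox{ord}g=m$. Hence $\Delta(f)$ is the segment diagram with vertices $(0,n)$, $(n,0)$ and $\Delta(g)$ the one with vertices $(0,m)$, $(m,0)$, each having a single Newton edge, and the quasi-tangents to $f=0$ are exactly the ordinary tangents $Y-a_iX=0$. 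In particular their number equals $t_0(f)=r(f,\Delta(f))$, which is the bookkeeping fact that makes $(iv)$ work.

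Next I would compute the relevant diagram invariants. Using (M3), $[\Delta(f),\Delta(g)]=\inf\{nm,mn\}=nm$ and $[\Delta(f),\Delta(f)]=n^2$. Since $\Delta(f)$ meets the axes at $(n,0)$ and $(0,n)$ we have $|\Delta(f)|_1=|\Delta(f)|_2=n$, while its single edge $S$ has $|S|_1=|S|_2=n$ and $a(S)=b(S)=0$, so $r(\Delta(f))=\gcd(n,n)=n$. Consequently $\mu(\Delta(f))=n^2-n-n+1=(n-1)^2$ and $\delta(\Delta(f))=\tfrac{1}{2}((n-1)^2+n-1)=\tfrac{1}{2}n(n-1)$.

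With these values in hand each statement is an immediate specialization. Part $(i)$ is Theorem \ref{Theorem 5_3} applied to the pair $f$, $h=g$: the sum over quasi-tangents of $g=0$ collapses to the common quasi-tangents, since $f_\tau(0)\neq 0$ (so $f_\tau$ is a unit and $i_0(f_\tau,g_\tau)=0$) whenever $\tau$ is not a quasi-tangent of $f=0$, which also exhibits the symmetry of the formula. Part $(iii)$ is Theorem \ref{Theorem 5_6} with $[\Delta(f),\Delta(f)]-|\Delta(f)|_1=n^2-n=n(n-1)$. Part $(ii)$ is Theorem \ref{Theorem 5_7} $(ii)$ combined with $\delta(\Delta(f))=\tfrac{1}{2}n(n-1)$. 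Finally $(iv)$ follows from Theorem \ref{Theorem 5_7} $(i)$: from $\mu_0(f)=\mu(\Delta(f))+r(\Delta(f))+\sum_\tau(\mu_0(f_\tau)-1)$ and $\mu(\Delta(f))+r(\Delta(f))=(n-1)^2+n=n(n-1)+1$, together with the fact that the number of quasi-tangents equals $t_0(f)$, one gets $\mu_0(f)+t_0(f)-1=n(n-1)+\sum_\tau\mu_0(f_\tau)$.

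The only genuinely non-mechanical point is the identification $t_0(f)=r(f,\Delta(f))$, that is, that under general position each quasi-tangent is an honest tangent line and conversely; this is exactly the content of Remark \ref{Remark 3_4}. There is therefore no real obstacle, and the corollary is a routine consequence of Theorems \ref{Theorem 5_3}, \ref{Theorem 5_6} and \ref{Theorem 5_7} once the diagram invariants above are evaluated.
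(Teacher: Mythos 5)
Your proof is correct and is exactly the derivation the paper intends: the corollary is stated as an immediate specialization of Theorems \ref{Theorem 5_3}, \ref{Theorem 5_6} and \ref{Theorem 5_7} to the single-edge diagram with vertices $(0,n)$ and $(n,0)$, using Remark \ref{Remark 3_4} to identify quasi-tangents with tangent lines. Your evaluations $[\Delta(f),\Delta(g)]=nm$, $\mu(\Delta(f))=(n-1)^2$, $r(\Delta(f))=n$, $\delta(\Delta(f))=\tfrac{1}{2}n(n-1)$ and the bookkeeping for $(iv)$ all check out.
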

\medskip

\noindent
\textbf{Notes}\\
The formulae for the local invariants in terms of the Newton diagrams
and Newton transformations (Theorems \ref{Theorem 5_3}, \ref{Theorem 5_6}, \ref{Theorem 5_7}) 
are very close to Gwo\'{z}dziewicz' formulae \cite{Gw10} in toric geometry
of plane curve singularities (see also \cite{Oka96}) 
and like Newton trees and Newton process developed by Pi. Cassou-Nogu\`{e}s and Veys in \cite{CNVe12}
provide an effective method of calculations.
The Newton number $\mu(\Delta)$ can be defined for all Newton diagrams $\Delta$
in such a way that Kouchnirenko's theorem holds for any reduced power series
(see \cite{GaLenPl07}, \cite{Len08}, \cite{Wall99}.
Corollary 5.8 provides a new characterization of weakly Newton nondegenerate singularities
(see \cite{GrNg10}, Theorem 3.3).\\

\section{Nondegenerate singularities and equisingularity}
\label{sec: Nondegenerate singularities and equisingularity}

We will prove in this section that the equisingularity class of the curve $f=0$ can be recovered form the Newton diagram $\Delta(f)$
provided that $f$ is a convenient and nondegenerate power series.\\

\begin{Lemma}
\label{Lemma 6_1}
Let $f\in\mathbb{C}\{X,Y\}$ be a convenient power series such that the curve $f=0$ has exactly one quasi-tangent.
If its tangential multiplicity is equal to $1$ then $f$ is irreducible and 
$\Gamma(f)=i_0(f,X)\mathbb{N}+i_0(f,Y)\mathbb{N}$.\\
\end{Lemma}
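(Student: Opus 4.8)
The plan is to first read off the shape of $f$, then establish irreducibility, and finally compute the semigroup by a residue argument. Write $\tau\colon Y^n-aX^m=0$ with $\mbox{gcd}(n,m)=1$ for the unique quasi-tangent. Since its tangential multiplicity is $1$, Remark \ref{Remark 3_5} (applied with $d=1$) gives that $f=c(Y^n-aX^m)+\sum c_{\alpha\beta}X^\alpha Y^\beta$, summation over $\alpha n+\beta m>nm$, and that $i_0(f,X)=n$, $i_0(f,Y)=m$. For irreducibility I would invoke Lemma \ref{Lemma 4_3}: by Property \ref{Property 4_6}$(i)$ the strict transform $f_\tau$ has no constant term, while by Lemma \ref{Lemma 4_1} its tangential multiplicity $\mbox{ord}f_\tau(0,Y_\tau)=d=1$, so $1\leq\mbox{ord}f_\tau\leq\mbox{ord}f_\tau(0,Y_\tau)=1$ forces $\mbox{ord}f_\tau=1$; thus $f_\tau$ is nonsingular, hence irreducible, and Lemma \ref{Lemma 4_3} yields that $f$ is irreducible. (Alternatively one reads off the displayed form and quotes Corollary \ref{Corollary 4_4} directly.)

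The inclusion $i_0(f,X)\mathbb{N}+i_0(f,Y)\mathbb{N}\subseteq\Gamma(f)$ is then immediate. As $f$ is convenient, its branch is distinct from both axes, so $f$ divides neither $X$ nor $Y$; hence $n=i_0(f,X)$ and $m=i_0(f,Y)$ both belong to $\Gamma(f)$, and since $\Gamma(f)$ is closed under addition the entire subsemigroup $n\mathbb{N}+m\mathbb{N}$ lies in it.

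For the reverse inclusion, which is the heart of the matter, I would combine a good Puiseux parametrization with Weierstrass division. By the Weierstrass Preparation Theorem we may assume $f=Y^n+a_1(X)Y^{n-1}+\cdots+a_n(X)$ is distinguished of degree $n=i_0(f,X)$, and by Puiseux' Theorem $f(t^n,Y)=\prod_{\varepsilon^n=1}(Y-y(\varepsilon t))$ with $\mbox{ord}y(t)=i_0(f,Y)=m$ and $\mbox{gcd}(n,\mbox{supp}y(t))=1$, so that $(t^n,y(t))$ is a good parametrization of $f=0$; by the basic property $i_0(f,g)=\mbox{ord}g(t^n,y(t))$ of Section \ref{s1} we have $\Gamma(f)=\{\mbox{ord}g(t^n,y(t))\colon f\nmid g\}$. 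Given such a $g$, Weierstrass division by $f$ writes $g=qf+\rho$ with $\rho=\sum_{\beta=0}^{n-1}\rho_\beta(X)Y^\beta$ and $\rho_\beta\in\mathbb{C}\{X\}$, whence $g(t^n,y(t))=\sum_{\beta=0}^{n-1}\rho_\beta(t^n)y(t)^\beta$. The order of the $\beta$-th summand is $\mbox{ord}\rho_\beta(t^n)+\beta m\equiv\beta m\pmod{n}$, and since $\mbox{gcd}(m,n)=1$ these residues are pairwise distinct for $\beta=0,\ldots,n-1$. Consequently the nonzero summands have pairwise distinct orders, no cancellation can lower the minimal order, and $\mbox{ord}g(t^n,y(t))=\min_{\beta}(\mbox{ord}\rho_\beta(t^n)+\beta m)$, a value lying in $n\mathbb{N}+m\mathbb{N}$. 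This gives $\Gamma(f)\subseteq i_0(f,X)\mathbb{N}+i_0(f,Y)\mathbb{N}$, and together with the previous paragraph proves equality.

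I expect the last step to be the main obstacle. Everything turns on reducing $g$ modulo $f$ to $Y$-degree $<n$, so that only the exponents $\beta=0,\ldots,n-1$ occur, and then using $\mbox{gcd}(m,n)=1$ to separate the orders of the summands by their residue class modulo $n$. It is precisely this coprimality, guaranteed by the tangential multiplicity being $1$, that forbids cancellation and keeps the value semigroup as small as $n\mathbb{N}+m\mathbb{N}$; were the tangential multiplicity larger, the higher Puiseux terms of $y(t)$ would contribute further generators and the argument would break down.
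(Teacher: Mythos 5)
Your proposal is correct and follows essentially the same route as the paper: irreducibility via Corollary \ref{Corollary 4_4} (your Lemma \ref{Lemma 4_3} argument is just that corollary's proof unwound), and the semigroup computation by Weierstrass division to $Y$-degree $<n$ followed by separating the summands' orders into distinct residue classes modulo $n$, which is exactly the paper's use of Property $5$ of the intersection multiplicity. No gaps.
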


\begin{proof}
Let $m=i_0(f,Y)$, $n=i_0(f,X)$. 
Then $\mbox{gcd}(m,n)=1$ and after multiplying $f$ by a constant we may assume (see Remark \ref{Remark 3_5}) that
$f=Y^n+aX^m+\sum{c_{\alpha\beta}X^\alpha Y^\beta}$ where $a \ne 0$ and the summation is over $(\alpha,\beta)$ such that 
$\alpha n + \beta m>mn$.
The power series $f$ is irreducible by Corollary \ref{Corollary 4_4}.

To prove that $\Gamma(f)=\mathbb{N}n+\mathbb{N}m$ we follow \cite{Zar73} (proof of Theorem 3.9).
Consider the intersection number $i_0(f,g)$ where $g\in\mathbb{C}\{X,Y\}$ is not a multiple of $f$.
By the Weierstrass Division Theorem we may assume that 
$g=g_0(X)+g_1(X)+\cdots+g_{n-1}(X)Y^{n-1}\in \mathbb{C}\{X\}[Y]$.
We have $i_0(f,g_k(X)Y^{n-k})=(\mbox{ord}g_k)n+(n-k)m\equiv(n-k)m\pmod{n}$.
If $k,l<n$ and $k\ne l$ then $(k-l)m\not\equiv 0\pmod{n}$.
Thus $i_0(f,g_k(X)Y^{n-k})\ne i_0(f,g_l(X)Y^{n-l})$ for $k\ne l$ 
and by Property $5$ of intersection multiplicity (Section \ref{s1})
we get $i_0(f,g)=i_0(f,g_k(X)Y^{n-k})$ for a $k\in[0,n-1]$
which implies $\Gamma(f)\subset\mathbb{N}n+\mathbb{N}m$.
Since $n,m\in\Gamma(f)$ we have  $\Gamma(f)=\mathbb{N}n+\mathbb{N}m$.
\end{proof}
\medskip

\begin{Lemma}
\label{Lemma 6_2}
Let $f\in\mathbb{C}\{X,Y\}$ be a convenient, nondegenerate power series and let
$f=\prod_{i=1}^{r}{f_i}$ with $f_i\in\mathbb{C}\{X,Y\}$ irreducible.
For any $S\in\mbox{\Large{n}}(f)$ we put 
$I(S)=\{i\in[1,r]\colon \frac{i_0(f_i,Y)}{i_0(f_i,X)}=\frac{m_S}{n_S}\}$.
Then
\begin{enumerate}
\item[$(1)$] $\Gamma(f_i)=n_S\mathbb{N}+m_S\mathbb{N}$ \ for \ $i\in I(S)$,
\item[$(2)$] $i_0(f_i,f_j)=\inf\{m_Sn_T,m_Tn_S\}$ \ for \ $(i,j)\in I(S)\times I(T)$, $i\ne j$.
\end{enumerate}
\end{Lemma}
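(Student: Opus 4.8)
The plan is to reduce both assertions to facts already available for branches carrying a single quasi-tangent, after first reading off the Newton data of each component $f_i$.

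First I would pin down $\Delta(f_i)$ for $i\in I(S)$. Since $f$ is convenient, each of its branches differs from the axes, so $f_i$ is convenient; being irreducible, by Lemma \ref{Lemma 4_2} it has exactly one quasi-tangent, say $Y^{n_i}-a_iX^{m_i}=0$ with $\gcd(n_i,m_i)=1$. Because $f$ is nondegenerate, Proposition \ref{Proposition 4_8} guarantees that this quasi-tangent has tangential multiplicity $1$, whence by Remark \ref{Remark 3_5} the diagram $\Delta(f_i)$ is the elementary diagram with vertices $(0,n_i)$ and $(m_i,0)$, and $i_0(f_i,Y)=m_i$, $i_0(f_i,X)=n_i$. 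The defining condition $i_0(f_i,Y)/i_0(f_i,X)=m_S/n_S$ of $I(S)$, together with $\gcd(m_i,n_i)=\gcd(m_S,n_S)=1$, then forces $m_i=m_S$ and $n_i=n_S$. In particular $i_0(f_i,X)=n_S$ and $i_0(f_i,Y)=m_S$, and $\Delta(f_i)$ has vertices $(0,n_S)$ and $(m_S,0)$.

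For $(1)$ I would simply invoke Lemma \ref{Lemma 6_1} applied to $f_i$: it is convenient with a single quasi-tangent of tangential multiplicity $1$, so the lemma yields $\Gamma(f_i)=i_0(f_i,X)\mathbb{N}+i_0(f_i,Y)\mathbb{N}=n_S\mathbb{N}+m_S\mathbb{N}$, as claimed. For $(2)$, with $i\in I(S)$ and $j\in I(T)$ the diagrams $\Delta(f_i)$ and $\Delta(f_j)$ have vertices $\{(0,n_S),(m_S,0)\}$ and $\{(0,n_T),(m_T,0)\}$ respectively, so axiom (M3) gives $[\Delta(f_i),\Delta(f_j)]=\inf\{m_Sn_T,m_Tn_S\}$. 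Corollary \ref{Corollary 5_5} asserts $i_0(f_i,f_j)\geq[\Delta(f_i),\Delta(f_j)]$ with equality exactly when the pair $(f_i,f_j)$ is nondegenerate, i.e. when $f_i=0$ and $f_j=0$ have no common quasi-tangent. But $f$ is nondegenerate, so by Proposition \ref{Proposition 4_8} distinct branches have distinct quasi-tangents; since $i\neq j$, the pair $(f_i,f_j)$ is nondegenerate, equality holds, and $i_0(f_i,f_j)=\inf\{m_Sn_T,m_Tn_S\}$.

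The one point demanding care is the opening paragraph: extracting from nondegeneracy (via Proposition \ref{Proposition 4_8}) both that each branch's diagram collapses to the primitive diagram determined by $(m_S,n_S)$ — tangential multiplicity exactly $1$ — and that even two branches attached to the same face $S$ cannot share a quasi-tangent (they must differ in the constant $a_i$). Once these two consequences of nondegeneracy are in hand, $(1)$ and $(2)$ follow immediately from Lemma \ref{Lemma 6_1} and Corollary \ref{Corollary 5_5} together with (M3).
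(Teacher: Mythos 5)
Your proof is correct and follows essentially the same route as the paper: part $(1)$ via Proposition \ref{Proposition 4_8} and Lemma \ref{Lemma 6_1}, and part $(2)$ via the nondegeneracy of the pairs $(f_i,f_j)$ together with Corollary \ref{Corollary 5_5} and (M3). You merely spell out more explicitly the identification $i_0(f_i,X)=n_S$, $i_0(f_i,Y)=m_S$, which the paper disposes of in a parenthetical coprimality remark.
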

\medskip

\begin{proof}
By Proposition \ref{Proposition 4_8} the irreducible factors $f_i$, $i=1,\ldots,r$
satisfy the assumptions of Lemma \ref{Lemma 6_1}. 
Thus $\Gamma(f_i)=i_0(f_i,X)\mathbb{N}+i_0(f_i,Y)\mathbb{N}=n_S\mathbb{N}+m_S\mathbb{N}$
(we have $i_0(f_i,X)=m_S$, $i_0(f_i,Y)=n_S$ 
since $i_0(f,X)$, $i(f_i,X)$ and $m_S=|S|_1/r(S)$, $n_S=|S|_2/r(S)$ are coprime)
and we get $(1)$.
By Proposition \ref{Proposition 4_8} the pairs $f_i,f_j$, $i\ne j$ are nondegenerate,
therefore $(2)$ follows from Corollary \ref{Corollary 5_5}.
\end{proof}
\medskip

\begin{Remark}
\emph{
The quasi-tangent to the branches $f_i=0$, $i\in I(S)$ are exactly the quasi-tangents to the curve
$f=0$ corresponding to the face $S\in\mbox{\Large{\emph{n}}}(f)$.
Therefore $\#I(S)=r(S)$ and $\bigcup_{S\in\mbox{\large{\emph{n}}}(f)}{I(S)}=[1,r]$.}
\label{Remark 6_3}\\
\end{Remark}

\begin{Theorem}[see \cite{Len08}]
\label{Theorem 6_4}\ \\
Let $f,g\in\mathbb{C}\{X,Y\}$ be convenient power series such that $\Delta(f)=\Delta(g)$.
Then 
\begin{enumerate}
\item[$(i)$] if $f,g$ are nondegenerate then the curves $f=0$ and $g=0$ are equisingular,
\item[$(ii)$] if $f$ is nondegenerate but $g$ is degenerate then the curves  $f=0$ and $g=0$ are not equisingular.
\end{enumerate}
\end{Theorem}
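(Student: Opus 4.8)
The plan is to prove (i) by exhibiting an explicit equisingularity bijection, and to prove (ii) by separating $f$ and $g$ with the Milnor number, which is an equisingularity invariant by Theorem \ref{Theorem 2_16}.

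For (i), I would first record that a convenient nondegenerate power series is automatically reduced: by Proposition \ref{Proposition 4_8} every quasi-tangent to a branch has tangential multiplicity $1$ and distinct branches carry distinct quasi-tangents, so the irreducible factors are pairwise non-associated and the notion of equisingularity applies to both $f=0$ and $g=0$. Writing $f=f_1\cdots f_r$ and $g=g_1\cdots g_r$, Proposition \ref{Proposition 4_8} also gives $r=r_0(f)=r(\Delta(f))=r(\Delta(g))=r_0(g)$, so the two curves have the same number of branches. Since $\Delta(f)=\Delta(g)$, the two diagrams share the same compact faces (edges) $S$, and for each such $S$ the index sets $I_f(S)$, $I_g(S)$ obtained by applying Lemma \ref{Lemma 6_2} to $f$ and to $g$ partition $[1,r]$ and satisfy $\#I_f(S)=r(S)=\#I_g(S)$ by Remark \ref{Remark 6_3}.

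Next I would choose, for each edge $S$, an arbitrary bijection $I_f(S)\to I_g(S)$ and assemble these into a single permutation $\sigma$ of $[1,r]$, producing the candidate equisingularity map $f_i\mapsto g_{\sigma(i)}$. The key observation is that, by Lemma \ref{Lemma 6_2}, both invariants attached to the branches depend only on the edges of the common diagram and not on the individual branches. Indeed, for $i\in I_f(S)$ one has $\Gamma(f_i)=n_S\mathbb{N}+m_S\mathbb{N}=\Gamma(g_{\sigma(i)})$, since $\sigma(i)\in I_g(S)$; and for $i\in I_f(S)$, $j\in I_f(T)$ with $i\neq j$ one has $i_0(f_i,f_j)=\inf\{m_Sn_T,m_Tn_S\}=i_0(g_{\sigma(i)},g_{\sigma(j)})$, because $\sigma(i)\in I_g(S)$ and $\sigma(j)\in I_g(T)$. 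Both equisingularity conditions therefore hold for $\sigma$, so $f=0$ and $g=0$ are equisingular. The only real content here is Lemma \ref{Lemma 6_2}; once it is available, any edge-preserving bijection works, which is why no matching of individual branches is required.

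For (ii), I would argue through the Milnor number. Since $f$ is convenient and nondegenerate, the planar Kouchnirenko theorem (Corollary \ref{Corollary 5_9}) gives $\mu_0(f)=\mu(\Delta(f))$. Since $g$ is degenerate, the same corollary yields the strict inequality $\mu_0(g)>\mu(\Delta(g))$; if $g$ happens to have a multiple factor this reads $\mu_0(g)=+\infty$, which is harmless. Using $\Delta(f)=\Delta(g)$ we obtain $\mu_0(g)>\mu(\Delta(g))=\mu(\Delta(f))=\mu_0(f)$, so $\mu_0(f)\neq\mu_0(g)$. As the Milnor number is constant on equisingular curves by Theorem \ref{Theorem 2_16}, the curves $f=0$ and $g=0$ cannot be equisingular. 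I expect the main obstacle to be organizational rather than mathematical: in (i) one must ensure the chosen bijection is edge-preserving so that Lemma \ref{Lemma 6_2}(2) is applied with the correct pair of edges, and one should confirm at the outset that nondegeneracy forces reducedness; in (ii) the only delicate point is the possibility of a non-reduced $g$, which is absorbed by reading Corollary \ref{Corollary 5_9} with $\mu_0(g)=+\infty$.
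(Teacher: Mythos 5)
Your proposal is correct and follows essentially the same route as the paper: part (i) is the paper's argument with the labelling of branches made explicit via the edge-indexed sets $I(S)$ and Lemma \ref{Lemma 6_2}, and part (ii) is exactly the paper's comparison $\mu_0(f)=\mu(\Delta)<\mu_0(g)$ via Corollary \ref{Corollary 5_9} and Theorem \ref{Theorem 2_16}. Your added remarks on reducedness of nondegenerate convenient series and on the case $\mu_0(g)=+\infty$ are sound refinements of details the paper leaves implicit.
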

\medskip

\begin{proof}
Let $\Delta=\Delta(f)=\Delta(g)$ and $r=r(\Delta)$.\\
$(i)$ We have $r(f)=r(g)=r$ by Proposition \ref{Proposition 4_8}.
Moreover we can label the irreducible factors $f_i$ of $f$ and $g_i$ of $g$ ($i=1,\ldots, r$)
in such a way that
$$
\frac{i_0(f_i,Y)}{i_0(f_i,X)}=\frac{i_0(g_i,Y)}{i_0(g_i,X)} \quad \mbox{ for } \quad i=1,\ldots, r.
$$
Therefore $\Gamma(f_i)=\Gamma(g_i)$ for all $ i=1,\ldots, r$ and $i_0(f_i,f_j)=i_0(g_i,g_j)$ 
for $i,j\in\{1,\ldots, r\}$ by Lemma \ref{Lemma 6_2} that is $f_i\mapsto g_i$ is an equisingularity bijection 
and the curves $f=0$ and $g=0$ are equisingular.\\
$(ii)$ By Kouchnirenko's theorem (Corollary \ref{Corollary 5_9}) we have 
$\mu_0(f)=\mu(\Delta)$ and $\mu_0(g)>\mu(\Delta)$.
Therefore $\mu_0(f)\ne\mu_0(g)$ and the curves $f=0$ and $g=0$ are not equisingular by Theorem \ref{Theorem 2_16}.
\end{proof}
\medskip

\begin{Remark}
\emph{
We have proved that if $f$ and $g$ are nondegenerate then the equisingularity bijection $f_i\mapsto g_i$
preserves the intersection multiplicities of the branches with the axes:
$i_0(f_i,X)=i_0(g_i,X)$, $i_0(f_i,Y)=i_0(g_i,Y)$ for $i=1,\ldots, r$.
}\label{Remark 6_5}
\end{Remark}
\medskip

\begin{Remark}
\emph{
We can weaken the assumption "$f,g$ convenient" of Theorem \ref{Theorem 6_4} 
by assuming only that $f,g$ have no multiple factors. 
To prove this it suffices to use Theorem \ref{Theorem 6_4}  and Remark \ref{Remark 6_5}. 
}\label{Remark 6_6}
\end{Remark}
\medskip

\begin{Example}
\emph{
Let $\Delta\in\mathbb{R}^2_+$ be a convenient Newton diagram with vertices 
$(\alpha_0,\beta_0),\ldots, (\alpha_m,\beta_m)$ 
where $0=\alpha_0<\alpha_1<\cdots<\alpha_m$
and $\beta_0>\beta_1>\cdots>\beta_m=0$.\\
Then the series $f_0=X^{\alpha_0}Y^{\beta_0}+\cdots+X^{\alpha_m}Y^{\beta_m}$
is nondegenerate and $\Delta(f_0)=\Delta$.
}\label{Example 6_7}
\end{Example}
\medskip

Let us consider an invariant $I$ of equisingularity.
For every convenient Newton diagram $\Delta$ we put $I(\Delta)=I(\Delta(f))$
where $f$ is a nondegenerate power series.
According to Theorem \ref{Theorem 6_4} $I(\Delta)$ is  defined correctly (does not depend on $f$).
There is a natural problem: 
calculate $I(\Delta)$ effectively in terms of $\Delta$.
The most known result of this kind is due to Kouchnirenko, see \cite{Kou76} 
and Corollary \ref{Corollary 5_9} in this note.\\


\noindent
\textbf{Notes}\\
The nondegenerate plane curve singularities may be characterized without refering to the coordinates \cite{GaLenPl07}.
An unexpected example of degeneracy is discussed in \cite{Brz11}.
A lot of invariants of nondegenerate singularities  are computed in terms of their Newton diagrams:
see survey articles \cite{Pf80} and \cite{GwLenPl10}.
A description of the adjoints to the local nondegenerate hypersurface is given in \cite{MerTe77}.
The Newton diagrams and the notion of non-degeneracy are useful also in real analytic geometry \cite{Ku91}.

\bigskip
\bigskip

\bigskip
\bigskip

\noindent
Institut de Mat\'{e}matiques de Bordeaux,\\
Universit\'{e} Bordeaux I, 350, Cours de la Lib\'{e}ration,\\
33 405, Talence Cedex 05, France\\
e-mail: picassou@math.u-borrdeaux1.fr\\

\noindent
Department of Mathematics, Kielce University of Technology,\\
Al. 1000L PP 7, 25-314 Kielce, Poland\\
e-mail: matap@tu.kielce.pl


\begin{thebibliography}{GaLenPt07$^{bis}$}


\bibitem[Ber75]{Ber75}
D.~N.~Bernstein,
\emph{The number of roots of a system of equations,}
Funct. Anal. Appl., 9, No. 3 (1975), 183-185.










\bibitem[BriKn86]{BriKn86}
E.~Brieskorn, H.~Knoerr,  
\emph{Plane algebraic curves,}
Birkh\"{a}user, 1986.





\bibitem[Brz11]{Brz11}
Sz.~Brzostowski, 
\emph{Degenerate singularities and their Milnor numbers,}
Univ. Iagel. Acta Math. (to appear). 





\bibitem[Ca00]{Ca00}
E.~Casas-Alvero,
\emph{Singularities of plane curves,}
London Math. Soc. Lecture Notes 276, Cambridge University Press, 2000. 





















\bibitem[CNVe12]{CNVe12}
Pi.~Cassou-Nogu\`{e}s and W. Veys,
\emph{Newton trees for ideals in two variables and applications,}
arXiv: 1201.0467 v 1 [math.AG] 2 Jan 2012.










\bibitem[Du89]{Du89}
D.~Duval,
\emph{Rational Puiseux expansions,}
Composito Math. 70 (1989), 119-154.



















\bibitem[GaLenP\l{}07]{GaLenPl07}
E.~Garc\'{i}a~Barroso, A.~Lenarcik, A.~P\l{}oski, 
\emph{Characterization of non-degenerate plane curve singularities.}
Univ. Iagel. Acta Math. 45 (2007), 27-36. Erratum, ibidem. 47 (2009), 321-322. 






\bibitem[GrLoSh06]{GrLoSh06}
G.~M.~Greuel, Ch.~Lossen, E.~Shustin, 
\emph{Introduction to singularities and deformations,}
Springer Verlag, 2006. 






\bibitem[GrNg10]{GrNg10}
G.~M.~Greuel, Nguyen~Hong~Duc,
\emph{Some remarks on the planar Kouchnirenko's theorem,}
arXiv: 1009.4889 v 1 [math. AG] 24 Sep. 2010.







\bibitem[Gw10]{Gw10}
J.~Gwo\'{z}dziewicz, 
\emph{Generalized Noether's Formulas for plane curve singularities,}
Univ. Iagel. Acta Math. 48 (2010), 55-62.








\bibitem[GwLenP\l{}10]{GwLenPl10}
J.~Gwo\'{z}dziewicz, A.~Lenarcik, A.~P\l{}oski, 
\emph{Polar invariants of plane curve singularities: intersection theoretical approach.}
Demonstratio Math. Vol 43, N$^\circ$ 2 (2010), 303-323. 





\bibitem[GwP\l{}01]{GwPl01}
J.~Gwo\'{z}dziewicz, A.~P\l{}oski, 
\emph{Formulae for the singularities at infinity of plane algebraic curves.}
Univ. Iagel. Acta Math. 39 (2001),  109-133. 





\bibitem[Hi57]{Hi57}
H.~Hironaka, 
\emph{On the arithmetic genera and the effective genera of algebraic curves,}
Mem. Coll. Sci. Univ. Kyoto. Ser. A. Math. 30, (1957), 177-195. 






\bibitem[Ju23]{Ju23}
H.~W.~E.~Jung, 
\emph{Einf\"{u}hrung in die Theorie der algebraischen Funktionen einer Ver\"{a}nderlichen,}
Walter de Gruyter \& CO Berlin und Leipzig, 1923.






\bibitem[JoPf00]{JoPf00}
T.~de~Jong, G.~Pfister,
\emph{Local analytic geometry. Basic theory and applications.}
vieweg 2000. 









\bibitem[Kh77]{Kh77}
A.~G.~Khovanski, 
\emph{Newton polyhedra and toric varieties,}
Funkt. Anal. Prloz. 11 (4), 56-67 (1977) and 12 (1), 51-61 (1978). 






\bibitem[Kh79]{Kh79}
A.~G.~Khovanski, 
\emph{The index of polynomial vector field,}
(Russian),
Funkt. Anal. Prloz. 13  (1979) no. 1, 49-58. 










\bibitem[Kou76]{Kou76}
A.~G.~Kouchnirenko,  
\emph{Poly\`{e}dres de Newton et nombres de Milnor,}
Inventiones math. 32  (1976), 1-31. 





\bibitem[Kuo89]{Kuo89}
T.~C.~Kuo,  
\emph{Generalized Newton-Puiseux theory and Hensel's lemma in $\mathbb{C}[[x,y]]$,}
Canad. J. Math. 41  (1989), no. 6, 1101-1116. 




\bibitem[Ku91]{Ku91}
W.~Kucharz,  
\emph{Newton polygons and topological determinancy of analytic germs,}
Periodica Math. Hungarica vol. 22 (2)  (1991), 129-132. 













\bibitem[Len08]{Len08}
A.~Lenarcik,  
\emph{On the Jacobian Newton polygon of plane curve singularities,}
manuscripta math. 125 (2008), 309-324. 



























\bibitem[Lip88]{Lip88}
Lipovski,
\emph{Newton polyhedra and irreducibility,}
Math. Z. 199 (1988), 119-127.





\bibitem[Mau80]{Mau80}
J.~Maurer,
\emph{Puiseux expansion for space curves,}
manuscripta math. 32 (1980), 91-100.





\bibitem[Mas01]{Mas01}
M.~Masternak,
\emph{Invariants of singularities of polynomials in two complex variables and the Newton diagrams,}
Univ. Iagel. Acta Math. 39 (2001), 179-188.





\bibitem[MerTe77]{MerTe77}
M.~Merle, B.~Teissier,
\emph{Conditions d'adjonction d'apr\`{e}s du Val,}
S\'{e}minaire sur les Singularit\'{e}s des Surfaces,
Palaiseau, France 1976-77, Lecture Notes in Math. Springer, 777, 230-245.




\bibitem[Mil68]{Mil68}
J.~Milnor,
\emph{Singular Points of Complex Hypersurfaces,}
Princeton University Press, 1968.




\bibitem[Oka96]{Oka96}
M.~Oka,
\emph{Geometry of plane curves via toroidal resolution,}
Algebraic Geometry and Singularities (eds. A. Campillo L\'{o}pez and Norrv\'{a}ez Macarro), 
Progr. Math., 134, Birkh\"{a}user, Basel (1996), 95-121.













\bibitem[Pf80]{Pf80}
G.~Pfister,
\emph{Invariants of singularities and Newton polyhedron,}
Proceedings of the Week of Algebraic Geometry. Buckarest 1980, Teuber-Texte zur Mathematik, Baud 40, 123-138.





\bibitem[Ph73]{Ph73}
E.~Pham,
\emph{Courbes discriminantes des singularit\'{e}s planes d'ordre 3,}
Ast\'{e}risque 7 et 8 (1973), 363-391.













\bibitem[Ri71]{Ri71}
J.~J.~Risler,
\emph{Sur l'id\'{e}al jacobian d'une courbe planes,}
Bull. Soc. Math. Fr. 99 (4)  (1971), 305-311.




\bibitem[Te73]{Te73}
B.~Teissier,
Th\'{e}se, $2^\circ$ partie, Paris 1973.




\bibitem[Te73$^{bis}$]{Te73$^{bis}$}
B.~Teissier,
\emph{Cycles \'{e}vanescents, section planes et conditions de Whitney,}
Ant\'{e}risque No 7-8 (1973), 285-362.


\bibitem[Te76]{Te76}
B.~Teissier,
\emph{The hunting of invariants in the geometry of discriminants,}
Nordic Summer School/NAVF Symposium in Mathematics, Oslo, August 5-25, 1976.




\bibitem[Te91]{Te91}
B.~Teissier,
\emph{Introduction to Curve Singularities,}
Singularity Theory, Eitors D.~T.~L\^{e}, K. Saito, B. Teissier,
World Scientific 1991.




\bibitem[Va76]{Va76}
A.~N.~Varchenko,
\emph{Zeta function of monodromy and Newton's diagram,}
Inventiones math. 37 (1976), 253-262.




\bibitem[Wall99]{Wall99}
C.~T.~C.~Wall,
\emph{Newton polytopes and nondegeneracy,}
J. reine angew. Math. 509 (1999).





\bibitem[Wall04]{Wall04}
C.~T.~C.~Wall,
\emph{Singular Points of Plane Curves,}
London Mathematical Society, Student Text 63, Cambridge University Press 2004.




\bibitem[Waer39]{Waer39}
B.~L.~van~der~Waerden,
\emph{Einf\"{u}hrung in die algebraische Geometrie,}
Springer Berlin, 1939.







\bibitem[Zar73]{Zar73}
O.~Zariski,
Le probl\`{e}me des modules pour les branches planes.
Centre de Math\'{e}matiques de l'\'{E}cole Polytechnique, Paris 1973.


\end{thebibliography}
\end{document}